\newtheorem{theorem}{Theorem}[section]
\newtheorem{lemma}[theorem]{Lemma}
\newtheorem{corollary}[theorem]{Corollary}
\newtheorem{proposition}[theorem]{Proposition}
\newtheorem{conjecture}[theorem]{Conjecture}
\newtheorem{remark}[theorem]{Remark}
\def\Q{\mathbb{Q}}
\def\Z{\mathbb{Z}}
\def\F{\mathbb{F}}
\let\ds=\displaystyle
\let\sst=\scriptstyle
\let\ov=\overline
\let\wt=\widetilde
\let\wh=\widehat
\let\ft=\footnotesize
\def\No{{\rm N}}
\def\G{{\rm Gal}}
\def\Sha{{\rm III}}
\def\order{\raise1.5pt \hbox{${\scriptscriptstyle \#}$}}
\def\lien{\mathrel{\mkern-4mu}}
\def\too{\relbar\lien\rightarrow}
\def\tooo{\relbar\lien\relbar\lien\too}
\newcommand{\plus}{\ds\mathop{\raise 2.0pt \hbox{$\bigoplus $}}\limits}
\newcommand{\prd}{\ds\mathop{\raise 2.0pt \hbox{$  \prod   $}}\limits}
\newcommand{\sm}{\ds\mathop{\raise 2.0pt \hbox{$  \sum    $}}\limits}
\def\ffrac#1#2{\hbox{\footnotesize $\displaystyle \frac{#1}{#2}$}}
\newcommand{\limproj}{\mathop{\oalign{{\rm lim}\cr\hidewidth$\longleftarrow$\hidewidth\cr}}}
\begin{document}

\markboth{Georges Gras}{}

\title[Weber's class number problem and $p$--rationality]
{Weber's class number problem and $p$--rationality \\
in the cyclotomic $\wh \Z$--extension of $\Q$}

\author{Georges Gras}
\address{Villa la Gardette, 4 Chemin Ch\^ateau Gagni\`ere,
F-38520 Le Bourg d'Oisans}
\email{g.mn.gras@wanadoo.fr}

\begin{abstract} Let $K$ be the $N$th layer in the cyclotomic 
$\wh \Z$-extension $\wh \Q$. Many authors (Aoki, Fukuda, Horie, Ichimura, Inatomi, 
Komatsu, Miller, Morisawa, Nakajima, Okazaki, Washington,\,$\ldots$) 
analyse the behavior of the $p$-class groups ${\mathcal C}_K$. 
We revisit this problem, in a more conceptual form, since
computations show that the $p$-torsion group ${\mathcal T}_K$ 
of the Galois groups of the maximal abelian $p$-ramified 
pro-$p$-extension of $K$ (Tate--Shafarevich group of $K$) 
is often non-trivial; this raises questions since 
$\order {\mathcal T}_K = \order {\mathcal C}_K\, \order {\mathcal R}_K$
where ${\mathcal R}_K$ is the normalized $p$-adic regulator.
We give a new method testing ${\mathcal T}_K \ne 1$ (Theorem \ref{st*},
Table \ref{table}) and characterize the $p$-extensions $F$ of $K$ in $\wh \Q$ 
with ${\mathcal C}_F \ne 1$ (Theorem \ref{thmfond} and Corollary \ref{eight}). 
We publish easy to use programs, justifying again the eight known examples, 
and allowing further extensive computations.
\end{abstract}

\date{November 5,  2020}

\keywords{$p$-class groups, cyclotomic $\Z_\ell$-extensions,
class field theory, $p$-adic regulators, $p$-ramification theory, PARI/GP 
programs}
\subjclass{11R29, 11R37, 11Y40}

\maketitle

\tableofcontents

\section  {Introduction}
Let $\ell \geq 2$ be a prime number and let $\Q(\ell^n)$, $n \geq 0$, 
be the $n$th layer of the cyclotomic $\Z_\ell$-extension $\Q(\ell^\infty)$ of $\Q$ 
(with $[\Q(\ell^n) : \Q]=\ell^n$). We draw attention on the fact that we use
$\ell$ (instead of $p$ in the literature) since we need to apply the
$p$-ramification theory to the fields $\Q(\ell^n)$, $p \ne \ell$, which 
is more convenient for our presentation and bibliographic references.
The purpose of our study is to see in what circumstances the $p$-class group
of $\Q(\ell^n)$ (then of any composite $\Q(N)$ of such fields) is likely to be non-trivial.

\smallskip
Indeed, one may ask if the arithmetic of these fields is as 
smooth as it is conjectured (for the class group ${\mathcal C}_{\Q(N)}$) 
by many authors after many verifications and partial proofs \cite{AF,BPR,FK1,
FK2,FK22,FK3,FKM,Fu,Ho1,Ho2,Ho3,Ho4,Ich,IcNa,Ina,Mi1,Mi2,
Mor1,Mor2,Mor3,MorOk1,MorOk2,Wa}. 
The triviality of ${\mathcal C}_{\Q(\ell^n)}$ has, so far, no counterexamples 
as $\ell$, $n$, $p$ vary, but that of the Tate--Shafarevich group
${\mathcal T}_{\Q(\ell^n)}$ (or more generally ${\mathcal T}_{\Q(N)}$)
is, on the contrary, not true as we shall see 
numerically, and, for composite $N$, few ${\mathcal C}_{\Q(N)} \ne 1$
have been discovered.

\subsection{History of the main progress and results}
The computation of the class number have 
been done in few cases because of limitation of the order of magnitude of the 
degree $N$ and of $p$; for instance, the results given in \cite[Tables 1, 2]{Mi1,Mi2} 
only concern $\ell^n=2^7$, $3^4$, $5^2$, $11$, $13$, $17$, $19$, $23$, $29$, $31$
($2^7$, $3^4$, $29$, $31$ under GRH). From PARI/GP \cite{PARI}, in
a straightforward use, any 
computation needs the instruction ${\sf bnfinit(P)}$ (giving all the 
basic invariants of the field $K$ defined via the polynomial ${\sf P}$, 
whence the class group, a system of units, etc.); thus, by this way,
few values of $N$ may be carried out. 

\smallskip
Approaches, by means of geometry of numbers, prove
that some of these fields are euclidean (see, e.g., \cite{Ce} 
about $\Q(2^2)$, $\Q(2^3)$); but this more difficult and broad 
aspect, needs other techniques and is hopeless for our goal. 

\smallskip
Then some deep analytic studies of the class number were done 
by many authors (Aoki, Fukuda, Horie, Ichimura, Inatomi, Komatsu, Miller, 
Morisawa, Nakajima, Okazaki, Washington\,$\ldots$) proving infinitely many 
cases of $p$-principality, high enough in the towers.
New PARI functions, for abelian arithmetic, may be available
(see \cite{Fu1} for more information) and deal with classical
analytic formulas (cyclotomic units, Bernoulli numbers, etc.).

\subsection{Method and main results}
Let $K$ be any real abelian number field and let ${\mathcal T}_K$ be the 
torsion group of ${\mathcal G}_K := \G(H_K^{\rm pr}/K)$, where 
$H_K^{\rm pr}$ is the maximal abelian $p$-ramified (i.e., unramified outside 
$p$ and~$\infty$) pro-$p$-extension of $K$; this group is essentially the so-called 
Tate--Shafarevich group. 
The new aspects of our method is the use of $p$-adic measures which are 
more naturally attached to ${\mathcal T}_K$ or to the Jaulent logarithmic class 
groups $\wt {\mathcal C}_K$ related to Greenberg's conjecture.

\smallskip
Curiously, the computation of ${\mathcal T}_K$ is easier than that of 
${\mathcal C}_K$ or of the normalized $p$-adic regulator ${\mathcal R}_K$, 
separately. 

\smallskip
Let  $\wh \Q$ be the composite of the cyclotomic $\Z_\ell$-extensions 
of $\Q$. For $K=\Q(\ell^n)$, or more generally, for the subfields 
$\Q(\ell_1^{n_1}) \cdots \Q(\ell_t^{n_t}) \in \wh \Q$,
(denoted $\Q(N)$ where $N:=\ell_1^{n_1} \cdots \ell_t^{n_t}$), 
we have the identity:
$$\order {\mathcal T}_K = \order {\mathcal C}_K \cdot 
\order {\mathcal R}_K \cdot \order {\mathcal W}_K, $$
where ${\mathcal C}_K$ is the $p$-class group, ${\mathcal R}_K$ the 
normalized $p$-adic regulator, ${\mathcal W}_K=1$ for $p>2$ and 
${\mathcal W}_K \simeq \F_2^{\order S-1}$ for $p=2$, where 
$S := \{{\mathfrak p},\  {\mathfrak p} \mid 2\ \hbox{in $K$}\}$ 
(Lemma \ref{lemmaW}).
Since Leopoldt's conjecture holds in abelian fields, we have, for any prime~$p$,
${\mathcal G}_K = \Gamma_K \oplus {\mathcal T}_K$ with
$\Gamma_K \simeq \Z_p$. 

\smallskip
So, as soon as ${\mathcal T}_K=1$ (i.e., $K$ is called $p$-rational), 
we are certain that ${\mathcal C}_K = 1$; 
otherwise, we may suspect a possible counterexample. 
We shall first compute \S\,\ref{nontrivial} the structure of some 
${\mathcal T}_K$ by means of an indisputable reference program
(but using ${\sf bnfinit(P)}$) to show that this $p$-torsion group is non-trivial in 
many cases. The good new is that there exists a 
test for ${\mathcal T}_K \ne 1$ which does not need ${\sf bnfinit(P)}$ 
and allows large $K$ and $p$, so that our programs are very elementary and 
written with basic instructions giving simpler faster computations in larger intervals; 
it will be explained Sections \ref{annihilation}, \ref{slnc}, and yields Theorem \ref{st*}
and Table \ref{table}.

\smallskip
Finally, we give programs to search non-trivial 
$p$-class groups using Che\-valley's formula in $p$-extensions of $K$
in $\wh \Q$, in connection with a deep property of the $p$-adic regulator 
${\mathcal R}_K$, as product of the form ${\mathcal R}_K = 
{\mathcal R}_K^{\rm nr} \cdot {\mathcal R}_K^{\rm ram}$ 
(diagrams of \S\,\ref{diagram}), in relationship with studies 
of Taya \cite{Ta} on Greenberg's conjecture. We prove (Lemma \ref{totdec})
that, without restricting the generality, one may assume $p$ totally split in $K$,
then ${\mathcal R}_K^{\rm ram}=1$ (Lemma \ref{split}) and (Lemma \ref{ta}) 
that  ${\mathcal R}_K \ne 1$ is equivalent to ${\mathcal C}_{K_m} \ne 1$ for 
$K_m:=K\Q(p^m)$, $m$ large enough, hence the following fundamental 
criterion (Theorem \ref{thmfond}):

\smallskip\noindent
{\bf Main Theorem.} {\it Let  $\wh \Q$ be the composite of all the cyclotomic 
$\Z_\ell$-extensions of $\Q$ and let $K=\Q(N) \subset \wh \Q$. Let $p > 2$ 
be a prime, totally split in $K$. 
Then, ${\mathcal C}_{K \Q(p^m)} = 1$ for all $m \geq 0$, if and only if 
${\mathcal T}_K = 1$ (i.e., $K$ is $p$-rational). For $p=2$, the condition 
becomes ${\mathcal T}_K={\mathcal W}_K$ (i.e., $\order {\mathcal T}_K=2^{N-1}$)}.

\smallskip
Despite of the huge intervals tested, we only find again 
(see Corollary \ref{eight}) known cases (Fukuda--Komatsu--Horie--Morisawa), 
but with programs that may be used by anyone on more powerful computers 
than ours. This suggests an extreme rarity of non-trivial class groups in $\wh \Q$.

\subsection{The $p$-torsion groups ${\mathcal T}_K$ in number theory}
\smallskip
These invariants were less (numerically) computed than 
class groups, which is unfortunate because they are of basic 
significance in Galois cohomology since for all number fields $K$ 
(under Leopoldt's conjecture), ${\mathcal T}_K$ is the dual of 
${\rm H}^2({\mathcal G}_K,\Z_p)$ \cite{Ng}, where ${\mathcal G}_K$ 
is the Galois group of the maximal $p$-ramified pro-$p$-extension of $K$
(ordinary sense); the freeness of ${\mathcal G}_K$ is then equivalent to 
${\mathcal T}_K=1$.
Then, after the pioneering works of Haberland--Koch--Neumann--Schmidt
and others, we have the local-global principle defining first and second 
Tate--Shafarevich groups in the framework of $S$-ramification 
when $S$ is the set of $p$-places of $K$ \cite[Theorem 3.74]{Ko}:
$$\Sha^i_K := {\rm Ker} \Big [{\rm H}^i({\mathcal G}_K,\F_p) \too
\plus_{v \in S}{\rm H}^i({\mathcal G}_{K_v},\F_p) \Big],\ i=1,2; $$ 
then $\Sha^1_K \simeq {\mathcal C}_K/cl_K(S)$ (the $S$-class group),
and $\Sha^2_K$ closely depends on
$V_K := \{\alpha \in K^\times, \, (\alpha) = {\mathfrak a}^p, \ \alpha \in
K_v^{\times p}, \, \forall v \in S\}$, via the exact sequence:
$$0 \to V_K/K^{\times p} \too {\rm H}^2({\mathcal G}_K,\F_p) \too
\plus_{v \in S}{\rm H}^2({\mathcal G}_{K_v},\F_p) \too \Z/p\Z\ ({\rm resp.}\ 0) \to 0,$$
if $\mu_p \subset K$ (resp. $\mu_p\not \subset K$). Finally, the link with the 
invariant ${\mathcal T}_K$ is given by the rank formula
${\rm rk}_p({\mathcal T}_K) = {\rm rk}_p(V_K/K^{\times p})
+ \sm_{v \in S} \delta_v - \delta_K$, where $\delta_v$ (resp. $\delta_K$)
is $1$ or $0$ according as $K_v$ (resp. $K$) contains $\mu_p$ or not 
\cite[Corollary III.4.2.3]{Gra0}, thus giving for $K \subset \wh \Q$ (real fields):
$${\rm rk}_p({\mathcal T}_K) = {\rm rk}_p(V_K/K^{\times p}) 
+ \delta_{p,2}\,(\order S - 1), $$
with an exceptional term, only when $p=2$.
Thus, ${\rm rk}_p(\Sha^2_K)$ is essentially ${\rm rk}_p({\mathcal T}_K)$.
For generalizations, with ramification and decomposition giving
Shafarevich formula, see \cite[II.5.4.1]{Gra0} as well as \cite{Jau0}, 
and for the reflection theorem on generalized class groups, see
\cite[II.5.4.5 and Theorem III.4.2]{Gra0}.

\smallskip
If one replaces the notion of $p$-ramification (in pro-$p$-extensions) by that 
of $\Sigma$-ramification (in pro-extensions), {\it for any set of places} $\Sigma$, 
the corresponding Tate--Shafarevich groups have some relations with the 
corresponding torsion groups ${\mathcal T}_{K,\Sigma}$, but with many open 
questions and interesting phenomena when no assumption is done (see, e.g., 
\cite{HMR1,HMR2} for an up to date story about them and for numerical examples).

\smallskip
When ${\mathcal T}_K=1$ under Leopoldt's conjecture
(freeness of ${\mathcal G}_K$), one speaks of 
$p$-rational field $K$; in this case, the Tate--Shafarevich groups are
trivial or obvious, which has deep consequences as shown for 
instance in \cite{BGKK} in relation with our conjectures in \cite{Gra4}
on the $p$-adic properties of the units. 
For more information on the story of abelian $p$-ramification and for that
of $p$-rationality, see \cite[Appendix~A]{Gra9} and its bibliography about the  
pioneering contributions: ${\rm K}$-theory approach \cite{Gra3},
$p$-infinitesimal approach \cite{Jau0}, cohomological/pro-$p$-group 
approach \cite{Mov,MovN}. All basic material about $p$-rationality
is overviewed in \cite[III.2, IV.3, IV.4.8]{Gra0}.

\smallskip
The orders and annihilations of the ${\mathcal T}_K$ are given by $p$-adic 
$L$-functions ``at $s=1$'', the two theories (arithmetic and analytic) being 
equivalent and given by suitable $p$-adic pseudo-measures
(see Theorem \ref{st*}).

\smallskip
All these principles on Tate--Shafarevich groups exist for the theory 
of elliptic curves and other contexts as the arithmetic of abelian
varieties over the composite of $\Z_\ell$-extensions of a fixed number 
field $F$, the case of $\wh \Q$ ($F=\Q$) being at the origin of a question of 
Coates \cite[Section 3]{Co2} on the possible triviality of the $C_{\Q(N)}$ in $\wh \Q$, 
or at least of their extreme rarity.\,\footnote{\,I warmly thank John Coates for 
sending me his conference paper (loc.cit.), not so easy to find for me, 
but which contains very useful numerical and bibliographical information.}

\subsection{The logarithmic class group and Greenberg's conjecture}
We shall also consider another $p$-adic invariant, the Jaulent's logarithmic 
class group $\wt {\mathcal C}_K$ \cite{Jau1} which governs
Greenberg's conjecture \cite{Gree} for totally real number fields $K$ 
(i.e., $\lambda = \mu = 0$ for the cyclotomic $\Z_p$-extension of $K$), 
the result being that Greenberg's conjecture holds if and only 
$\wt {\mathcal C}_K$ capitulates in $K_\infty := K \Q(p^\infty)$ \cite{Jau2}. 
Of course Greenberg's conjecture holds for $p = \ell$ in 
$\Q(\ell^\infty)$ for trivial reasons, but we have few information for the 
cyclotomic $\Z_p$-extensions of $K=\Q(N) \subset \wh \Q$.
As we shall see, in all attempts concerning subfields of $\wh \Q$,
Jaulent's logarithmic class group was trivial (but the instruction 
${\sf bnflog(P,p)}$ needs ${\sf bnfinit(P)}$ limiting the intervals).
See \S\,\ref{Clog} for more numerical information. In particular,
${\mathcal C}_K={\mathcal R}_K^{\rm nr}=1$ implies 
$\lambda = \mu = \nu = 0$ \cite[Theorem 5]{Gra10}.

\section{Abelian $p$-ramification theory}

Recall the context of abelian $p$-ramification theory when $K$ is any totally real 
number field (under Leopoldt's conjecture for $p$ in $K$).

\subsection{Main definitions and notations}

\smallskip
\begin{enumerate}\label{definitions}
 \item[$(a)$] Let $E_K$ be the group of $p$-principal global units 
$\varepsilon \equiv 1\!\! \pmod {\prod_{{\mathfrak p} \mid p} {\mathfrak p}}$ 
of~$K$. Let $U_K := \hbox{$\oplus_{{\mathfrak p} \mid p}$} 
U_{\mathfrak p}$ 
be the $\Z_p$-module of $p$-principal local units, where 
$U_{\mathfrak p}$ 
is the group of ${\mathfrak p}$-principal units of the 
${\mathfrak p}$-completion $K_{\mathfrak p}$ of~$K$. 
Let $\mu_K$ (resp. $\mu_{\mathfrak p}$) be the group of $p$th 
roots of unity of $K$ (resp. $K_{\mathfrak p}$).

\smallskip
\item[$(b)$] 
Let $\iota : \{x \in K^\times \otimes \Z_p, \, \hbox{$x$ prime to $p$}\} \to
U_K$ be the diagonal embedding. Let $\ov E_K = \iota(E_K \otimes \Z_p)$ be the closure of 
$\iota E_K$ in $U_K$ and let $H_K^{\rm nr}$ be the $p$-Hilbert class field of $K$; 
then we have $\G(H_K^{\rm pr}/H_K^{\rm nr}) \simeq U_K/\ov  E_K$.
The Leopoldt conjecture leads to the (not so trivial) exact sequence:
$$\hspace{0.4cm}1 \to {\mathcal W}_K \too  
{\rm tor}_{\Z_p}^{} \big(U_K \big /\ov E_K \big) 
 \mathop {\tooo}^{{\rm log}}  {\rm tor}_{\Z_p}^{}\big({\rm log}\big 
(U_K \big) \big / {\rm log} (\ov E_K) \big) \to 0, $$

where ${\mathcal W}_K := {\rm tor}_{\Z_p}(U_K)/\iota \mu_K = 
\big[ \oplus_{{\mathfrak p} \mid p} \mu_{\mathfrak p} \big] \big /\iota \mu_K$. 

\item[$(c)$] 
Let ${\mathcal C}_K \simeq \G(H_K^{\rm nr}/K)$ be the 
$p$-class group of $K$ and let $\wt {\mathcal C}_K$ be the logarithmic
class group, isomorphic to $\G(H_K^{\rm lc}/K_\infty)$, where
$H_K^{\rm lc} \subseteq H_K^{\rm pr}$ is the maximal abelian locally 
cyclotomic pro-$p$-extension of $K$.

\smallskip
\item[$(d)$] Let ${\mathcal R}_K := {\rm tor}_{\Z_p} 
({\rm log}(U_K)/{\rm log}(\ov E_K))$
be the normalized $p$-adic regulator \cite[\S\,5]{Gra7}; recall that
for $p\ne 2$, $\order{\mathcal R}_K = \frac{R_K}{p^{d-1}}$ and 
$\order{\mathcal R}_K = \frac{1}{2^{s_2-1}} \frac{R_K}{2^{d-1}}$ for
$p=2$, where $R_K$ is the classical $p$-adic regulator, $d = [K : \Q]$
and $s_2$ is the number of $2$-places in $K$ (see \cite[Appendix]{Co1}
giving the link of ${\mathcal R}_K$ with the residue of the $p$-adic 
zeta function of $K$).

\smallskip
\item[$(e)$] Let $K_\infty := K\Q(p^\infty)$ be the cyclotomic $\Z_p$-extension of $K$
and let $H_K^{\rm bp}$ (called the Bertrandias--Payan field) fixed by 
the subgroup ${\mathcal W}_K$ of ${\mathcal T}_K$; the field $H_K^{\rm bp}$
is the composite of all $p$-cyclic extensions of $K$ embeddable in $p$-cyclic 
extensions of arbitrary large degree.
\end{enumerate}

\subsection{The case of the fields $K= \Q(N)$}
In that case, $K_\infty \cap H_K^{\rm nr} = K$ and ${\mathcal W}_K$ is given as follows:

\begin{lemma}  \label{lemmaW}
One has ${\mathcal W}_K=1$ for 
$K= \Q(N)$, except for $p=2$ in which case, 
${\mathcal W}_K \simeq \F_2^{\order S-1}$ where $S$
is the set of primes ${\mathfrak p} \mid 2$ in $K$.
\end{lemma}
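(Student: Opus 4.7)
The plan is to compute $\mathcal{W}_K = \big[\oplus_{{\mathfrak p} \mid p}\, \mu_{\mathfrak p}\big]\big/\iota\, \mu_K$ directly from the definition in $(b)$, by identifying the $p$-power roots of unity in $K$ and in each completion $K_{\mathfrak p}$ at a prime above $p$. Since $K = \Q(N) \subset \wh \Q$ is totally real, $\mu_K = 1$ when $p > 2$ and $\mu_K = \{\pm 1\}$ when $p = 2$; the problem reduces to determining $\mu_{\mathfrak p}$ for each ${\mathfrak p} \mid p$.

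The key local statement is that $\zeta_p \notin K_{\mathfrak p}$ when $p > 2$, and $i = \zeta_4 \notin K_{\mathfrak p}$ when $p = 2$. For this I would first establish the embedding $K_{\mathfrak p} \subset \Q_p(\zeta_{p^\infty})^+ \cdot \Q_p^{\rm nr}$. This comes from the factorization $K = \Q(p^{n_j}) \cdot K'$, where $K' := \prod_{\ell_i \neq p} \Q(\ell_i^{n_i})$ is unramified at $p$ (so sits locally inside $\Q_p^{\rm nr}$), and $\Q(p^{n_j})$, trivial if $p \nmid N$, is a subfield of $\Q(\zeta_{p^{n_j+1}})^+$, embedding locally into $\Q_p(\zeta_{p^\infty})^+$. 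Because $\Q_p(\zeta_{p^\infty})^+/\Q_p$ is totally ramified and $\Q_p^{\rm nr}/\Q_p$ is unramified, the two are linearly disjoint, so the totally ramified subextensions of the composite are exactly the subfields of $\Q_p(\zeta_{p^\infty})^+$. For $p > 2$, $\Q_p(\zeta_p)/\Q_p$ is tamely totally ramified of degree $p-1$, but the tame part of $\Q_p(\zeta_{p^\infty})^+$ has only degree $(p-1)/2$, so $\zeta_p \notin \Q_p(\zeta_{p^\infty})^+$. For $p = 2$, the unique ramified quadratic subfield of $\Q_2(\zeta_{2^\infty})^+$ is $\Q_2(\sqrt{2}) \neq \Q_2(i)$. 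In both cases one concludes $\mu_{\mathfrak p} = 1$ (resp.\ $\mu_{\mathfrak p} = \{\pm 1\}$).

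Combining these, for $p > 2$, ${\rm tor}_{\Z_p}(U_K) = 1$, hence $\mathcal W_K = 1$. For $p = 2$, ${\rm tor}_{\Z_2}(U_K) = \oplus_{{\mathfrak p} \in S} \{\pm 1\} \simeq \F_2^{\order S}$, and the diagonal embedding $\iota\,\mu_K$ contributes a subgroup isomorphic to $\F_2$, yielding $\mathcal W_K \simeq \F_2^{\order S - 1}$. The only delicate point is the local non-containment of $\zeta_p$ or $i$, which relies on the explicit structure of the maximal totally ramified abelian pro-cyclotomic extension of $\Q_p$; once this is settled, the rest of the proof is bookkeeping with the short exact sequence in $(b)$.
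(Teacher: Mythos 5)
Your proposal is correct and follows essentially the same route as the paper: the paper's proof is exactly the observation that $K_{\mathfrak p}$ lies in the composite of the local cyclotomic $\Z_p$-extension with an unramified extension, so it cannot contain $\mu_p$ (resp. $\mu_4$), whence ${\rm tor}_{\Z_p}(U_K)=1$ for $p>2$ and $\simeq \F_2^{\order S}$ for $p=2$, and the quotient by the diagonal $\iota\mu_K$ gives the result. One sentence of yours is overstated: the totally ramified subextensions of $\Q_p(\zeta_{p^\infty})^+\cdot \Q_p^{\rm nr}$ are \emph{not} exactly the subfields of $\Q_p(\zeta_{p^\infty})^+$ (e.g.\ $\Q_2(\sqrt{10})\subset \Q_2(\sqrt{2},\sqrt{5})$ is totally ramified but not contained in $\Q_2(2^\infty)$); the correct and sufficient statement is that the inertia group of the composite is $\G(\Q_p(\zeta_{p^\infty})^+/\Q_p)\simeq \Z_p^\times/\{\pm 1\}$, which admits no quotient of order $p-1$ (resp.\ no quotient realizing $\G(\Q_2(i)/\Q_2)$ as inertia), so your tame-degree count still closes the argument.
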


\begin{proof}
For $p \ne 2$, $K_{\mathfrak p}$ does not contain 
$\mu_p$ since $\Q_p(\mu_p)/\Q_p$, of degree $p-1 >1$,
is totally ramified and not contained in $\Q_p(p^\infty)$; 
thus ${\mathcal W}_K=1$.
For $p=2$, $K_{\mathfrak p}$ does not contain $\mu_4$
and ${\rm tor}_{\Z_p}(U_K) \simeq \F_2^{\order S}$, thus 
${\mathcal W}_K \simeq \F_2^{\order S-1}$.
\end{proof}

The case $\order S >1$ is very rare and occurs when $2^{\ell-1} \equiv 1 \pmod {\ell^2}$
for some $\ell \mid N$, e.g., $\ell=2093$, $3511$ which are out 
of range of practical computations. Thus ${\mathcal W}_K$ is in general trivial.
If moreover ${\mathcal C}_K = 1$, ${\mathcal T}_K = {\mathcal R}_K$, 
which is not always trivial as we shall see, even if we have conjectured in \cite{Gra4} 
that, for any number field $K$, ${\mathcal T}_K=1$ for $p \gg 0$.

\subsection{Fixed points formulas}

Let $C_K$ be the whole class group of a number field $K$ 
(in the restricted or ordinary sense, which will be specified with 
the mentions ${}^{\rm res}$ or ${}^{\rm ord}$).

\smallskip
Chevalley's formula \cite[p. 406]{Che} for class groups $C_K^{\rm res}$ 
and $C_k^{\rm res}$, in any cyclic extension $K/k$ of number fields,
of Galois group $G$, is given, in whole generality, by
$\order (C_K^{\rm res})^G = 
 \ffrac{\order C_k^{\rm res} \cdot \prod_{\mathfrak l} e_{\mathfrak l}}
{[K : k] \cdot (E_k^{\rm pos} : E_k^{\rm pos} \cap \No_{K/k} (K^\times))}$, 
where $e_{\mathfrak l}$ is the ramification index in $K/k$ of the prime ideal 
${\mathfrak l}$ of $k$ and $E_k^{\rm pos}$ is the group of 
totally positive units of $k$. When $K/k$ is {\it totally ramified} at some prime 
ideal ${\mathfrak l}_0$, the formula becomes
$\order (C_K^{\rm res})^G  = \order C_k^{\rm res} \cdot 
\ffrac{\prod_{{\mathfrak l} \ne {\mathfrak l}_0}  e_{\mathfrak l}}
{ (E_k^{\rm pos} : E_k^{\rm pos} \cap \No_{K/k} (K^\times))}$
(product of two integers). 

Applied to $\Q(\ell^n)/\Q$ the formula gives $(C_{\Q(\ell^n)}^{\rm res})^G = 1$ 
since $\ell$ is the unique (totally) ramified prime and since $E_\Q^{\rm pos} = 1$.
So, for $p=\ell$, ${\mathcal C}_{\Q(\ell^n)}^{\rm res}=1$, a classical result. 
This fixed points formula is often attributed to Iwasawa (1956), Yokoi (1967)
or others, instead of Chevalley (1933) (more precisely Herbrand--Chevalley, 
the ``Herbrand quotient'' of the group of units of $K$ being the key for a 
general proof). 
The analogous ``fixed points formula'' for the {\it $\ell$-torsion group}
${\mathcal T}_{\Q(\ell^n)}$, in $\Q(\ell^n)/\Q$ gives also 
${\mathcal T}_{\Q(\ell^n)} = 1$
 (\cite[Theorem IV.3.3]{Gra0}, \cite[Proposition~6]{Gra3}, 
\cite[Appendix A.4.2]{Gra9}); which justifies once again the fact that the 
notation ${\mathcal T}$ always refers to a $p$-torsion group. 

\smallskip
Nevertheless,
Chevalley's formula is non-trivial in $p$-extensions $K_m/K$,
$K_m:= K \Q(p^m) \subset \wh \Q$, as soon as $p$ splits in part
in $K$, and gives rare counterexamples (see Section \ref{genus}).

\subsection{Galois action -- Relative submodules} \label{Gmodules}\label{algebra}
Let $p$ be a fixed prime. 
We recall some elementary principles for cyclic extensions
of degree prime to $p$, to apply them to the fields $\Q(N)$, $p \nmid N$.

\smallskip
Let $K=\Q(N)$ and $k=\Q(N')$, $N' \vert N$. The transfer maps 
${\mathcal T}_k \to {\mathcal T}_K$, ${\mathcal R}_k \to {\mathcal R}_K$,
${\mathcal C}_k \to {\mathcal C}_K$, 
are injective and the (arithmetic and algebraic) norms
${\mathcal T}_K\! \to {\mathcal T}_k$,
${\mathcal R}_K\!\to {\mathcal R}_k$,
${\mathcal C}_K \!\to {\mathcal C}_k$,
are surjective since $p \nmid N$. 
More generally, let $({\mathcal M}_{\Q(N)})_{N \geq 1}$ be a family of finite 
$\Z_p[G_N]$-modules, where $G_N = \G(\Q(N)/\Q)$,
provided with natural transfer and norm maps having the above properties 
when $N \not\equiv 0 \!\!\pmod p$ and let 
${\mathcal M}_{\Q(N)}^*$ be the kernel of all the norms 
$\nu_{{\Q(N)}/{\Q(N')}}$, $N' \vert N$, $N'\ne N$, so that
${\mathcal M}_{\Q(N)} \simeq 
\big(\sum_{N'} {\mathcal M}_{\Q(N')} \big)\oplus {\mathcal M}_{\Q(N)}^*$.

\smallskip
Since $G_N$ is cyclic of order $N$, 
the rational characters $\chi$ of $K$ are in one-to-one 
correspondence with the fields $k \subseteq K$; we shall denote 
by $\theta \mid \chi$ the irreducible $p$-adic characters;
each $\theta$ is above a character $\psi$ of 
degree $1$ and order a divisor of $N$. 

\smallskip
We have ${\mathcal M}_K = \oplus_{\chi} {\mathcal M}^{\chi}_K 
= \oplus_{\chi} {\mathcal M}^*_{k}
= \oplus_{\chi} \big[\oplus_{\theta \mid \chi} {\mathcal M}_{k}^{\theta} \big]$. 
Then ${\mathcal M}_K^*$ (or any of its component ${\mathcal M}_K^{\theta_N}$
for $\theta_N$ above $\psi_N$ of order $N$)
is a module over $\Z_p[\mu_N]$, hence isomorphic to 
a product of $\Z_p[\mu_N]$-modules of the form
$\Z_p[\mu_N]/{\mathfrak p}_N^e$, for
${\mathfrak p}_N \mid p$ in $\Q_p(\mu_N)$, $e \geq 1$, 
whose $p$-rank is a multiple of the residue degree $\rho_N^{}$ of $p$ in 
$\Q_p(\mu_N)/\Q_p$; thus $\rho_N^{} \to \infty$ as $N \to \infty$, which is 
considered ``incredible'' for arithmetic invariants, as class groups, for
totally real fields.

\smallskip
Indeed, interesting examples occur more easily when $p$ 
totally splits in $\Q(\mu_N)$ (i.e., $p \equiv 1\!\!\! \pmod N$) and
this ``explains'' the result of \cite{Ich} and \cite{IcNa} claiming that 
$\order C_{\Q(\ell^n)}$ is odd in $\Q(\ell^\infty)$ for all $\ell < 500$, that of 
\cite{Ho4,MorOk1,MorOk2} and explicit deep analytic computations in 
\cite{BPR,FK1,FK2,FKM,Ho3,Ho4,Ich,IcNa,Mor1,Mor2,MorOk1,MorOk2,Wa} 
(e.g., Washington's theorem \cite{Wa} claiming that for $\ell$ and $p$ fixed, 
$\order {\mathcal C}_K$ is constant for all $n$ large enough, whence 
${\mathcal C}_K^*=1$ for all $n \gg 0$, then \cite[Theorems 2, 3, 4, Corollary 1]{FKM}); 
mention also the numerous pioneering Horie's papers proving results of the form: 
``let $\ell_0$ be a small prime; then a prime $p$, totally inert in 
some $\Q(\ell_0^{n_0})$, yields ${\mathcal C}_{\Q(\ell_0^n)}=1$ for all $n$''.
In \cite{BPR}, a conjecture (from ``speculative extensions of 
the Cohen--Lenstra--Martinet heuristics'') implies ${\mathcal C}_{\Q(\ell^n)}^* \ne 1$
for finitely many layers (possibly none). 

\smallskip
Concerning the torsion groups ${\mathcal T}_K$, 
$K=\Q(\ell^n)$, we observe that in general the solutions $p$, for 
$\order {\mathcal T}_K^* \equiv 0 \pmod p$, also
fulfill $p \equiv 1 \pmod {\ell^n}$, which is in some sense
a strong form of Washington's result because the reflection theorem
that we shall recall later in Section \ref{rt}, in
$L:=K(\mu_p)$, the $p$-rank of ${\mathcal T}_K^*$
is bounded by that of a suitable component of ${\mathcal C}_L^*$.
Thus Washington's theorem may be true for the torsion groups in $K$.

\smallskip
One can wonder what happens for the normalized
regulators ${\mathcal R}_K$ and
the relative components ${\mathcal R}_K^*$,
due to the specific nature of a regulator as a Frobenius determinant
and regarding the previous observations.
So, recall some algebraic facts about the ${\mathcal R}_K^*$
that we can explain from heuristics and probabilistic studies given 
in \cite[\S\,4.2.2]{Gra4}. For any real Galois extension $K/\Q$, of Galois 
group $G$, the normalized $p$-adic regulator ${\mathcal R}_K$ may be 
defined via the conjugates of the $p$-adic logarithm of a suitable Minkowski 
unit $\eta$ and can be written, regarding $G$, as Frobenius determinant
${\rm R}_p^G(\eta) = \prd_{\theta} {\rm R}_p^{\theta}(\eta)$,
where $\theta$ runs trough the irreducible $p$-adic characters, and
${\rm R}_p^{\theta}(\eta) = \prd_{\psi \mid \theta} {\rm R}_p^{\psi}(\eta)$ 
with absolutely irreducible characters~$\psi$. Then, in a standard point of view,
${\rm Prob\,} \big({\mathcal R}_K^\theta \equiv 0 \!\! \pmod p \big)
= \ffrac{O(1)}{p^{\,\rho\,\delta^2}}$ (loc. cit.), where $\rho$ is still 
the residue degree of $p$ in the field of values of $\psi$
and $\delta \geq 1$ is a suitable multiplicity of the absolutely irreducible
$\theta$-representation (in our case, $\rho=\rho_{\ell^n}^{}$ and $\delta = 1$). 

\smallskip
Contrary to the class group of $K$ (for $K$ fixed) 
which is {\it finite}, the primes $p$ such that 
${\mathcal R}_K \equiv 0 \! \pmod p$ may be, a priori, infinite 
in number (we have conjectured that it is not the case, but this is 
an out of reach conjecture). Nevertheless, some very large  
$p$ with $\rho=1$, may divide $\order {\mathcal R}_K^{\theta}$, which 
indicates other probabilities conjectured in \cite[Th\'eor\`eme 1.1]{Gra4}.
This analysis also confirms that, for $\ell$ and $p$ fixed, 
${\mathcal T}_{\Q(\ell^n)}$ may be constant for all $n$ large enough.

\smallskip
We have computed the order $\order \wt {\mathcal C}_K$ of the logarithmic class groups
(from \cite{BJ}), and we have no non-trivial example; this means that the 
logarithmic class group behaves, in some sense, as the ordinary $p$-class 
group in $\Q(\ell^\infty)$ or in any $K=\Q(N)$, but not as ${\mathcal T}_K$, 
as we have seen. This is not too surprising since if ${\mathcal C}_K=1$
and if $p$ is totally inert in $K$, then  $\wt {\mathcal C}_K=1$ 
(see \cite[Sch\'ema \S\,2.3]{Jau2} or \cite[Diagram 4.2]{Gra10}).

\subsection{Computation of the structure of ${\mathcal T}_K$ for $K=\Q(\ell^n)$}
\label{nontrivial} \label{T1}
The following PARI/GP programs give the structure, of abelian group, 
of ${\mathcal T}_{\Q(\ell^n)}$, from the polynomial ${\sf P}$
defining $\Q(\ell^n)$:
${\sf P= polsubcyclo(el^{n+1},el^n)}$ for $p>2$
and: ${\sf P=x;for(j=1,n,P=P^2-2)}$, for $p=2$.
These programs are the simplified form of the following general 
one written in \cite[Program I, \S\,3.2]{Gra8}, for 
{\it any monic irreducible polynomial in $\Z[x]$}, where
${\sf r}$ is the number of independent $\Z_p$-extensions:\par
\footnotesize
\begin{verbatim}
PROGRAM I. FOR ANY NUMBER FIELD AND ANY p
{P=x^3-7*x+1;K=bnfinit(P,1);b=2;B=10^5;r=K.sign[2]+1;forprime(p=b,B,Ex=12;
KpEx=bnrinit(K,p^Ex);HpEx=KpEx.cyc;L=List;e=matsize(HpEx)[2];R=0;
for(k=1,e-r,c=HpEx[e-k+1];w=valuation(c,p);if(w>0,R=R+1;
listinsert(L,p^w,1)));if(R>0,print("p=",p," rk(T_p)=",R," T_p=",L)))}
p=7   rk(T_p)=1  T_p=List([7])       p=701 rk(T_p)=1  T_p=List([701])
\end{verbatim}
\normalsize

The parameter ${\sf Ex}$ must be such that ${\sf p^{Ex}}$ is larger 
than the exponent of ${\mathcal T}_K$;
taking ${\sf Ex=2}$ for $p>2$ (resp. ${\sf Ex=3}$ for $p=2$) 
gives the $p$-rank of ${\mathcal T}_K$.\par
\footnotesize
\begin{verbatim}
PROGRAM II. STRUCTURE OF T_K, K=Q(el^n), FOR ANY el, n, p<Bp
{el=2;n=3;Bp=2*10^5;if(el==2,P=x;for(j=1,n,P=P^2-2));if(el!=2,
P=polsubcyclo(el^(n+1),el^n));Ex=6;K=bnfinit(P,1);forprime(p=2,Bp,
KpEx=bnrinit(K,p^Ex);HpEx=KpEx.cyc;L=List;e=matsize(HpEx)[2];R=0;
for(k=1,e-1,c=HpEx[e-k+1];w=valuation(c,p);
if(w>0,R=R+1;listinsert(L,p^w,1)));
if(R>0,print("el=",el," n=",n," p=",p," rk(T)=",R," T=",L)))}

el=2 n=1 p=13  rk(T)=1 T=[13]          el=2 n=3 p=29  rk(T)=1 T=[29] 
el=2 n=1 p=31  rk(T)=1 T=[31]          el=2 n=3 p=521 rk(T)=1 T=[521]
el=2 n=2 p=13  rk(T)=2 T=[169,13]      el=3 n=1 p=7   rk(T)=1 T=[7] 
el=2 n=2 p=31  rk(T)=1 T=[31]          el=3 n=1 p=73  rk(T)=1 T=[73]
el=2 n=2 p=29  rk(T)=1 T=[29]          el=3 n=2 p=7   rk(T)=1 T=[7]         
el=2 n=2 p=37  rk(T)=1 T=[37]          el=3 n=2 p=73  rk(T)=1 T=[73]
el=2 n=3 p=3   rk(T)=2 T=[3,3]         el=5 n=1 p=11  rk(T)=2 T=[11,11]
el=2 n=3 p=31  rk(T)=1 T=[31]          el=5 n=2 p=11  rk(T)=2 T=[11,11]    
el=2 n=3 p=13  rk(T)=2 T=[169,13]      el=5 n=2 p=101 rk(T)=1 T=[101]
el=2 n=3 p=37  rk(T)=1 T=[37]
\end{verbatim}
\normalsize

\begin{remark}\label{T13}
These partial results show that $p$-ramification aspects are 
more intricate since, for instance for $\ell=2$, the divisibility
by $p=29$ only appears for $n=2$ and, for $p = 13$, the $13$-rank and
the exponent increase from $n=1$ to $n=2$.

Unfortunately, it is not possible in practice to compute beyond 
$\ell=17$ with the instruction ${\sf bnfinit}$. 
So, as we have explained in the Introduction, we shall give Section 
\ref{annihilation} another method to test ${\mathcal T}_{\Q(N)} \ne 1$ 
for larger $N$ and $p$; this algorithm only uses elementary basic 
instructions and does not need any large computer memory contrary to 
${\sf bnfinit}$ (see Table \ref{table}). 
\end{remark}

\section{Definition of $p$-adic measures} \label{annihilation}

We recall the main classical principles to apply them to the fields $\Q(N)$, 
with any prime $p \geq 2$, $p \nmid N$.

\subsection{General definition of the Stickelberger elements}
Let $f  > 1$ be any abelian conductor and let $\Q(\mu_f)$ 
be the corresponding cyclotomic field.

We define ${\mathcal S}_{\Q(\mu_f)} :=-\sm_{a=1}^{f} 
\Big(\ffrac{a}{f}-\ffrac{1}{2} \Big) \cdot \Big(\ffrac{\Q(\mu_f)}{a} \Big)^{-1}$
(where the integers $a$ are prime to $f$ and
where Artin symbols are taken over $\Q$).

\smallskip
The properties of annihilation need to multiply ${\mathcal S}_{\Q(\mu_f)}$ 
by a multiplier $1 - c \cdot \big(\frac{\,\Q(\mu_f)}{c} \big)^{-1}$, 
for any odd $c$ prime to $f$; this shall give integral elements in $\Z[\G(\Q(\mu_f)/\Q)]$. 
If $p \cdot f$ is odd, one may take $c=2$ for annihilation in the $\Z_p$-algebra considered
since $\frac{1}{2} \sum_{a=1}^{f} \big(\frac{\Q(\mu_f)}{a} \big)^{-1}$ can be 
neglected at the end.

\smallskip
Put $q=p$ (resp. $4$) if $p \ne 2$ (resp. $p=2$). For $K=\Q(N)$, 
let $L=K(\mu_q)$; to simplify, put 
$K_m:=K \Q(p^m)$, $L_m:= K_m L = K(\mu_{q p^{m}})$ for all $m \geq 0$; 
so $\cup_m K_m = K_\infty$ and $\cup_m L_m = L_\infty$.
All is summarized by the following diagram where 
$G_m \simeq \Z/N \Z \times \Z/p^m\Z \times \Z/\phi(q) \Z$,
$\phi$ being the Euler function:

\unitlength=0.95cm
$$\vbox{\hbox{\hspace{-4.0cm}  
\begin{picture}(11.5,4.25)
%     horizontales
\put(6.5,4.0){\line(1,0){3.1}}
\put(6.5,2.50){\line(1,0){3.2}}
\put(7.3,0.50){\line(1,0){2.45}}
\put(3.8,0.50){\line(1,0){1.6}}
\put(4.2,2.50){\line(1,0){1.5}}
%     verticales
\put(6.00,2.8){\line(0,1){0.9}}
\put(6.00,0.8){\line(0,1){1.4}}
\put(10.00,2.8){\line(0,1){0.9}}
\put(10.00,0.8){\line(0,1){1.4}}
\bezier{450}(3.8,0.6)(8.4,1.2)(9.85,2.3)
\put(8.0,1.2){$\sst  G_m$}
\bezier{250}(3.8,0.25)(4.8,0.1)(5.8,0.25)%%
\put(4.7,-0.05){$\sst G$}
\put(9.7,3.9){$L_\infty\!=\!K_\infty L$}
\put(5.8,3.9){$K_\infty$}
\put(5.8,2.4){$K_m$}
\put(3.0,2.4){$\Q(p^m)$}
\put(3.56,0.8){\line(0,1){1.4}}
\put(9.8,2.4){$L_m\!=\!K_m L$}
\put(9.86,0.4){$L\!=\!K(\mu_q)$}
\put(5.45,0.4){$K\!=\! \Q(N)$}
\put(3.4,0.4){$\Q$}
\put(4.45,0.25){$\sst N$}
\put(3.1,1.45){$\sst p^m$}
\put(7.5,0.25){$\sst \phi(q) = p-1\, or \, \sst 2$}
\end{picture}   }} $$

\subsection{Multipliers of the Stickelberger elements}\label{conductor}
Let $f_N$ be the conductor of $K=\Q(N)$, $N=\ell_1^{n_1}\cdots \ell_t^{n_t}$.
We have $f_N=\ell_1^{n_1+1}\cdots \ell_t^{n_t+1}$ if $N$ is odd
and $f_N=2^{n_1+2}\cdot  \ell_2^{n_2+1}\cdots \ell_t^{n_t+1}$ if $N$ 
is even.
The conductor of $L_m$ is $f_{L_m} = q p^m \cdot f_N$ for $2 \nmid N$ and 
$p^{m+1} \cdot f_N$ otherwise. 

\smallskip
Put $f_{L_m} =: f_N^m$ and let $c$ be prime to $f_N^m$ and, by restriction of
${\mathcal S}_{\Q(\mu_{f_N^m})}$ to $L_m$, let
${\mathcal S}_{L_m}^c :=\ds \Big(1 - c \,\Big(\ffrac{L_m}{c} \Big)^{-1} \Big)\cdot 
{\mathcal S}_{L_m}$;
then ${\mathcal S}_{L_m}^c \in \Z[{G_m}]$. Indeed:
$${\mathcal S}_{L_m}^c=\ffrac{-1}{f_N^m} \sm_a 
\Big[a\, \Big(\ffrac{L_m}{a} \Big)^{-1} - a c \,\Big(\ffrac{L_m}{a} \Big)^{-1}
\Big(\ffrac{L_m}{c} \Big)^{-1}\Big]
+ \ffrac{1-c}{2} \sm_a \Big(\ffrac{L_m}{a} \Big)^{-1}; $$
let $a'_{c} \in [1, f_N^m]$ be the unique integer such that 
$a'_{c} \cdot c \equiv a \pmod {f_N^m}$ and put 
$a'_{c} \cdot  c = a + \lambda^m_a(c) f_N^m$, 
$\lambda^m_a(c) \in \Z$; using the bijection $a \mapsto a'_{c}$ 
in the summation of the second term in $\big [\ \big]$ and 
$\ds\Big(\ffrac{L_m}{a'_{c}} \Big) \Big(\ffrac{L_m}{c} \Big) = \Big(\ffrac{L_m}{a} \Big)$,
this yields:
\begin{eqnarray*} 
{\mathcal S}_{L_m}^c  
&=&\ffrac{-1}{f_N^m}  \Big[ \sm_a a \, \Big(\ffrac{L_m}{a} \Big)^{-1}\!\! - 
\sm_a a'_{c} \cdot c \,\Big(\ffrac{L_m}{a'_{c}} \Big)^{-1} \! \Big(\ffrac{L_m}{c} \Big)^{-1}\Big] 
 + \ffrac{1-c}{2} \sm_a \Big(\ffrac{L_m}{a} \Big)^{-1} \\
& =& \ffrac{-1}{f_N^m} \sm_a \Big [a - a'_{c} \cdot  c \Big] \Big(\ffrac{L_m}{a} \Big)^{-1}
+ \ffrac{1-c}{2}\sm_a \Big(\ffrac{L_m}{a} \Big)^{-1} \\
& =&  \sm_a \Big[\lambda^m_a(c)+ \ffrac{1-c}{2} \Big]
 \Big(\ffrac{L_m}{a} \Big)^{-1} \in \Z[{G_m}].
 \end{eqnarray*}
 
\begin{lemma}\label{sprime} We have the relations
$\lambda^m_{f_N^m-a}(c) + \frac{1-c}{2} = -\big(\lambda^m_{a}(c) + \frac{1-c}{2} \big)$
for all $a \in [1, f_N^m]$ prime to $f_N^m$. Then
${\mathcal S}'^c_{L_m} :=  \sum_{a=1}^{f_N^m/2} \big[\lambda^m_a(c)+ \frac{1-c}{2} \big]
\big(\frac{L_m}{a} \big)^{-1} \in \Z[{G_m}]$ is such that
${\mathcal S}_{L_m}^c = {\mathcal S}'^c_{L_m} \cdot (1-s_\infty)$.
\end{lemma}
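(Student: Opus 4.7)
The plan is to prove Lemma~\ref{sprime} by treating the two assertions in sequence: first the pointwise antisymmetry identity on the multipliers $\lambda^m_a(c)$, and then the factorization ${\mathcal S}_{L_m}^c = {\mathcal S}'^c_{L_m} \cdot (1-s_\infty)$, which will follow by pairing terms in the sum.

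First I would verify the identity $\lambda^m_{f_N^m-a}(c) + \lambda^m_a(c) = c - 1$, equivalent after rearranging to the stated antisymmetry around $\tfrac{1-c}{2}$. The key observation is that since $a$ is prime to $f_N^m$, the representative $a'_c \in [1, f_N^m]$ never equals $f_N^m$, so $f_N^m - a'_c$ lies in $[1, f_N^m-1]$ and satisfies $c(f_N^m - a'_c) \equiv -a \equiv f_N^m - a \pmod{f_N^m}$. By uniqueness of the representative for $b := f_N^m - a$, this forces $b'_c = f_N^m - a'_c$. Expanding $c \cdot b'_c$ using $c \cdot a'_c = a + \lambda^m_a(c)\, f_N^m$ would then yield $\lambda^m_b(c) = c - 1 - \lambda^m_a(c)$, as required.

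For the factorization, I would pair the summands $a$ and $f_N^m - a$ in the expression for ${\mathcal S}_{L_m}^c$ derived at the end of \S\ref{conductor}. The involution $a \mapsto f_N^m - a$ is fixed-point free on the residues coprime to $f_N^m$ (the only potential fixed point $f_N^m/2$ being excluded by coprimality), partitioning the domain into the two halves $a \le f_N^m/2$ and $a \ge f_N^m/2$. Since $L_m$ is CM (it contains $\mu_q$ with $\phi(q) \ge 2$), the Artin symbol of $-1 \pmod{f_N^m}$ restricts on $L_m$ to complex conjugation $s_\infty$, giving $\bigl(\tfrac{L_m}{f_N^m - a}\bigr)^{-1} = s_\infty \cdot \bigl(\tfrac{L_m}{a}\bigr)^{-1}$. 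Combining this with the antisymmetry from the first step, each pair of terms would collect a common factor $(1 - s_\infty)$, collapsing the total sum to $\mathcal{S}'^c_{L_m}(1 - s_\infty)$.

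There is no real obstacle: both claims reduce to an elementary modular calculation together with the identification of the CM-involution on $L_m$ with the Artin symbol of $-1$. The only points requiring care are the range of $a'_c$ and the fixed-point freeness of the involution on coprime residues, both of which follow immediately from $\gcd(a, f_N^m) = 1$.
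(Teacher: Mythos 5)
Your proposal is correct and follows essentially the same route as the paper: you establish $(f_N^m-a)'_c = f_N^m - a'_c$ from the uniqueness of the representative, deduce $\lambda^m_{f_N^m-a}(c) = c-1-\lambda^m_a(c)$, and then pair $a$ with $f_N^m-a$ using that the Artin symbol of $-1$ restricts to $s_\infty$ on $L_m$. The only difference is that you spell out the pairing argument and the fixed-point-freeness of $a \mapsto f_N^m-a$, which the paper leaves implicit in the phrase ``and the result.''
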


\begin{proof} By definition, $(f_N^m-a)'_c$ is in $[1, f_N^m]$ and 
congruent modulo $f_N^m$ to
$(f_N^m-a) \,c^{-1} \equiv - a c^{-1} \equiv - a'_c \pmod {f_N^m}$; thus
$(f_N^m-a)'_c =f_N^m-  a'_c$ and 
$\lambda^m_{f_N^m-a}(c) =\frac{(f_N^m-a)'_c \,c- (f_N^m-a)}{f_N^m}
= \frac{(f_N^m-a'_c)\, c- (f_N^m-a)}{f_N^m} = c-1- \lambda^m_a(c)$, 
whence $\lambda^m_{f_N^m-a}(c) + \frac{1-c}{2} =
 - \big( \lambda^m_{a}(c) + \frac{1-c}{2}  \big)$ and the result.
\end{proof}

\subsection{Spiegel involution}
Let $\kappa_m : {G_m} \to (\Z/q p^m\Z)^\times \simeq 
\G(\Q(\mu_{q p^{m}}^{})/\Q)$
be the {\it cyclotomic character of level $m$},
of kernel $\G(L_m/\Q(\mu_{q p^{m}}^{}))$, defined by
$\zeta^s = \zeta^{\kappa_m(s)}$, for all $s \in {G_m}$ 
and all $\zeta \in \mu_{q p^m}$.
The Spiegel involution is the involution of $(\Z/q p^m \Z) [{G_m}]$ defined by
$x :=\sm_{s \in {G_m}} a_s \cdot s \longmapsto
\ x^* := \sm_{s \in {G_m}} a_s\cdot \kappa_m(s)\cdot s^{-1}$.

Thus, if $s$ is the Artin symbol $\big(\frac{L_m}{a} \big)$, then 
$\big(\frac{L_m}{a} \big)^{\! *} \equiv a\cdot \big(\frac{L_m}{a} \big)^{-1}\!\!
\pmod {qp^m}$.
From Lemma \ref{sprime}, we obtain
${\mathcal S}_{L_m}^{c\,*} = {\mathcal S}'^{c\,*}_{L_m} \cdot (1+s_\infty)$
in $(\Z/q p^m \Z) [{G_m}]$.

\smallskip
We shall use the case $m=0$ for which we have 
$\kappa_m(s) \equiv \omega(s) \pmod q$, where $\omega$ is
the usual Teichm\"uller character $\omega : G_0=\G(L/\Q) \to \Z_p^\times$.

\section{Annihilation theorem of ${\mathcal T}_K^*$} \label{slnc}
Recall that, for $K=\Q(N)$ we put $K_m := K\Q(p^m)$ and $L_m := K_m L$, 
where $L=K(\mu_q)$, $q=p$ or $4$.
For {\it the most precise and straightforward method}, the principle, 
which was given in the 60's and 70's, is to consider the annihilation, by means 
of the above Stickelberger twist, of the kummer radical in $L_m^\times$ 
defining the maximal sub-extension of $H_{K_m}^{\rm pr}$ whose Galois group is of 
exponent $p^m$, then to use the Spiegel involution giving a $p$-adic measure 
annihilating, for $m\to\infty$, the finite Galois group ${\mathcal T}_K$
(see \cite{Gra2, Gra6} for more history).
The case $p=2$ is particularly tricky; to overcome this difficulty, we shall 
refer to \cite{Gra1,Grei}.

\smallskip
In fact, this process is equivalent to get, elementarily, an explicit approximation 
of the $p$-adic $L$-functions ``at $s=1$'', avoiding the ugly computation of Gauss 
sums and $p$-adic logarithms of cyclotomic units \cite[Theorem 5.18]{Wa}.
We have the following result with a detailed proof in \cite[Theorems 5.3, 5.5]{Gra6}:

\begin{proposition} For  $p \geq 2$, let $p^e$ be the exponent of 
${\mathcal T}_K$
for $K=\Q(N)$. For all $m \geq e$, the $(\Z/qp^m \Z)[{G_m}]$-module 
${\mathcal T}_K$ is annihilated by ${\mathcal S}'^{c\,*}_{L_m}$.
\end{proposition}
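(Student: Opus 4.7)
The plan is to lift ${\mathcal T}_K$ to a module over the auxiliary cyclotomic field $L_m = K(\mu_{qp^m})$, use Kummer duality to relate it to a radical $W_m \subseteq L_m^\times/L_m^{\times p^m}$ on which Stickelberger's theorem applies, and then transport the annihilator back via the Spiegel involution and descent through $L_m/K_m$. I first fix $m \geq e$, so that $p^m$ already kills ${\mathcal T}_K$, and consider the maximal subextension $M_m \subseteq H_K^{\rm pr}$ whose Galois group over $K_m$ has exponent dividing $p^m$. Under Leopoldt's conjecture, ${\mathcal G}_K = \Gamma_K \oplus {\mathcal T}_K$, and class field theory identifies $\G(M_m/K_m)$ with ${\mathcal T}_K$. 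The compositum $M_m L_m/L_m$ is then a $p$-ramified Kummer extension of exponent $p^m$, which by Kummer duality corresponds to a finite subgroup $W_m \subseteq L_m^\times/L_m^{\times p^m}$.

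The radical $W_m$ is built from elements $\alpha$ whose fractional ideal is a $p^m$-th power away from $p$. Stickelberger's theorem, as formulated in \cite{Gra2,Gra6}, asserts that ${\mathcal S}_{L_m}^c \in \Z[G_m]$ annihilates the $p$-part of the ideal class group of $L_m$ for every $c$ prime to $f_N^m$, and a careful analysis at the $p$-places shows that it kills $W_m$ modulo $p^m$-th powers. By Kummer duality, $\G(M_m L_m/L_m) \cong \mathrm{Hom}(W_m, \mu_{p^m})$ as $G_m$-modules, so the Galois action on $\G(M_m L_m/L_m)$ is the dual of the action on $W_m$ twisted by the cyclotomic character $\kappa_m$; at the level of annihilators, this dualization is precisely the Spiegel involution $x \mapsto x^*$ of \S\,3.3, and consequently ${\mathcal S}_{L_m}^{c\,*}$ annihilates $\G(M_m L_m/L_m)$. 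Applying Lemma \ref{sprime}, ${\mathcal S}_{L_m}^{c\,*} = {\mathcal S}'^{c\,*}_{L_m}\cdot(1+s_\infty)$; since $K$ is totally real and ${\mathcal T}_K$ is fixed by $s_\infty$, the factor $(1+s_\infty)$ acts as the invertible scalar $2$ on the relevant plus part, leaving ${\mathcal S}'^{c\,*}_{L_m}$ as an annihilator in $(\Z/qp^m\Z)[G_m]$. For $p>2$, $[L_m:K_m]=p-1$ is prime to $p$, so the restriction to $K_m$ together with the surjectivity of the norm ${\mathcal T}_{K_m}\twoheadrightarrow {\mathcal T}_K$ descends the annihilation to ${\mathcal T}_K$.

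The main obstacle is the case $p=2$, where $[L_m:K_m]=2$ is no longer coprime to $p$, the Teichm\"uller character only makes sense modulo $4$, and the auxiliary subgroup ${\mathcal W}_K$ of Lemma \ref{lemmaW} intervenes; this is precisely why $q=4$ and why one works in the ring $(\Z/qp^m\Z)[G_m]$. Here one must carefully monitor the action of complex conjugation on the Spiegel-twisted radical, verify that choosing an odd auxiliary $c$ keeps $1-c\bigl(\tfrac{L_m}{c}\bigr)^{-1}$ away from the $2$-torsion obstruction coming from ${\mathcal W}_K$, and appeal to the refined analyses of \cite{Gra1,Grei}. Once this bookkeeping is settled, the same Kummer--Spiegel scheme yields the stated annihilation uniformly in $m \geq e$; the hypothesis $m \geq e$ is precisely what ensures that reducing modulo $p^m$ loses no information about ${\mathcal T}_K$.
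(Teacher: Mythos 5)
Your proposal follows essentially the same route as the paper, which does not prove the proposition in situ but sketches exactly this argument (Kummer radical of the exponent-$p^m$ subextension of $H_{K_m}^{\rm pr}$, Stickelberger annihilation, Spiegel involution, descent for $p\nmid[L_m:K_m]$) and defers the details to \cite[Theorems 5.3, 5.5]{Gra6} and, for the delicate case $p=2$, to \cite{Gra1,Grei} just as you do. Your expansion is consistent with that sketch and correctly identifies where the work lies (annihilation of the radical at the $p$-places, invertibility of $1+s_\infty$ on the plus part for $p>2$).
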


From the expression of ${\mathcal S}'^c_{L_m}$ (Lemma \ref{sprime}), 
the Spiegel involution yields:
\begin{equation}
{\mathcal S}'^{c\,*}_{L_m} \equiv  
 \sm_{a=1}^{f_N^m/2}  \Big[ \lambda^m_a(c) + \ffrac{1-c}{2} \Big]\, 
 a^{-1} \Big(\ffrac{L_m}{a} \Big) \pmod {q p^m},
\end{equation}
defining a coherent family in $\ds\limproj_{m \geq e} (\Z/qp^m \Z) [{G_m}]$.
One obtains, by restriction of ${\mathcal S}'^{c\,*}_{L_m}$ to $K$, a 
coherent family of annihilators of ${\mathcal T}_K$, whose $p$-adic limit
$$\hbox{${\mathcal A}'^c_K :=
\ds\lim_{m \to \infty} \sm_{a=1}^{f_N^m/2} \Big[ \lambda^m_a(c) 
+ \ffrac{1-c}{2} \Big]\, a^{-1} \Big(\ffrac {K}{a} \Big)$ in $\Z_p[\G(K/\Q)]$,}$$ 
is a canonical annihilator of ${\mathcal T}_K$.

\begin{remark}
Let $\alpha_{L_m}^*:= \big[\sum_{a=1}^{f_N^m}  \big(\frac{L_m}{a} \big)^{-1} \big]^*
\!\equiv \sum_{a=1}^{f_N^m} a^{-1}  \big(\frac{L_m}{a} \big)\!\!\! \pmod{qp^m}$; then:
$\alpha_{L_m}^* := \!\sum_{a=1}^{f_N^m/2} a^{-1}  \big(\frac{L_m}{a} \big) +
(f_N^m-a)^{-1}  \big(\frac{L_m}{f_N^m-a} \big)\! \equiv\!
\sum_{a=1}^{f_N^m/2} a^{-1} \big(\frac{L_m}{a} \big)(1-s_\infty) \,{\rm mod}\, {f_N^m}$,

\smallskip
\noindent
which annihilates ${\mathcal T}_K$, modulo $p$, by restriction since $K$ is real.
We shall neglect $\frac{1-c}{2}\cdot \alpha_{L_m}^*$ and we still denote 
${\mathcal A}'^c_K := \ds\lim_{m \to \infty} \Big[\hbox{ $\sm_{a=1}^{f_N^m/2} 
\lambda^m_a(c) \, a^{-1} \Big(\ffrac{K}{a} \Big)$} \Big]$.  
\end{remark}

\begin{lemma}\label{psi}
For $K=\Q(N)$, $\psi_N$ of order $N$ and conductor $f_N$, one has
$\psi_N({\mathcal A}'^c_K) = (1-\psi_N (c)) \cdot \hbox{$\frac{1}{2}$} L_p(1,\psi_N)$.
\end{lemma}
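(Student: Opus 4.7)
The strategy is to identify $\psi_N({\mathcal A}'^c_K)$ with the standard $p$-adic integral expression for $(1-\psi_N(c))\cdot\tfrac{1}{2}L_p(1,\psi_N)$ via the Kubota--Leopoldt interpolation formula. By continuity of $\psi_N:\Z_p[\G(K/\Q)]\to\Z_p[\psi_N]$, I first interchange $\psi_N$ with the $p$-adic limit to get
$$\psi_N({\mathcal A}'^c_K) = \ds\lim_{m\to\infty}\sm_{a=1,\,(a,f_N^m)=1}^{f_N^m/2} \lambda^m_a(c)\, a^{-1}\, \psi_N(a),$$
where $\psi_N$ is viewed as the primitive even Dirichlet character of conductor $f_N$ attached to $K$, so that $\psi_N\bigl(\bigl(\tfrac{K}{a}\bigr)\bigr)=\psi_N(a)$.

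The factor $(1-\psi_N(c))$ arises from the Stickelberger multiplier $\bigl(1-c\,\bigl(\tfrac{L_m}{c}\bigr)^{-1}\bigr)$ built into $\mathcal{S}^c_{L_m}$: under the Spiegel involution, $\bigl(c\,\bigl(\tfrac{L_m}{c}\bigr)^{-1}\bigr)^{*} = \bigl(\tfrac{L_m}{c}\bigr)$, since $\kappa\bigl(\bigl(\tfrac{L_m}{c}\bigr)\bigr)=c$; the multiplier therefore becomes $\bigl(1-\bigl(\tfrac{L_m}{c}\bigr)\bigr)$, which after restriction to $K$ has $\psi_N$-value $(1-\psi_N(c))$. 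Recognising from $\lambda^m_a(c) = (ca'_c-a)/f_N^m$ that $-\bigl[\lambda^m_a(c)+\tfrac{1-c}{2}\bigr]$ is the value on $a+f_N^m\Z_p$ of the $c$-regularised Bernoulli distribution---which defines an honest $\Z_p$-valued measure $E_c$ on $\Z_p^\times$---the finite sum above becomes a Riemann approximation of a $p$-adic integral against $E_c$.

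The discarded $\tfrac{1-c}{2}\,\alpha^*_{L_m}$-contribution vanishes in the limit by asymptotic antisymmetry: as $m\to\infty$ we have $f_N^m\to 0$ in $\Z_p$, hence $(f_N^m-a)^{-1}\to -a^{-1}$, and combined with the evenness $\psi_N(f_N^m-a)=\psi_N(a)$ (since $K$ is totally real), the sum $\sm_a a^{-1}\psi_N(a)$ over $[1,f_N^m-1]$ cancels asymptotically. The factor $\tfrac12$ arises from the summation range $[1,f_N^m/2]$ being half of $[1,f_N^m-1]$: after symmetric extension, the limit equals $\tfrac12\int_{\Z_p^\times}\psi_N(x)\,x^{-1}\,dE_c(x)$, which by the interpolation property of the Kubota--Leopoldt $p$-adic $L$-function (compare \cite[Theorems 5.3, 5.5]{Gra6} or \cite[Theorem 5.18]{Wa}) equals $L_p(1,\psi_N)$.

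The hard part is the careful bookkeeping of normalisation constants---the factor $\tfrac12$ from range folding, sign conventions in the Spiegel involution, and the Euler factor at $p$ (automatic here since the summation is restricted to $a$ coprime to $p$, matching the Kubota--Leopoldt definition of $L_p$)---as well as verifying the asymptotic vanishing of the $\alpha^*_{L_m}$-contribution in the $p$-adic topology, which rests on the subtle observation that $f_N^m\to 0$ $p$-adically makes $a\mapsto a^{-1}\psi_N(a)$ asymptotically antisymmetric on $[1,f_N^m-1]$.
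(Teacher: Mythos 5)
Your proposal is correct and follows essentially the same route as the paper, whose proof of Lemma \ref{psi} is simply a citation of the classical construction of $p$-adic $L$-functions (Amice--Fresnel, \cite{Gra2}, \cite{Wa}); what you write out --- identifying $-\bigl[\lambda^m_a(c)+\tfrac{1-c}{2}\bigr]$ with the $c$-regularised Bernoulli measure, extracting $(1-\psi_N(c))$ from the multiplier via the Spiegel involution, and folding the range to produce the factor $\tfrac12$ --- is precisely the content of those references. The only caveat is that you flag rather than verify the final normalisation in the Kubota--Leopoldt interpolation formula, but the paper does no more.
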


\begin{proof}
Classical construction of $p$-adic $L$-functions (e.g., \cite[Propositions II.2, II.3, 
D\'efinition II.3, II.4, Remarques II.3, II.4]{Gra2}, after Amice--Fresnel works,
then \cite[Chapters 5, 7]{Wa}). For more details, see \cite[\S\,7.1]{Gra6}.
\end{proof}

\begin{proposition} \label{thmp}
Let $K := \Q(N)$, $N = \ell_1^{n_1} \cdots \ell_t^{n_t}$, and let $p \nmid N$. 
Then, for the $p$-adic character $\theta_N$ 
above a character $\psi_N$ of order $N$ of $K$, the component 
${\mathcal T}_K^{\theta_N}$ is annihilated by $(1 - \psi_N (c)) \cdot 
\hbox{$\frac{1}{2}$}L_p(1,\psi_N)$. 
Moreover, from the principal theorem of Ribet--Mazur--Wiles--Kolyvagin--Greither
on abelian fields, $\hbox{$\frac{1}{2}$}L_p(1,\psi_N)$ gives its order.
\end{proposition}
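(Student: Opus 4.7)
The plan is to chain together two ingredients already in place in the excerpt --- the annihilation of $\mathcal{T}_K$ by the canonical element $\mathcal{A}'^c_K$ (the proposition immediately preceding Lemma \ref{psi}) and Lemma \ref{psi} itself --- and then to invoke the abelian Main Conjecture to upgrade annihilation to equality of orders on the $\theta_N$-component.

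First I would use that $p \nmid N = \order \G(K/\Q)$, so the group algebra $\Z_p[\G(K/\Q)]$ decomposes as a product of its $\theta$-isotypic factors over the $\Z_p$-irreducible characters $\theta$, and $\mathcal{T}_K$ splits as $\bigoplus_\theta \mathcal{T}_K^\theta$ accordingly. Restricting the annihilation $\mathcal{A}'^c_K \cdot \mathcal{T}_K = 0$ to the $\theta_N$-component reduces it to annihilation by the scalar $\psi_N(\mathcal{A}'^c_K)$ for any absolutely irreducible $\psi_N \mid \theta_N$; Lemma \ref{psi} identifies this scalar with $(1-\psi_N(c)) \cdot \tfrac{1}{2} L_p(1,\psi_N)$, which settles the first assertion.

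Next, for the order statement, I would choose $c$ so that $\psi_N(c) \ne 1$ (possible by \v Cebotarev, since $\psi_N$ is a non-trivial character of $\G(K/\Q)$) and remark that $1-\psi_N(c)$ is then a unit in $\Z_p[\mu_N]$, because $\psi_N(c)$ is a non-trivial $N$-th root of unity and $p \nmid N$. So $\mathcal{T}_K^{\theta_N}$ is annihilated, up to a $p$-adic unit, by $\tfrac{1}{2} L_p(1,\psi_N)$ alone, and it remains to show this annihilation is sharp, i.e.\ $\order \mathcal{T}_K^{\theta_N}$ equals the $p$-part of $\tfrac{1}{2}L_p(1,\psi_N)$. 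To this end I would apply the Iwasawa Main Conjecture for abelian extensions of $\Q$ (Mazur--Wiles for $p>2$; Greither, via the Kolyvagin--Rubin Euler system, for $p=2$) on the $\theta_N$-eigenspace of the $\mathcal{T}$-type Iwasawa module over $K_\infty/K$; this equates its characteristic ideal with the Deligne--Ribet $p$-adic $L$-function. A descent at the base layer $K$ --- exploiting the product formula $\order \mathcal{T}_K = \order \mathcal{C}_K \cdot \order \mathcal{R}_K \cdot \order \mathcal{W}_K$ from the introduction together with the exact sequence of item 2.1(b) --- then converts the infinite-level statement into the claimed equality at finite level.

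The main obstacle I expect is precisely this descent on the $\theta_N$-component: one has to verify that no spurious Euler factor or coinvariant correction arises when passing from $K_\infty$ to $K$. This is exactly why the normalization constants $p^{d-1}$ (respectively $2^{s_2-1}\cdot 2^{d-1}$ for $p=2$) in the definition of $\mathcal{R}_K$ and the auxiliary module $\mathcal{W}_K$ of Lemma \ref{lemmaW} are built into the product formula for $\order \mathcal{T}_K$: they absorb precisely those factors which would otherwise obstruct the clean identification of $\order \mathcal{T}_K^{\theta_N}$ with $L_p(1,\psi_N)/2$ up to sign.
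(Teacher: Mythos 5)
Your proposal is correct and follows essentially the same route the paper intends: the paper states Proposition \ref{thmp} without a separate proof, as the immediate combination of the preceding annihilation proposition (restricted to the $\theta_N$-isotypic factor, which exists because $p\nmid N$) with Lemma \ref{psi}, the invertibility of $1-\psi_N(c)$ for $\psi_N(c)\ne 1$, and the cited Mazur--Wiles/Greither main theorem for the order statement. Your extra discussion of the descent from $K_\infty$ to $K$ and of the role of the normalizations of ${\mathcal R}_K$ and ${\mathcal W}_K$ is a reasonable gloss on what the paper delegates wholesale to the references (\cite{Gra6} and the Main Conjecture literature), but it does not change the argument.
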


In the practice, taking $c=2$ in the programs when $N$ is odd and $p \ne 2$, 
we obtain the annihilation by $(1 - \psi_N (2))  \cdot \hbox{$\frac{1}{2}$}L_p(1,\psi_N)$, 
where $\psi_N (2)$ is a root of unity of order dividing $N$, hence prime
to $p$; thus $(1 - \psi_N (2))$ is invertible modulo $p$, except when $\psi_N(2) = 1$ 
for $N = 1093$, $3511$,\,$\ldots$ which are in fact unfeasible numerically. 
If $p=2$ an odd $c$ prime to $N$ must be chosen.

\begin{lemma}\cite[Corollary 7.3\,(iii)]{Gra6}.  \label{norm}
We have ${\mathcal A}'^c_K \equiv \! \sm_{a=1}^{q f_N/2}
\lambda_a^0(c) \, a^{-1} \Big(\ffrac{K}{a} \Big)$ (mod $p$), 
whence $\psi_N({\mathcal A}'^c_K)
\equiv (1 - \psi_N (c))  \cdot \hbox{$\frac{1}{2}$}L_p(1,\psi_N) \pmod p$.
\end{lemma}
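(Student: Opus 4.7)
The plan is to establish the stated mod-$p$ congruence for ${\mathcal A}'^c_K$ by showing that, for every $m \geq 0$, the level-$m$ approximation
$$\sigma_m := \sum_{a=1}^{f_N^m/2} \lambda_a^m(c)\,a^{-1}\Big(\tfrac{K}{a}\Big),$$
whose $p$-adic limit is ${\mathcal A}'^c_K$ by the very definition given in the excerpt, is already congruent to $\sigma_0$ modulo $p$. Once this is done, passing to the limit preserves the congruence (since $p\,\Z_p[\G(K/\Q)]$ is closed), and applying $\psi_N$ together with Proposition \ref{thmp} yields the second assertion.

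First I would exploit two periodicities to simplify $\sigma_m \pmod p$: the Artin symbol $\big(\tfrac{K}{a}\big)\in \G(K/\Q)$ depends only on $a$ modulo $f_N$ (the conductor of $K$, coprime to $p$), while the residue $a^{-1} \bmod p$ depends only on $a$ modulo $p$. Regrouping terms by the class $b$ of $a$ modulo $qf_N$ (with $b$ coprime to $qf_N$, which already guarantees coprimality to $f_N^m = qp^m f_N$), I obtain
$$\sigma_m \equiv \sum_{b} b^{-1}\Big(\tfrac{K}{b}\Big) \sum_{k}\lambda_{b+k\cdot qf_N}^{\,m}(c) \pmod p,$$
where $k$ runs over the integers making $b + k\cdot qf_N$ lie in $[1,\, f_N^m/2]$.

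Next I would establish, by direct manipulation of the definition $\lambda_a^m(c) = (a'_c\,c - a)/f_N^m$ together with the bijection $a \mapsto a'_c$ used in \S\,\ref{conductor}, a distribution-type relation showing that modulo $p$ the inner sum $\sum_k \lambda_{b+k\,qf_N}^m(c)$ collapses to $\lambda_b^0(c)$, all other contributions being divisible by $p$. The key inputs are the coherence of successive Stickelberger elements under the natural surjection $(\Z/qp^{m+1}\Z)[G_{m+1}] \twoheadrightarrow (\Z/qp^m\Z)[G_m]$, and the antisymmetry of Lemma \ref{sprime}, which legitimizes truncating to the half range. This yields $\sigma_m \equiv \sigma_0 \pmod p$; after passage to the $p$-adic limit this gives the first assertion, and Proposition \ref{thmp} then delivers $\psi_N({\mathcal A}'^c_K) \equiv (1-\psi_N(c))\cdot \tfrac12 L_p(1,\psi_N) \pmod p$.

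The main obstacle is the distribution relation for the integers $\lambda_a^m(c)$: one must keep careful track of how the representative $a'_c \in [1,\,f_N^m]$ at level $m$ relates to its level-$0$ analogue in $[1,\,qf_N]$, and how that discrepancy, multiplied by the relevant factor $1/f_N^m$, combines with the summation over $p^m$-many lifts to produce a correction divisible by $p$. This is elementary but requires somewhat intricate bookkeeping; the clean version of the computation in the language used here is exactly that of \cite[\S\,7.1]{Gra6}, adapted from the Amice--Fresnel construction of $p$-adic $L$-functions recalled in \cite[Propositions II.2--II.4]{Gra2} and \cite[Chapters 5, 7]{Wa}, on which Proposition \ref{thmp} and Lemma \ref{psi} already depend.
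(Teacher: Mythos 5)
The paper contains no internal proof of Lemma \ref{norm}: it is imported verbatim from \cite[Corollary 7.3\,(iii)]{Gra6}, so there is no argument of the paper's own to compare yours against. Your outline is the expected (and correct) one: the two periodicities --- $\big(\frac{K}{a}\big)$ through $a \bmod f_N$ and $a^{-1}$ through $a \bmod p$ --- reduce everything to a distribution relation for the $\lambda^m_a(c)$, and since $f_N^m = qp^mf_N$ and $qf_N$ have the same prime support, the restriction of ${\mathcal S}^c_{L_m}$ to $L_0$ is exactly ${\mathcal S}^c_{L_0}$ with no parasitic Euler factor; that is what makes the level-$m$ sums collapse mod $q$ onto the level-$0$ one, and the second assertion then follows from the exact evaluation $\psi_N({\mathcal A}'^c_K)=(1-\psi_N(c))\cdot\frac12 L_p(1,\psi_N)$.

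Two corrections, though, to what you wrote. First, the inner sum does \emph{not} collapse to $\lambda^0_b(c)$ with ``all other contributions divisible by $p$'': the exact identity coming from the restriction of the integral Stickelberger elements is $\sum_{a\equiv b\,(qf_N)}\lambda^m_a(c)=\lambda^0_b(c)+(1-p^m)\frac{1-c}{2}$, so a residual $\frac{1-c}{2}$ survives modulo $p$ in each class $b$. These leftovers assemble into a multiple of the element $\alpha^*$ of the Remark preceding Lemma \ref{psi}, and they only disappear upon restriction to the \emph{real} field $K$ (where $1-s_\infty\mapsto 0$), combined with the folding $b\leftrightarrow qf_N-b$ of Lemma \ref{sprime}; without tracking this the claimed congruence $\sigma_m\equiv\sigma_0\pmod p$ is false as stated. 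Since this bookkeeping is precisely the content of the lemma, deferring it to \cite[\S\,7.1]{Gra6} --- the very source the paper cites --- means your write-up does not yet constitute an independent proof. Second, the ``whence'' should invoke Lemma \ref{psi}, which computes $\psi_N({\mathcal A}'^c_K)$ exactly; Proposition \ref{thmp} is only an annihilation statement and does not give the value of $\psi_N({\mathcal A}'^c_K)$.
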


Thus, we have obtained a computable 
characterization of non-triviality of ${\mathcal T}_K$, for $K=\Q(N)$,
$N=\ell_1^{n_1} \cdots \ell_t^{n_t}$, and for any $p \geq 2$, $p \nmid N$,
where $f_N$ is the conductor of $K$ (see \S\,\ref{conductor}):

\begin{theorem}\label{st*} Let $L=K(\mu_q)$, $q=p$ or $4$ as usual.
Let $c$ be an integer prime to $q\,f_N$. 
For all $a \in [1,q\,f_N]$, prime to $q\,f_N$, let $a'_c$ be the 
unique integer in $[1, q\,f_N]$ such that $a'_{c} \cdot c \equiv a \pmod {q\,f_N}$ 
and put $a'_c \cdot  c - a = \lambda_a(c)\, q\,f_N$, $\lambda_a(c) \in \Z$.
Let ${\mathcal A}'^c_K \equiv 
\sm_{a=1}^{q\,f_N/2} \lambda_a(c) \, a^{-1} \Big(\ffrac{K}{a} \Big) \pmod p$,
let $\psi_N$ be a character of $K$ of order $N$ and $\theta_N$ the $p$-adic character 
above $\psi_N$.
Then, if $c$ is chosen such that $\psi_N(c) \ne 1$, the $\theta_N$-component of 
the $\Z_p[\G(K/\Q)]$-module ${\mathcal T}_K$ is non-trivial if and only if 
$\psi_N({\mathcal A}'^c_K)$ is not a $p$-adic unit.
\end{theorem}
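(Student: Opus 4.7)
The plan is to deduce the theorem directly by combining Proposition \ref{thmp} with Lemma \ref{norm}, and then carefully tracking which factors are $p$-adic units. The work has essentially been done: what remains is to rephrase the annihilation/order statement as an ``iff'' characterization computable from the explicit sum ${\mathcal A}'^c_K$ given in the theorem.

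First I would recall from Proposition \ref{thmp} that the $\theta_N$-component ${\mathcal T}_K^{\theta_N}$ is annihilated by $(1-\psi_N(c))\cdot\tfrac{1}{2}L_p(1,\psi_N)$ and, crucially, that by the Ribet--Mazur--Wiles--Kolyvagin--Greither theorem the $p$-adic valuation of $\tfrac{1}{2}L_p(1,\psi_N)$ is precisely the length of ${\mathcal T}_K^{\theta_N}$. Hence ${\mathcal T}_K^{\theta_N}\neq 1$ is equivalent to $\tfrac{1}{2}L_p(1,\psi_N)$ not being a $p$-adic unit. Next, Lemma \ref{norm} supplies the explicit congruence
\[
\psi_N({\mathcal A}'^c_K)\equiv (1-\psi_N(c))\cdot\tfrac{1}{2}L_p(1,\psi_N)\pmod p,
\]
which is valid precisely because $\sum_{a=1}^{qf_N/2}\lambda_a(c)\,a^{-1}\big(\tfrac{K}{a}\big)$ is the mod $p$ truncation of the coherent family of Stickelberger twists constructed in Section \ref{annihilation}, restricted to $K$ via the Spiegel involution applied at level $m=0$.

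The key small observation is then that $1-\psi_N(c)$ is a $p$-adic unit under the hypothesis $\psi_N(c)\ne 1$: indeed $\psi_N$ has order $N$ with $p\nmid N$, so $\psi_N(c)$ is a root of unity of order dividing $N$, hence prime to $p$, and $1-\zeta$ is a unit in $\Z_p[\mu_N]$ for any nontrivial such root of unity $\zeta$. Combining this with the preceding congruence, $\psi_N({\mathcal A}'^c_K)$ is a $p$-adic unit if and only if $\tfrac{1}{2}L_p(1,\psi_N)$ is, if and only if ${\mathcal T}_K^{\theta_N}=1$; taking the contrapositive yields the claimed equivalence.

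I do not expect any serious obstacle: the analytic input (Ribet--Mazur--Wiles--Kolyvagin--Greither) and the Stickelberger/Spiegel construction giving the mod $p$ expression for $\psi_N({\mathcal A}'^c_K)$ are already available in Proposition \ref{thmp} and Lemma \ref{norm}. The only point that deserves explicit comment is the elementary verification that $1-\psi_N(c)\in\Z_p[\mu_N]^\times$ whenever $\psi_N(c)\neq 1$, which is what forces the user to choose $c$ with $\psi_N(c)\ne 1$ (and explains, as noted after the theorem, why the exceptional moduli $N=1093,3511,\dots$ with $2^{N-1}\equiv 1\pmod{N^2}$ cause the choice $c=2$ to fail).
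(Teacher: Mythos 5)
Your proposal is correct and follows essentially the same route as the paper, which states Theorem \ref{st*} as a direct consequence of Proposition \ref{thmp} (annihilation plus the Ribet--Mazur--Wiles--Kolyvagin--Greither order statement) and Lemmas \ref{psi} and \ref{norm}, exactly the chain you assemble. Your explicit remark that $1-\psi_N(c)$ is invertible modulo $p$ because $\psi_N(c)$ is a root of unity of order prime to $p$ is precisely the observation the paper makes in the paragraph following Proposition \ref{thmp}.
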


\subsection{Numerical test ${\mathcal T}^*_{\Q(\ell^n)} \ne 1$ 
for $\ell>2$, $p>2$} \label{programIII-IV}

We have, from \S\,\ref{algebra}, ${\mathcal T}_{\Q(\ell^n)} = 
{\mathcal T}_{\Q(\ell^n)}^* \plus {\mathcal T}_{\Q(\ell^{n-1})}$. 
For a character $\psi$ of order $\ell^n$ of $K$,
the condition $\psi({\mathcal A}'^c_{\Q(\ell^n)}) \equiv 0 \pmod {{\mathfrak p}}$, for 
some ${\mathfrak p} \mid p$, is equivalent to the non-triviality of 
${\mathcal T}_{\Q(\ell^n)}^*$, due to the $p$-adic character $\theta$ above $\psi$.
We compute $\psi({\mathcal A}'^c_{\Q(\ell^n)}) \!\!\pmod p$ and test if
the norm of this element is divisible by $p$; this characterize the
condition ${\mathcal T}^*_{\Q(\ell^n)} \ne 1$:
\footnotesize
\begin{verbatim} 
PROGRAM III. TEST #T*>1 WITH NORM COMPUTATIONS FOR el>2, p>2
{C=2;forprime(el=3,120,for(n=1,4,Q=polcyclo(el^n);
h=znprimroot(el^(n+1));H=lift(h);forprime(p=3,2500,if(p==el,next);
f=p*el^(n+1);cm=Mod(C,f)^-1;g=znprimroot(p);G=lift(g);gm=g^-1;
e=lift(Mod((1-H)*el^(-n-1),p));H=H+e*el^(n+1);h=Mod(H,f);
e=lift(Mod((1-G)*p^-1,el^(n+1)));G=G+e*p;g=Mod(G,f);
S=0;hh=1;gg=1;ggm=1;for(u=1,el^n*(el-1),hh=hh*h;
t=0;for(v=1,p-1,gg=gg*g;ggm=ggm*gm;a=lift(hh*gg);A=lift(a*cm);
t=t+(A*C-a)/f*ggm);S=S+lift(t)*x^u);s=Mod(S,Q);vp=valuation(norm(s),p);
if(vp>0,print("el=",el," n=",n," p=",p)))))}
\end{verbatim}
\normalsize

The program finds again very quickly the cases of Table \ref{T1}:
($\ell^n=3$, $p = 7$, $p=73$), 
($\ell^n=27$, $p=109$), ($\ell^n=81$, $p=487$, $p=1621$), etc.,
($\ell=5$, $p=11$), ($\ell=25$, $p=101$, $p=1151$, $p=2251$), 
($\ell=125$, $p=2251$), etc. 
 
\smallskip
Interesting cases are $\ell = 5$ giving
${\mathcal T}_{\Q(5^2)} \simeq \Z/2251\Z$ and 
${\mathcal T}^*_{\Q(5^3)} \simeq \Z/2251\Z$; which implies that
${\mathcal T}_{\Q(5^3)}$ contains $(\Z/2251\Z)^2$.

\smallskip
To verify, we have computed, with Program II of \S\,\ref{T1}, 
the structure of ${\mathcal T}_{\Q(\ell^n)}$
for $\ell^n=27$, $p=109$, which is much longer and needs an huge computer 
memory; we get as expected ${\sf el=3\  n=3\  p=109\ \  rk(T)=1\ \ T=[109]}$.
 
Whence, we can propose the following program, only
considering primes $p \equiv 1 \!\!\pmod{\ell^n}$, so that
$p$ splits completely in $\Q(\mu_{\ell^n})$ which allows
to characterize, once for all, a prime ${\mathfrak p} \mid p$ by
means of a congruence ${\sf z \equiv r \ (mod \,{\mathfrak p}})$,
where ${\sf z}$ denotes, in the program, a generator of $\mu_{\ell^n}$
and ${\sf r}$ a rational integer, then avoiding the computation 
of ${\sf N=norm(s)}$ in some programs, which takes too much time. 
We then find supplementary examples.\par
\footnotesize
\begin{verbatim}
PROGRAM IV. TEST #T*>1 MODULO (zeta-r) WHEN p=1 (mod el^n) FOR el>2, p>2
{C=2;forprime(el=3,250,for(n=1,6,Q=polcyclo(el^n);h=znprimroot(el^(n+1));
H=lift(h);forprime(p=3,5000,if(Mod(p,el^n)!=1,next);Qp=Mod(1,p)*Q;
m=(p-1)/el^n;r=znprimroot(p)^m;f=p*el^(n+1);cm=Mod(C,f)^-1;
g=znprimroot(p);G=lift(g);gm=g^-1;
e=lift(Mod((1-H)*el^(-n-1),p));H=H+e*el^(n+1);h=Mod(H,f);
e=lift(Mod((1-G)*p^-1,el^(n+1)));G=G+e*p;g=Mod(G,f);
S=0;hh=1;gg=1;ggm=1;for(u=1,el^n*(el-1)/2,hh=hh*h;
t=0;for(v=1,p-1,gg=gg*g;ggm=ggm*gm;a=lift(hh*gg);A=lift(a*cm);
t=t+(A*C-a)/f*ggm);S=S+lift(t)*x^u);s=lift(Mod(S,Qp));
R=1;for(k=1,el^n,R=R*r;if(Mod(k,el)==0,next);t=Mod(s,x-R);
if(t==0,print("el=",el," n=",n," p=",p))))))}

PROGRAM V. VARIANT FOR ANY NUMBER d OF p-PLACES 
USING THE FACTORIZATION OF Q mod p
d (a power of el) may be optionally specified (e.g. d=1,el,...):

{el=3;C=2;for(n=1,10,Q=polcyclo(el^n);h=znprimroot(el^(n+1));H=lift(h);
forprime(p=5,2*10^4,f=p*el^(n+1);cm=Mod(C,f)^-1;Qp=Mod(1,p)*Q;
F=factor(Q+O(p));R=lift(component(F,1));d=matsize(F)[1];
g=znprimroot(p);G=lift(g);gm=g^-1;
e=lift(Mod((1-H)*el^(-n-2),p));H=H+e*el^(n+2);h=Mod(H,f);
e=lift(Mod((1-G)*p^-1,el^(n+2)));G=G+e*p;g=Mod(G,f);
S=0;hh=1;gg=1;ggm=1;for(u=1,el^n*(el-1)/2,hh=hh*h;
t=0;for(v=1,p-1,gg=gg*g;ggm=ggm*gm;a=lift(hh*gg);A=lift(a*cm);
t=t+(A*C-a)/f*ggm);S=S+lift(t)*x^u);s=lift(Mod(S,Qp));
for(k=1,d,t=Mod(s,R[k]);if(t==0,print("el=",el," n=",n," p=",p)))))}
\end{verbatim}
\normalsize

The following table is the addition of that obtained with Programs III--V:
\footnotesize
\begin{verbatim}
el=3   n=1  p=7         el=5    n=2  p=6701      el=67   n=1  p=269
el=3   n=1  p=73        el=5    n=3  p=2251      el=83   n=1  p=499
el=3   n=3  p=109       el=5    n=3  p=27751     el=101  n=1  p=607
el=3   n=3  p=17713     el=5    n=4  p=11251     el=107  n=1  p=857
el=3   n=4  p=487       el=17   n=1  p=239       el=109  n=1  p=50359
el=3   n=4  p=1621      el=23   n=1  p=47        el=131  n=1  p=2621
el=3   n=7  p=17497     el=29   n=1  p=59        el=131  n=1  p=8123
el=5   n=1  p=11        el=37   n=1  p=4441      el=131  n=1  p=34061
el=5   n=2  p=101       el=43   n=1  p=173       el=137  n=1  p=1097
el=5   n=2  p=1151      el=47   n=1  p=283       el=151  n=1  p=907
el=5   n=2  p=2251      el=61   n=1  p=1709      el=191  n=1  p=383
\end{verbatim}
\normalsize

In the case $p=2$, $\ell>2$, we have the exceptional prime $\ell = 11$
for which $3$ splits in $\Q(11)$, whence $1 - \psi_{11}(3)=0$, giving a 
wrong solution with $C=3$. Moreover, the $2$-adic characters 
$\theta$ of $\Q(\ell^n)$ cannot be of degree $1$ in practice since $2$ is inert
in $\Q(\ell)$ except for the two known cases of non-trivial Fermat
quotients of $2$ modulo $\ell$; so we are obliged to test with the 
computation of a norm in $\Q(\mu_{\ell})$. 
As expected, any program gives the solutions $\ell=1093$, $n=1$, $p=2$ 
and $\ell=3511$, $n=1$, $p=2$. For $\ell = 1093$, see Remarks \ref{rema1093}.

\subsection{Numerical test ${\mathcal T}^*_{\Q(2^n)} \ne 1$ 
for $\ell = 2$, $p>2$}\label{programVI-VIII}

We have only to modify the conductor $p\,2^{n+2}$ of $L=K(\mu_p)$
where $K=\Q(2^n)$, then note that we must choose another multiplier 
for the Stickelberger element and the generator ${\sf h=Mod(5,el^{(n+2)})}$
(for $p=3$ one must take $C=5$ giving the solution ${\sf el=2\ \ n=3\ \ p=3}$).
To obtain a half-system $S$ for $a \in [1, p\,2^{n+2}]$ we can neglect the subgroup 
generated by the generator of $\G(\Q(\mu_f)/\Q(2^n)(\mu_p))$
(will be proven in the general case in Lemma \ref{half}):\par
\footnotesize
\begin{verbatim}
PROGRAM VI. TEST #T>1 WITH NORM COMPUTATIONS FOR el=2, p>3
{el=2;for(n=1,8,Q=polcyclo(el^n);h=Mod(5,el^(n+2));H=lift(h);C=3;
forprime(p=5,2*10^4,f=p*el^(n+2);cm=Mod(C,f)^-1;
g=znprimroot(p);G=lift(g);gm=g^-1;
e=lift(Mod((1-H)*el^(-n-2),p));H=H+e*el^(n+2);h=Mod(H,f);
e=lift(Mod((1-G)*p^-1,el^(n+2)));G=G+e*p;g=Mod(G,f);
S=0;hh=1;gg=1;ggm=1;for(u=1,el^n,hh=hh*h;
t=0;for(v=1,p-1,gg=gg*g;ggm=ggm*gm;a=lift(hh*gg);A=lift(a*cm);
t=t+(A*C-a)/f*ggm);S=S+lift(t)*x^u);s=Mod(S,Q);
vp=valuation(norm(s),p);if(vp>0,print("el=",el," n=",n," p=",p))))}
\end{verbatim}
\normalsize

For instance the results ${\sf el=2}$ ${\sf n=1}$ ${\sf p=13}$, 
${\sf el=2}$ ${\sf n=2}$ ${\sf p=13}$
correspond to the following cases of Table \ref{T1}:
${\sf el^n=2}$  ${\sf p=13}$ ${\sf rk(T)=1}$ ${\sf T=[13]}$ and
${\sf el^n=4}$  ${\sf p=13}$ ${\sf  rk(T)=2}$ ${\sf T=[169,13]}$.

\smallskip
As for $\ell >2$, we have a faster program using only primes
$p \equiv 1\!\! \pmod{2^n}$, which gives new solutions (e.g., $\ell^n=2^{10}$, 
$p=114689$). The table below is the addition of that obtained with these
two programs:\par
\footnotesize
\begin{verbatim}
PROGRAM VII. TEST #T*>1 MODULO (zeta-r) WHEN p=1 (mod el^n) FOR el=2, p>3
{el=2;for(n=1,12,Q=polcyclo(el^n);h=Mod(5,el^(n+2));H=lift(h);C=3;
forprime(p=5,2*10^5,if(Mod(p,el^n)!=1,next);f=p*el^(n+2);cm=Mod(C,f)^-1;
Qp=Mod(1,p)*Q;m=(p-1)/el^n;r=znprimroot(p)^m;
g=znprimroot(p);G=lift(g);gm=g^-1;
e=lift(Mod((1-H)*el^(-n-2),p));H=H+e*el^(n+2);h=Mod(H,f);
e=lift(Mod((1-G)*p^-1,el^(n+2)));G=G+e*p;g=Mod(G,f);
S=0;hh=1;gg=1;ggm=1;for(u=1,el^n,hh=hh*h;
T=0;for(v=1,p-1,gg=gg*g;ggm=ggm*gm;a=lift(hh*gg);A=lift(a*cm);
T=T+(A*C-a)/f*ggm);S=S+lift(T)*x^u);s=lift(Mod(S,Qp));
R=1;for(k=1,el^n,R=R*r;if(Mod(k,el)==0,next);t=Mod(s,x-R);
if(t==0,print("el=",el," n=",n," p=",p)))))}

el=2   n=1   p=13      el=2   n=3   p=3         el=2   n=7   p=257      
el=2   n=1   p=31      el=2   n=3   p=521       el=2   n=7   p=641
el=2   n=2   p=13      el=2   n=5   p=3617      el=2   n=8   p=18433          
el=2   n=2   p=29      el=2   n=5   p=4513      el=2   n=10  p=114689
el=2   n=2   p=37      el=2   n=6   p=193      

PROGRAM VIII. VARIANT USING THE FACTORIZATION OF Q (mod p)
for any number d of p-places of K
{el=2;for(n=1,12,Q=polcyclo(el^n);h=Mod(5,el^(n+2));H=lift(h);C=3;
forprime(p=5,2*10^5,f=p*el^(n+2);cm=Mod(C,f)^-1;Qp=Mod(1,p)*Q;
F=factor(Q+O(p));R=lift(component(F,1));d=matsize(F)[1];
\\d (a power of 2) may be optionally specified (e.g. d=1, d=2)
g=znprimroot(p);G=lift(g);gm=g^-1;
e=lift(Mod((1-H)*el^(-n-2),p));H=H+e*el^(n+2);h=Mod(H,f);
e=lift(Mod((1-G)*p^-1,el^(n+2)));G=G+e*p;g=Mod(G,f);
S=0;hh=1;gg=1;ggm=1;for(u=1,el^n,hh=hh*h;
T=0;for(v=1,p-1,gg=gg*g;ggm=ggm*gm;a=lift(hh*gg);A=lift(a*cm);
T=T+(A*C-a)/f*ggm);S=S+lift(T)*x^u);s=lift(Mod(S,Qp));
for(k=1,d,t=Mod(s,R[k]);if(t==0,print("el=",el," n=",n," p=",p)))))}
\end{verbatim}
\normalsize

\subsection{Test on the normalized $p$-adic regulator}\label{programR}
A {\it sufficient condition} to get the divisibility of $\order {\mathcal C}_K$ 
by $p$, when we have obtained ${\mathcal T}_K \ne 1$, is to establish 
that the normalized $p$-adic regulator ${\mathcal R}_K$ is a 
$p$-adic unit; otherwise, this only gives that very 
probably $\order {\mathcal C}_K = 1$. 
Since with PARI/GP the computation of units implies that of the
class number (due to ${\sf K=bnfinit(P)}$), there is no interest 
to test the $p$-divisibility of the regulator instead of looking at ${\sf K.no}$
(the class number), except to obtain some verifications and to get
the $p$-adic relations between the units.
 
\smallskip
The following program computes the $p$-rank of the matrix $M$ 
obtained by approximation (modulo~$p$) of the $p$-adic expressions 
$\frac{1}{p}{\rm log}_p(\varepsilon_i)$, written on a 
$\Z$-basis of $K$ (via ${\sf nfalgtobasis}$), for a system of fundamental units 
$\varepsilon_i$ given by PARI/GP; then ${\mathcal R}_K$ is a $p$-adic unit if and 
only if ${\rm rank}(M)=\ell^n-1$:\par
\footnotesize
\begin{verbatim}
PROGRAM IX. TEST ON THE REGULATOR R FOR el>2, n>=1
{el=17;n=1;p=239;N=el^n;if(el==2,P=x;for(j=1,n,P=P^2-2));
if(el!=2,P=polsubcyclo(el^(n+1),el^n));Pp=P*Mod(1,p^2);
K=bnfinit(P,1);E=K.fu;L=List;for(k=1,N-1,e=E[k];
ep=Mod(lift(e),Pp);epm=Mod(lift(e^-1),Pp);
Ep=ep;for(u=1,N,Ep=Ep^p);Ep=Ep*epm;le=lift(Ep-1);Le=0;
for(j=0,N,c=polcoeff(le,j);c=lift(c);Le=Le+c*x^j);
Le=nfalgtobasis(K,Le)/p;LogE=Mod(Le,p);listinsert(L,LogE,1));
M=matrix(N-1,N,i,j,polcoeff(L[i],j));rk=matrank(M);
if(rk==N-1,print("N =",N," p=",p," rk(M)=",rk," R_K invertible"));
if(rk<N-1,print("N =",N," p=",p," rk(M)=",rk," R_K=0 mod (p)"))}

N=3 p=7  rk(M)=1 R_K=0 mod (p)    N=17 p=239 rk(M)=15 R_K=0 mod (p)
N=3 p=73 rk(M)=1 R_K=0 mod (p)    N=23 p=47  rk(M)=21 R_K=0 mod (p)
N=5 p=11 rk(M)=2 R_K=0 mod (p)    N=29 p=59  rk(M)=27 R_K=0 mod (p)

N=2 p=13 rk(M)=0 R_K=0 mod (p)    N=4 p=29   rk(M)=2  R_K=0 mod (p)
N=2 p=31 rk(M)=0 R_K=0 mod (p)    N=4 p=37   rk(M)=2  R_K=0 mod (p)
N=4 p=13 rk(M)=1 R_K=0 mod (p)    N=8 p=521  rk(M)=6  R_K=0 mod (p)
\end{verbatim}
\normalsize

\subsection{Conjecture about the $p$-torsion groups ${\mathcal T}_{\Q(N)}$}
The annihilation Theorem \ref{st*} allows us to test the non-triviality 
of the integer $\order {\mathcal T}_K = \order {\mathcal C}_K \cdot 
\order {\mathcal R}_K \cdot \order {\mathcal W}_K$, for $K:= {\Q(N)}$, 
giving possible non-trivial class groups (see Lemma \ref{lemmaW} about 
${\mathcal W}_K$, in general trivial). More precisely, all computations 
or experiments depend on the relative components ${\mathcal T}_K^*$ 
whose orders are given by $\frac{1}{2} \, L_p(1,\psi_N)$, for $\psi_N$
of order $N$ of $K$.

\smallskip
Indeed, we do not see why $\order {\mathcal C}_K$ should be 
always trivial for an``algebraic reason'', even if it is known that 
${\mathcal R}_K$ may be, a priori, non-trivial
whatever the order of magnitude of $p$. Moreover,
an observation made in other contexts shows that, when 
$\order {\mathcal C}_K^* \cdot 
\order {\mathcal R}_K^*$ is non-trivial, the probability
of $\order {\mathcal R}_K^* \ne 1$ is, roughly, $p$ times that of
$\order {\mathcal C}_K^* \ne 1$.
The Cohen--Lenstra--Martinet heuristics (see \cite{BPR,Mi1,Mi2} for large
developments) give low probabilities for non-trivial $p$-class groups,
even in the case of residue degree $1$ of $p$ in $\Q(\mu_N)$.

\smallskip
As for the question of $p$-rationality of number fields, when $K \subset \wh \Q$ 
is fixed, the number of $p$ such that $\order {\mathcal T}_K^* 
\equiv 0 \pmod p$ may be finite as we have conjectured; whence the rarity 
of these cases.
Nevertheless, we propose the following conjecture claiming the infiniteness 
of non-trivial relative groups ${\mathcal T}_K^*$ 
when all parameters vary (i.e., the infiniteness of Table \ref{table}):
\begin{conjecture} 
There exist infinitely many pairs $(N, p)$, $N \geq 2$, $p$ prime, $p \nmid N$, 
such that $\frac{1}{2} \, L_p(1,\psi_N) \equiv 0 \pmod {{\mathfrak p}_N}$, for 
some ${\mathfrak p}_N \mid p$ in $\Q(\mu_N)$,
where $\psi_N$ is a character of $\Q(\mu_N)$ of order $N$ (whence 
${\mathcal T}_{\Q(N)}^* \ne 1$).
\end{conjecture}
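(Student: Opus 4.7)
The plan is to reduce the conjecture to a statement about the distribution of $p$-adic $L$-values modulo $p$, then combine a heuristic equidistribution argument with reflection-theoretic input. By the Ribet--Mazur--Wiles--Kolyvagin--Greither theorem invoked in Proposition \ref{thmp}, the condition $\frac{1}{2}L_p(1,\psi_N) \equiv 0 \pmod{{\mathfrak p}_N}$ is equivalent to the non-triviality of the $\theta_N$-component of ${\mathcal T}_{\Q(N)}^*$. Via the reflection theorem cited in \cite{Gra0} applied in $L = \Q(N)(\mu_p)$, one bounds this component below by a suitable odd-character component of the $p$-class group ${\mathcal C}_L$. So the conjecture is equivalent to the infinitude of $(N,p)$, $p\nmid N$, for which a prescribed character component of the class group of $\Q(N)(\mu_p)$ is non-trivial.

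First I would restrict attention to pairs $(N,p)$ with $p \equiv 1 \pmod N$, so that $p$ splits completely in $\Q(\mu_N)$, the residue degree $\rho_N$ equals $1$, and ${\mathfrak p}_N$ has residue field $\F_p$. By Dirichlet's theorem, such primes $p$ are infinite in number for each fixed $N$. In this setting, the explicit Stickelberger--Spiegel expression from Theorem \ref{st*} realises $\psi_N({\mathcal A}'^c_K) \bmod {\mathfrak p}_N$ as a concrete element of $\F_p$, given by a generalised Dedekind-sum expression in the integers $\lambda_a(c)$. Heuristically one expects equidistribution of this quantity in $\F_p$ as $p$ varies, so the ``probability'' of vanishing is $\frac{1}{p}$; since $\sum_{p \equiv 1\,(N)} \frac{1}{p}$ diverges, even for each fixed $N$ the expected number of solutions is infinite, and a fortiori when $N$ is also allowed to vary.

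For a rigorous attempt, a natural starting point is the case $N=\ell$ prime, where the reflection theorem connects the non-triviality of ${\mathcal T}_{\Q(\ell)}^{*,\theta_\ell}$ to irregularity-type conditions on odd character components of ${\mathcal C}_{\Q(\mu_{\ell p})}$. Invoking Jensen's theorem on the infinitude of irregular primes, together with explicit congruences between $\psi_\ell$ and appropriate Teichm\"uller twists, one would hope to transfer irregularity of $p$ into the non-triviality of ${\mathcal T}_{\Q(\ell)}^*$ for a suitable~$\ell$. A more ambitious route would be an analytic vanishing statement for $L_p(1,\psi_N) \bmod p$ in families, dual in spirit to Ferrero--Washington.

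The principal obstacle is exactly this last step: no current technique controls the vanishing of $p$-adic $L$-values as both $N$ and $p$ vary. Matching Jensen's theorem to a prescribed $\ell$ requires fine congruence information between irregularity indices and characters of order $\ell$ that does not appear in the literature, and a Ferrero--Washington-style statement in the opposite direction is entirely absent. For this reason the statement is presented as a conjecture: the divergence of the heuristic sum together with the extensive numerical evidence of Table~\ref{table} make it very plausible, but a proof appears to require a genuinely new analytic or Iwasawa-theoretic ingredient.
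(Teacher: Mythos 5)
This statement is a conjecture; the paper offers no proof of it, only heuristic motivation, so there is nothing to match your argument against line by line. You correctly recognize this and do not claim a proof, and your central heuristic --- that $\psi_N({\mathcal A}'^c_K)\bmod{\mathfrak p}_N$ should behave like a random element of the residue field, giving vanishing probability $\frac{1}{p^{\rho}}$ --- is exactly the probabilistic model the author uses (\S\,\ref{algebra} and the discussion following the conjecture), as is your observation that the totally split case $p\equiv 1\pmod N$, $\rho_N=1$, is where examples should concentrate.

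One substantive point where you diverge from the paper: you assert that since $\sum_{p\equiv 1\,(N)}\frac{1}{p}$ diverges, ``even for each fixed $N$ the expected number of solutions is infinite.'' The author explicitly rejects this reading of the Borel--Cantelli heuristic: for \emph{fixed} $K=\Q(N)$ he conjectures (following \cite{Gra4}) that only finitely many $p$ give ${\mathcal T}_K^*\ne 1$, arguing that regulators of algebraic numbers behave like Fermat quotients rather than like independent uniform residues, so the naive $\frac1p$ model overcounts. The infinitude claimed in the conjecture is meant to come from letting $N$ vary, not from accumulating primes over a single $N$. Your proposed rigorous routes (transferring Jensen's infinitude of irregular primes through the reflection theorem, or a Ferrero--Washington-type non-vanishing statement in reverse) are not pursued in the paper and, as you yourself note, face the obstacle that the reflection theorem of \S\,\ref{rt} ties ${\rm rk}_p({\mathcal T}_K)$ to the $\omega$-component of ${\mathcal C}_L$ for the \emph{trivial} tame character twist, whereas irregularity of $p$ concerns other odd components; no congruence mechanism is known that forces the relevant $\theta_N$-component to be hit. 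So your assessment that the statement must remain conjectural is consistent with the paper, but the fixed-$N$ divergence claim should be withdrawn or at least flagged as contradicting the author's own finiteness conjecture.
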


We have seen that the solutions $p$ to ${\mathcal T}_K^* \ne 1$, in the case
$K=\Q(\ell^n)$, are mostly of the form $p = 1 + \lambda\,\ell^n$ giving, possibly, a 
class group of $K$ roughly of order $O(\ell^n)$, which is very reasonable 
since the discriminant of $K$ is such that $\sqrt {D_K} = (\ell^n)^{O(\ell^n)}$,
whereas the class number fulfills the following general property 
$\order  C_K \leq c_{\ell^n,\epsilon} \! \cdot (\sqrt {D_K})^{1+\epsilon}$ 
\cite{ABC} and the $\epsilon$-conjecture 
$\order  C_K \leq c'_{\ell^n,\epsilon} \! \cdot (\sqrt {D_K})^{\epsilon}$.

\smallskip
Finally, if we assume that the $p$-class group ${\mathcal C}_K$
and the regulator ${\mathcal R}_K$ are random and
independent, the Weber class number 
conjecture is possibly false for some $\ell_0^{n_0}$, $p_0$,
the prime $\ell = 2$ being not specific.

\section{Reflection theorem  for $p$-class groups and $p$-torsion groups}\label{rt}
Reflection theorem compares the $p$-class group ${\mathcal C}_K$ of $K$ 
with a suitable component of the $p$-torsion group ${\mathcal T}_L$ of $L:=K(\mu_p)$.

\smallskip
Put ${\rm rk}_p(A) := {\rm dim}_{\F_p}(A/A^p)$ for any 
abelian group $A$ of finite type.

\subsection{Case $p = 2$}
Consider, once for all, the case $p=2$ with $2 \nmid N$. The reflection
theorem works in $K = \Q(N)$, with the trivial character; 
applied with the set $S$ of prime ideals of $K$ above $2$,
it is given by \cite[Proposition III.4.2.2, \S\,II.5.4.9.2]{Gra0}, where 
${\mathfrak m}^* = (4)$ and where ${\mathcal C}_K^{(4)}$ denotes
a ray class group modulo $(4)$. We have, in reflection theorems, the relation
${\mathcal T}_K^{\rm res} \simeq {\mathcal T}_K^{\rm ord} \plus \F_2^N$ 
\cite[Theorem III.4.1.5]{Gra0}, valid under Leopoldt's conjecture for $p=2$. 

\begin{theorem}
We have, in $K=\Q(N)$, for any odd $N > 1$ and $p=2$:
\begin{eqnarray*}
{\rm rk}_2 ({\mathcal T}_K^{\rm ord}) & = &
{\rm rk}_2 \big[{\mathcal C}_K^{\rm res}/cl_K^{\rm res} (S) \big] 
+ \order S - 1, \\ 
{\rm rk}_2 ({\mathcal T}_K^{\rm ord}) & = &
{\rm rk}_2 \big[{\mathcal C}_K^{\rm ord}/cl_K^{\rm ord} (S) \big] 
+ \order S -1, \\
{\rm rk}_2 ({\mathcal C}_K^{(4) \, {\rm ord}}) & = &
{\rm rk}_2 ({\mathcal C}_K^{\rm res}),  
\end{eqnarray*}
Thus, ${\mathcal T}_K^{\rm ord} =1$ 
(i.e., ${\mathcal C}_K^{\rm ord} = {\mathcal R}_K^{\rm ord} 
= {\mathcal W}_K^{\rm ord} =1$)
if and only if $2$ is inert in $K/\Q$ and ${\mathcal C}_K^{\rm ord} = 1$
(or $2$ is inert and ${\mathcal C}_K^{\rm res} = 1$). 
\end{theorem}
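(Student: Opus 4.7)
The argument will be essentially formal, tying together three ingredients already at our disposal: (i) the reflection theorem of \cite[Proposition III.4.2.2, \S\,II.5.4.9.2]{Gra0} specialized to the trivial character with $\mathfrak{m}^* = (4)$ (forced by the fact that $K\supset\mu_2$ but $K_{\mathfrak p}\not\supset\mu_4$ for every ${\mathfrak p}\mid 2$, as noted in the proof of Lemma \ref{lemmaW}); (ii) the isomorphism ${\mathcal T}_K^{\rm res}\simeq {\mathcal T}_K^{\rm ord}\oplus \F_2^N$ of \cite[Theorem III.4.1.5]{Gra0}; (iii) the two short exact sequences of ray class groups relating the various flavors of $\mathcal{C}_K$ to each other and to the ``sign at infinity'' and ``residues modulo $4$'' quotients. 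The first thing I would do is collect these three sequences in one diagram so that the bookkeeping on $2$-ranks becomes mechanical.

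Next, I would derive the third formula ${\rm rk}_2({\mathcal C}_K^{(4)\,{\rm ord}})={\rm rk}_2({\mathcal C}_K^{\rm res})$ from the two surjections
\[
1\to (\mathcal{O}_K/4)^\times/\mathrm{im}(E_K)\to \mathcal{C}_K^{(4)\,{\rm ord}}\to \mathcal{C}_K^{\rm ord}\to 1,\qquad
1\to \{\pm 1\}^N/\langle -1\rangle\to \mathcal{C}_K^{\rm res}\to \mathcal{C}_K^{\rm ord}\to 1,
\]
and from the local-square computation showing that, at each ${\mathfrak p}\mid 2$, the $2$-rank of $(\mathcal{O}_{K_{\mathfrak p}}/4)^\times$ matches the rank of the signs contributed by the real places of $K$ lying over ${\mathfrak p}$. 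Here the sign information from $E_K$ (killed in the passage ``res $\to$ ord'') and the local squares modulo units (killed in the passage ``$(4)$-ord $\to$ ord'') are parallel, and balance exactly once one quotients by the global unit $-1$.

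Then I would plug (ii) and formula (iii) into the reflection identity, which in its basic form reads
${\rm rk}_2({\mathcal T}_K^{\rm res}) = {\rm rk}_2({\mathcal C}_K^{(4)\,{\rm ord}})$. Combining with ${\rm rk}_2({\mathcal T}_K^{\rm res})={\rm rk}_2({\mathcal T}_K^{\rm ord})+N$ and with the evident exact sequence
\[
\bigoplus_{{\mathfrak p}\in S}\Z/2\Z\to {\mathcal C}_K^{\rm res}/({\mathcal C}_K^{\rm res})^2 \to \big({\mathcal C}_K^{\rm res}/cl_K^{\rm res}(S)\big)/\big(\cdots\big)^2\to 0
\]
(and similarly in the ordinary sense) will yield both formula~1 and formula~2, once I check that the ``absorption'' of the classes in $S$ into the sign/unit part produces the correction $N-|S|+1$ needed to turn $+N$ into $+(|S|-1)$. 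This discrete bookkeeping, i.e.\ tracking how the image of the ${\mathfrak p}\mid 2$ interacts with the $\F_2^N$ factor and with $cl_K(S)$ in both the restricted and ordinary pictures, is the step I expect to be the main obstacle and where the proof must be written out carefully.

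Finally, the characterization of $\mathcal{T}_K^{\rm ord}=1$ falls out formally: the vanishing of the right-hand side of formula~1 (or~2) is equivalent to $|S|=1$ together with the triviality of ${\mathcal C}_K^{\rm res}/cl_K^{\rm res}(S)$ (resp.\ ordinary); but when $2$ is inert, $cl_K(S)$ is trivial in the $2$-part (since the inert prime above $2$ is principal in the $2$-part as soon as $\mathcal{C}_K^{\rm ord}=1$, a consequence of Chevalley's formula in $\Q(\ell^n)/\Q$ recalled in \S\,\ref{algebra}), so ${\mathcal C}_K^{\rm res}/cl_K^{\rm res}(S)={\mathcal C}_K^{\rm res}$ and similarly ${\mathcal C}_K^{\rm ord}/cl_K^{\rm ord}(S)={\mathcal C}_K^{\rm ord}$. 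The two equivalent reformulations stated in the theorem therefore follow, and the final equality $\mathcal{T}_K^{\rm ord}=\mathcal{C}_K^{\rm ord}\cdot \mathcal{R}_K^{\rm ord}\cdot \mathcal{W}_K^{\rm ord}$ (Lemma \ref{lemmaW} and \S\,\ref{definitions}) ensures that each of the three factors also collapses.
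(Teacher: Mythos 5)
The paper itself gives no proof of this theorem: it is quoted verbatim as a specialization (trivial character, ${\mathfrak m}^*=(4)$, $S=\{{\mathfrak p}\mid 2\}$) of the reflection theorem of \cite[Proposition III.4.2.2, \S\,II.5.4.9.2]{Gra0} together with ${\mathcal T}_K^{\rm res}\simeq{\mathcal T}_K^{\rm ord}\oplus\F_2^N$ from \cite[Theorem III.4.1.5]{Gra0}. So your attempt to reconstruct a self-contained derivation is going beyond what the paper does, and it contains a genuine gap at its pivot. The identity you call the ``basic form'' of the reflection theorem, ${\rm rk}_2({\mathcal T}_K^{\rm res})={\rm rk}_2({\mathcal C}_K^{(4)\,{\rm ord}})$, is false: combined with your formula (iii) and with item (ii) it would give ${\rm rk}_2({\mathcal T}_K^{\rm ord})={\rm rk}_2({\mathcal C}_K^{\rm res})-N$, which is negative already for $K=\Q(3)$ (where $2$ is inert, ${\mathcal C}_K^{\rm res}=1$, so the theorem gives ${\rm rk}_2({\mathcal T}_K^{\rm ord})=0$ while your chain gives $-3$). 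The correct input is the rank formula recalled in the introduction, ${\rm rk}_2({\mathcal T}_K)={\rm rk}_2(V_K/K^{\times 2})+\order S-1$, followed by a Kummer-theoretic identification of $V_K/K^{\times 2}$ with the $2$-torsion of the relevant $S$-ray class group; your version short-circuits exactly that step.

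The second problem is that formula (iii) is not the formal consequence of the two ray-class exact sequences that you present it as. Those sequences only show that the two ambient groups, $(\mathcal O_K/4)^\times\otimes\F_2$ and the sign group $\F_2^N$, both have rank $N$; the assertion that the images of $E_K$ in them have equal rank (your ``balance exactly'') is precisely the Armitage--Fr\"ohlich/reflection duality one is trying to prove, and it requires the product formula for quadratic Hilbert symbols (or the global Kummer duality underlying \cite[Theorem II.5.4.5]{Gra0}), not bookkeeping. You yourself flag the subsequent rank accounting as ``the main obstacle'' and leave it unexecuted, so the proposal as written does not close. A minor but real simplification for the last paragraph: when $2$ is inert the unique prime of $S$ is $(2)$ itself, which is principal and generated by a totally positive element, so $cl_K^{\rm res}(S)=cl_K^{\rm ord}(S)=1$ unconditionally; no appeal to Chevalley's formula or to ${\mathcal C}_K^{\rm ord}=1$ is needed there.
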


\begin{remark} \label{rema1093}
Let $K=\Q(N)$, $N$ odd. If $p=2$ is inert in $K$,
${\rm rk}_2({\mathcal T}_K^{\rm ord})=$ ${\rm rk}_2( {\mathcal C}_K^{\rm res}) 
= {\rm rk}_2( {\mathcal C}_K^{\rm ord})$ (second and third formulas). 
This does not apply if $N$ is divisible by $\ell = 1093$, $3511$ 
and primes $\ell$ such that $2^{\ell-1} \equiv 1 \pmod {\ell^2}$.
For $\ell = 1093$ and from
${\rm rk}_2({\mathcal T}_K^{\rm ord}) = 
{\rm rk}_2({\mathcal C}_K^{\rm ord}/cl_K^{\rm ord} (S) ) + 1092$, 
we have verified that the norm of 
$(1-\psi(3)) \cdot \hbox{$\frac{1}{2}$}L_p(1,\psi)$
is exactly $2^{1092}$; this means that $2$ annihilates 
${\mathcal T}_K^{\rm ord}$, whence that ${\mathcal C}_K^{S\, \rm ord} =1$ 
and that ${\mathcal T}_K^{\rm ord} \simeq (\Z/2\Z)^{1092}$.
%This only proves that ${\mathcal C}_K$ is generated 
%by the classes of the $1093$ prime ideals above $2$ in $K$.
\end{remark}

\subsection{Case $p \ne 2$}
The application of the reflection theorem needs to consider
$L=K \Q(\mu_p)$ for $K=\Q(N)$, $p \nmid N$, with the group $\G(L/K)$.  
Let $\omega_p =: \omega$ be the Teichm\"uller character. 
We denote by ${\rm rk}_\chi(A)$ the $\F_p$-dimension of the 
$\chi$-component of $A/A^p$, $\chi \in \langle \omega \rangle$; 
whence ${\rm rk}_1(A) = {\rm rk}_p(A)$. 

Since $p$ is totally ramified 
in $L/K$ one may denote $S$, by abuse, the sets of $p$-places 
of $K$ and $L$, respectively; we then have $cl_L(S) \simeq cl_K(S)$. 

\begin{theorem}\cite[\S\,II.5.4.2 and Theorem II.5.4.5]{Gra0} \label{ranks}
Let $p>2$ be a prime not dividing $N$ and let $K := \Q(N)$, $L := K(\mu_p)$; 
put ${\mathfrak P}^* = (p)\! \cdot\! (1-\zeta_p)$ in~$L$, 
where ${\mathcal C}_L^{{\mathfrak P}^*}$ is
the ray class group of modulus ${\mathfrak P}^*$. We have:
\begin{eqnarray*}
{\rm rk}_p ({\mathcal T}_K) & = &
{\rm rk}_\omega ({\mathcal C}_L), \\ 
{\rm rk}_p \big[ {\mathcal C}_K/cl_K (S_K)  \big] & = &
{\rm rk}_\omega ({\mathcal T}_L^{}) +1 - \order S_K, \\
{\rm rk}_p ({\mathcal C}_K) & = &
{\rm rk}_\omega ({\mathcal C}_L^{{\mathfrak P}^*}) + 1 - N,  
\end{eqnarray*}
\end{theorem}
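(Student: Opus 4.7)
The three identities are specific cases of the general reflection principle (Spiegelungssatz) of \cite[\S II.5.4]{Gra0}, applied to the pair $(K, L)$ with Galois group $\Delta := \G(L/K)$ cyclic of order $p-1$. Since $|\Delta|$ is coprime to $p$, every $\F_p[\Delta]$-module splits canonically into its $\chi$-isotypic components for $\chi \in \widehat \Delta$, and Kummer theory over $L$ (where $\mu_p$ lives) yields a perfect pairing identifying the $\chi$-component of a Kummer radical in $L^\times/L^{\times p}$ with the $\omega \chi^{-1}$-component of the Galois group of the associated elementary abelian $p$-extension of $L$. Taking $\chi = 1$ corresponds to radicals that descend to $K^\times/K^{\times p}$, and this is what forces the $\omega$-component to appear on each right-hand side.

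My plan for the first formula is to invoke the $p$-rank identity \cite[Corollary III.4.2.3]{Gra0} already recalled in the Introduction: it gives ${\rm rk}_p({\mathcal T}_K) = {\rm rk}_p(V_K/K^{\times p})$, since the correction $\sum_{v \in S}\delta_v - \delta_K$ vanishes ($K$ is totally real and $p > 2$, so $\mu_p \not\subset K_v$ for $v \mid p$ and $\mu_p \not\subset K$). The inclusion $V_K/K^{\times p} \hookrightarrow V_L/L^{\times p}$ identifies this radical with the trivial-character part. The class map $\alpha \mapsto [\mathfrak a]$, with $(\alpha) = \mathfrak a^p$, is $\Delta$-equivariant up to the $\omega^{-1}$-twist built into Kummer theory; its kernel (global units modulo $p$-th powers) contributes nothing on the $\chi = 1$ component for $K$ real and $p > 2$. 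Hence the trivial-component radical, whose $\F_p$-dimension equals ${\rm rk}_p({\mathcal T}_K)$, matches the $\omega$-component of ${\mathcal C}_L[p]$, yielding the first formula.

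For formulas 2 and 3 the same template applies, with the radical $V_L$ replaced respectively by its $S$-decorated variant (elements of $V_L$ that are local $p$-th powers at all $v \in S$, which by duality corresponds to complete decomposition at $S$) and by the larger radical allowing ramification at the places above $\mathfrak{P}^*$. The main obstacle is the computation of the correction terms $1 - \# S_K$ and $1 - N$: these come from the image of the global units of $L$ in the relevant radical, whose $\omega$-component one must control. The tool is Dirichlet's theorem combined with the $\Delta$-module structure of $E_L \otimes \F_p$, giving an $\F_p$-rank of $\# S_K - 1$ for the $\omega$-component in the $S$-unit setting of formula~2 and $N - 1$ in the ordinary unit setting of formula~3 (the latter because $\omega$ is the unique character of $\Delta$ with $\omega(-1) = -1$ and $[K : \Q] = N$). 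Transferring this unit contribution across the exact sequence
$$
1 \to \iota(E_L^\bullet)/E_L^{\bullet\,p} \to V_L^\bullet/L^{\times p} \to {\mathcal C}_L^\bullet[p] \to 1
$$
(with $\bullet$ denoting the appropriate $S$-decorated or ray-class variant) produces the stated shifts. Leopoldt's conjecture, which holds for the abelian field $L$, is needed to ensure exactness of this sequence throughout the argument.
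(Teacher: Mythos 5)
The paper gives no proof of Theorem \ref{ranks}; it is quoted from \cite[\S\,II.5.4.2, Theorem II.5.4.5]{Gra0}, so the only meaningful comparison is with the reflection machinery of that reference. Your overall strategy --- $\chi$-decomposition for $\Delta:=\G(L/K)$, Kummer duality over $L$ with its $\omega\chi^{-1}$-twist, and the input ${\rm rk}_p({\mathcal T}_K)={\rm rk}_p(V_K/K^{\times p})$ --- is the right one and is indeed what the reference does. But two of your steps, as written, would fail. For the first formula you correctly identify $V_K/K^{\times p}$ with $(V_L/L^{\times p})^{\chi=1}$, but you then pass to ${\mathcal C}_L$ via ``the class map $\alpha\mapsto[{\mathfrak a}]$, equivariant up to the $\omega^{-1}$-twist.'' That map is honestly $\Delta$-equivariant, with no twist: it can only send the $\chi=1$ component to the $\chi=1$ component of ${\mathcal C}_L[p]$, never to the $\omega$-component. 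The twist lives exclusively in the Kummer pairing, and the correct route is to note that $L(\sqrt[p]{V_L})$ is the maximal elementary unramified extension of $L$ split at $S$, so its Galois group is $({\mathcal C}_L/cl_L(S))\otimes\F_p$ by Artin; Kummer duality then gives ${\rm rk}_1(V_L)={\rm rk}_\omega({\mathcal C}_L/cl_L(S))={\rm rk}_\omega({\mathcal C}_L)$, the last equality because $\Delta$ fixes each prime above $p$, so $cl_L(S)$ has trivial $\omega$-part. Worse, your claim that the kernel of the class map ``contributes nothing on the $\chi=1$ component'' is false: that kernel consists exactly of the units of $K$ that are local $p$th powers at every $v\in S$, i.e.\ it measures the regulator ${\mathcal R}_K$, whose non-triviality is the whole subject of the paper (for $K=\Q(3)$, $p=7$, one has ${\mathcal C}_K=1$ and $V_K/K^{\times 7}$ one-dimensional, entirely inside that kernel). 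If your claim held, the first formula would collapse to ${\rm rk}_p({\mathcal T}_K)={\rm rk}_p({\mathcal C}_K/cl_K(S))$, which is wrong.

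The second gap is the provenance of the corrections $1-\order S_K$ and $1-N$. You attribute them to Dirichlet's theorem and the $\Delta$-structure of $E_L\otimes\F_p$, with $\omega$-rank $\order S_K-1$ (resp.\ $N-1$). But the $\omega$-component of $E_L\otimes\Q$, and of $E_L^S\otimes\Q$, is zero: $L$ is CM, its units are those of $L^+$ up to roots of unity, $\omega$ is odd, and $\Delta$ fixes each place of $S_L$; Dirichlet contributes only the single $\omega$-line of $\mu_p$. The term $\order S_K-1$ in the second formula actually comes from ${\mathcal W}_L=\big[\oplus_{w\mid p}\,\mu_p(L_w)\big]/\mu_p(L)$, which is $\omega$-isotypic of exactly that dimension and enters through the rank formula for ${\mathcal T}_L$ applied to $L\ni\mu_p$. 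The term $N-1$ in the third formula comes from the $\omega$-part of the local ray $\prod_{w\mid p}(1+{\mathfrak P}_w)/(1+{\mathfrak P}_w^{p})$ (on which $\Delta$ acts on ${\mathfrak P}_w^i/{\mathfrak P}_w^{i+1}$ by $\omega^i$), of total $\F_p$-dimension $\sum_{w\mid p}f_w=N$, reduced by the one $\omega$-line coming from $\mu_p\subset E_L$. These local computations are the actual content of the theorem, and they are precisely what your sketch replaces by an incorrect global-unit count.
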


\subsection{Illustration of reflection theorem for $N=\ell^n$, $p \ne 2$}
The parameter ${\sf \order zp}$ gives the number $\ell^n\,(p-1)/2+1$
of $\Z_p$-extensions of $L$, but the cyclotomic extension of $\Q$
does not intervene because the conductor of $\Q(p)$ is $p^2$ larger 
that ${\mathfrak P}^*$; thus, ${\sf \order zp -1 - rk(Hp)}$ is the $p$-rank of the 
torsion part, where ${\sf Hp}$ is the ray class group ${\mathcal C}_L^{{\mathfrak P}^*}$
(e.g., $\ell=2$, $p=11, 13, 19$).\par
\footnotesize
\begin{verbatim}
PROGRAM X. ILLUSTRATION OF FORMULA (8)
{el=2;for(n=1,3,print("el=",el," n=",n);if(el==2,P=x;for(j=1,n,P=P^2-2));
if(el!=2,P=polsubcyclo(el^(n+1),el^n));forprime(p=2,23,if(p==el,next);
Q=polcompositum(P,polcyclo(p))[1];L=bnfinit(Q,1);
r=el^n*(p-1)/2+1;A=idealfactor(L,p);d=matsize(A)[1];a=1;
for(k=1,d,a=idealmul(L,a,component(A,1)[k]));ap=idealpow(L,a,p);
Lp=bnrinit(L,ap);Hp=Lp.cyc;LT=List;e=matsize(Hp)[2];
R=0;for(k=1,e,c=Hp[e-k+1];w=valuation(c,p);if(w>0,R=R+1;
listinsert(LT,p^w,1)));print("p=",p," rk(Hp)=",R," #zp=",r," Hp=",LT)))}
el=2  n=1
p=3  rk(Hp)=2  #zp=3  Hp=[3,3]
p=5  rk(Hp)=4  #zp=5  Hp=[5,5,5,5]
p=7  rk(Hp)=6  #zp=7  Hp=[7,7,7,7,7,7]
p=11 rk(Hp)=11 #zp=11 Hp=[121,11,11,11,11,11,11,11,11,11,11]
p=13 rk(Hp)=13 #zp=13 Hp=[169,13,13,13,13,13,13,13,13,13,13,13,13]
p=17 rk(Hp)=16 #zp=17 Hp=[17,17,17,17,17,17,17,17,17,17,17,17,17,17,17,17]
p=19 rk(Hp)=19 #zp=19 Hp=[361,19,19,19,19,19,19,19,19,19,19,19,19,19,19,
                                                              19,19,19,19]
p=23 rk(Hp)=22 #zp=23 Hp=[23,23,23,23,23,23,23,23,23,23,23,23,23,23,23,
                                                     23,23,23,23,23,23,23]
el=2  n=2
p=3  rk(Hp)=4  #zp=5  Hp=[3,3,3,3]
p=5  rk(Hp)=9  #zp=9  Hp=[25,5,5,5,5,5,5,5,5]
p=7  rk(Hp)=12 #zp=13 Hp=[7,7,7,7,7,7,7,7,7,7,7,7]
p=11 rk(Hp)=21 #zp=21 Hp=[121,11,11,11,11,11,11,11,11,11,11,11,11,11,11,
                                                        11,11,11,11,11,11]
p=13 rk(Hp)=26 #zp=25 Hp=[169,169,13,13,13,13,13,13,13,13,13,13,13,13,13,
                                         13,13,13,13,13,13,13,13,13,13,13]
el=3  n=1
p=5  rk(Hp)=6  #zp=7  Hp=[5,5,5,5,5,5]
p=7  rk(Hp)=10 #zp=10 Hp=[49,7,7,7,7,7,7,7,7,7]
p=11 rk(Hp)=15 #zp=16 Hp=[11,11,11,11,11,11,11,11,11,11,11,11,11,11,11]
p=13 rk(Hp)=18 #zp=19 Hp=[13,13,13,13,13,13,13,13,13,13,13,13,13,13,13,
                                                               13,13,13]
p=17 rk(Hp)=24 #zp=25 Hp=[17,17,17,17,17,17,17,17,17,17,17,17,
                                    17,17,17,17,17,17,17,17,17,17,17,17]
p=19 rk(Hp)=27 #zp=28 Hp=[19,19,19,19,19,19,19,19,19,19,19,19,19,19,19,
                                    19,19,19,19,19,19,19,19,19,19,19,19]
el=3  n=2
p=5  rk(Hp)=18 #zp=19 Hp=[5,5,5,5,5,5,5,5,5,5,5,5,5,5,5,5,5,5]
p=7  rk(Hp)=28 #zp=28 Hp=[49,7,7,7,7,7,7,7,7,7,7,7,7,7,7,7,7,7,7,7,7,7,
                                                            7,7,7,7,7,7]
el=5 n=1
p=3  rk(Hp)=5  #zp=6  Hp=[3,3,3,3,3]
p=7  rk(Hp)=15 #zp=16 Hp=[7,7,7,7,7,7,7,7,7,7,7,7,7,7,7]
\end{verbatim}
\normalsize

\subsection{Probabilistic analysis for $p>2$}
Consider the following reflection theorem \cite[II.5.4.9.2, formula (4)]{Gra0}:

\begin{proposition} Let $L = K(\mu_p)$, $p \nmid N$; we have
${\rm rk}_p({\mathcal C}_K) = {\rm rk}_p(Y_{L,{\rm prim}}^\omega)$, where
$Y_{L,{\rm prim}}^\omega \subseteq Y_L^\omega :=\big( \{\alpha \in L^\times,
\, (\alpha) = {\mathfrak A}^p\} \! \cdot \! L^{\times p}/ L^{\times p} \big)^\omega$
is the $\omega$-component of the subset of $p$-primary elements $\alpha$ 
(i.e., such that $L(\sqrt[p]{\alpha})/L$ is unramified of degree $p$
and decomposed into a cyclic 
extension of $K$ in $H_K^{\rm nr}$). Thus ${\rm rk}_p({\mathcal C}_K) = 
{\rm rk}_p({\mathcal C}_L^\omega)$ or ${\rm rk}_p({\mathcal C}_L^\omega)-1$.
\end{proposition}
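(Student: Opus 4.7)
The proof is a Scholz-type reflection argument combining Kummer theory over $L=K(\mu_p)$ with class field theory, tracking the action of $\Delta := \G(L/K) \simeq (\Z/p\Z)^\times$, which acts on $\mu_p$ via the Teichm\"uller character $\omega$.

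\emph{Step 1 (Kummer interpretation of $Y_{L,{\rm prim}}$).} Since $\mu_p \subset L$, Kummer theory sets up a bijection between finite subgroups $B \subset L^\times/L^{\times p}$ and finite elementary abelian $p$-extensions $M/L$ via $M=L(\sqrt[p]{B})$. The condition $(\alpha)=\mathfrak{A}^p$ is exactly what makes $L(\sqrt[p]{\alpha})/L$ unramified outside~$p$, and the further $p$-primary condition at each ${\mathfrak P}\mid p$ ensures the extension is unramified at those places too. Hence $Y_{L,{\rm prim}}$ is identified with the Kummer radical of the maximal elementary abelian unramified $p$-extension of $L$.

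\emph{Step 2 (Character matching and descent to $K$).} The Kummer pairing
$B \times \G\bigl(L(\sqrt[p]{B})/L\bigr) \to \mu_p$
is $\Delta$-equivariant, and $\Delta$ acts on $\mu_p$ through $\omega$; consequently a $\chi$-isotypic radical $B^\chi$ corresponds to a Galois group on which $\Delta$ acts via $\omega \chi^{-1}$. Applied to $\chi=\omega$, this says that $Y_{L,{\rm prim}}^\omega$ corresponds precisely to the $\Delta$-invariant (i.e.\ trivial-character) piece of $\G(H_L^{\rm nr}/L)/p \simeq {\mathcal C}_L/p{\mathcal C}_L$, which is exactly the subgroup generated by the extensions descending to cyclic unramified $p$-extensions of $K$. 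Since $[L:K]=p-1$ is prime to $p$, the injective transfer ${\mathcal C}_K \to {\mathcal C}_L$ identifies this trivial $\Delta$-component with ${\mathcal C}_K/p{\mathcal C}_K$, giving the first equality
$${\rm rk}_p({\mathcal C}_K) = {\rm rk}_p(Y_{L,{\rm prim}}^\omega).$$

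\emph{Step 3 (Comparison with ${\mathcal C}_L^\omega$).} The tautological sequence
$$1 \to (E_L\cdot L^{\times p})/L^{\times p} \too Y_L \too {\mathcal C}_L[p] \to 0,\quad \alpha \mapsto {\rm cl}({\mathfrak A}),$$
is $\Delta$-equivariant, and taking $\omega$-components yields
$${\rm rk}_p(Y_L^\omega) = {\rm rk}_p({\mathcal C}_L^\omega) + \dim_{\F_p}\bigl((E_L/E_L^p)^\omega\bigr).$$
For our totally real base $K$, the only $\omega$-isotypic contribution to $E_L/E_L^p$ is the class of $\zeta_p$ (the free part of $E_L$ has no $\omega$-component, as it descends via norm/transfer to $E_K$ up to finite index prime to $p$). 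Thus $(E_L/E_L^p)^\omega$ has $\F_p$-dimension one, and ${\rm rk}_p(Y_L^\omega) = {\rm rk}_p({\mathcal C}_L^\omega)+1$. The aggregate of local $p$-primarity constraints at the places above $p$ cuts $Y_L^\omega$ to $Y_{L,{\rm prim}}^\omega$ by at most a single $\F_p$-dimension on this component, yielding the dichotomy
$${\rm rk}_p({\mathcal C}_K) = {\rm rk}_p({\mathcal C}_L^\omega) \ \ \hbox{or}\ \ {\rm rk}_p({\mathcal C}_L^\omega)-1.$$

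\emph{Main obstacle.} The delicate point is the codimension count in Step~3: one must verify that the $p$-primarity conditions at all ${\mathfrak P}\mid p$ together contribute at most one $\F_p$-dimension on the $\omega$-component. This is where a careful analysis of the semi-local module $\bigoplus_{{\mathfrak P}\mid p} U_{\mathfrak P}/U_{\mathfrak P}^p$ as an $\F_p[\Delta]$-module intervenes, the dichotomy corresponding to whether $\zeta_p$ is already a local $p$-th power at the places above~$p$ or imposes a genuine constraint.
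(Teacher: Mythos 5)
Your Steps 1--2 are a reasonable (if compressed) re-derivation, via Kummer duality and the prime-to-$p$ descent ${\mathcal C}_L^{\G(L/K)} \simeq {\mathcal C}_K$, of the identity ${\rm rk}_p({\mathcal C}_K) = {\rm rk}_p(Y_{L,{\rm prim}}^\omega)$, which the paper instead obtains by quoting the general reflection formula ${\rm rk}_p({\mathcal C}_K) = {\rm rk}_p({\mathcal C}_L^\omega) + 1 - {\rm rk}_p(Y_L^\omega) + {\rm rk}_p(Y_{L,{\rm prim}}^\omega)$ from Gras's book and then showing that the middle terms cancel. Your Step 3 computation ${\rm rk}_p(Y_L^\omega) = {\rm rk}_p({\mathcal C}_L^\omega) + 1$, with the extra dimension carried by $\zeta_p$, is exactly the paper's decomposition $Y_L^\omega = \langle \alpha_1, \ldots, \alpha_r\rangle \cup \{\zeta_p\}$. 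Up to that point the two arguments coincide in substance.

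The final step, however, contains a genuine gap: the codimension count is inconsistent with the conclusion. If the primarity conditions cut ``at most a single $\F_p$-dimension'' out of $Y_L^\omega$, whose dimension you have just computed to be $r+1$ with $r := {\rm rk}_p({\mathcal C}_L^\omega)$, you get ${\rm rk}_p(Y_{L,{\rm prim}}^\omega) \in \{r,\, r+1\}$, hence ${\rm rk}_p({\mathcal C}_K) = r$ or $r+1$ --- not the asserted $r$ or $r-1$. What is actually needed is that the cut is \emph{at least} one (giving the upper bound $\le r$; this is precisely the role of the observation that $\zeta_p$ is never primary) and \emph{at most} two (giving the lower bound $\ge r-1$; this is the delicate point, namely that on the non-unit part $\langle \alpha_1,\ldots,\alpha_r\rangle$ the primarity conditions impose at most one further independent condition). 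Moreover, your proposed source of the dichotomy --- whether $\zeta_p$ is already a local $p$th power above $p$ --- is vacuous here: since $p \nmid N$, $p$ is unramified in $K$, so $e(L_{\mathfrak P}/\Q_p) = p-1$, $\mu_{p^2} \not\subset L_{\mathfrak P}$, and $\zeta_p \notin L_{\mathfrak P}^{\times p}$ for every ${\mathfrak P} \mid p$; thus $\zeta_p$ always imposes a genuine constraint, and the dichotomy is governed instead by the behaviour of the $\alpha_i$ (the paper attributes the ``at most one extra condition'' to the $\omega$-action on the $\alpha_i$, i.e., to the one-dimensionality of the relevant $\omega$-isotypic local obstruction for these virtual units). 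As written, your argument yields the upper bound only after the count is repaired, and does not establish the lower bound at all.
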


\begin{proof}
We have, from the general formula (loc. cit.): 
$${\rm rk}_p({\mathcal C}_K) = {\rm rk}_p({\mathcal C}_L^\omega) + 1 
- {\rm rk}_p(Y_L^\omega) +  {\rm rk}_p(Y_{L,{\rm prim}}^\omega). $$ 

Put $Y_L^\omega = \{\alpha_1, \ldots, \alpha_r\} \cup \{\zeta_p\}$
modulo $L^{\times p}$, the $\alpha_i$ being non-units and independent 
modulo $L^{\times p}$, and where $r$ is the $p$-rank of 
${\mathcal C}_L^\omega$. Since $\zeta_p$ is not $p$-primary, one gets
${\rm rk}_p({\mathcal C}_K) = {\rm rk}_p(Y_{L,{\rm prim}}^\omega)=
{\rm rk}_p(\langle \alpha_1, \ldots, \alpha_r \rangle_{\rm prim})$. 
Due to the $p$-adic action of $\omega$ on the $\alpha_i$, it is
immediate to deduce the last claim.
\end{proof}

The condition ${\rm rk}_p({\mathcal C}_K) \geq 1$ is then equivalent
to the existence of a $p$-primary $\alpha \in Y_L^\omega$ such that 
$(\alpha) = {\mathfrak A}^p$, with a non-principal ${\mathfrak A}$.
Program X gives cases where necessarily 
${\rm rk}_p({\mathcal C}_L) =r \geq 1$ (probably $r=1$, otherwise
we should have ${\rm rk}_p({\mathcal C}_K)= r$ or $ r - 1 \ne 0$);
one computes easily that the probability to have $\alpha$
$p$-primary (in a standard point of view) is $\frac{1}{p}$.

\smallskip
The computation of the class group of $L$ is rapidly out of reach and 
we have only been able to compute ${\mathcal C}_L$ for
$N = 3$ with $p=7$ giving ${\mathcal C}_L \simeq \Z/7\Z$;
we do not know $\alpha$ so that we cannot verify that it is not
$7$-primary (which is indeed the case since we know, from 
\S\,\ref{programR}, that ${\mathcal R}_K$ is not a $7$-adic unit).

\section{The $p$-torsion groups in the cyclotomic $\wh \Z$-extension $\wh \Q$}

Since there exist many fields $k=\Q(\ell^n)$ with non-trivial  
$p$-torsion groups ${\mathcal T}_k$, these groups remain subgroups
of ${\mathcal T}_K$ for any $K=\Q(N)$, extension of $k$ in $\wh \Q$, 
$N= \ell_1^{n_1} \cdots \ell_t^{n_t}$, 
and give larger groups. So we have essentially to compute 
${\mathcal T}_K^*$ (the relative submodule), 
product of the components ${\mathcal T}_K^{\theta}$
for $p$-adic characters $\theta$ given by the characters $\psi$
of order $N$ of $K$ (see \S\,\ref{Gmodules}).

\subsection{General program}\label{N}
The following completely general program uses the method of $p$-adic 
measure associated to the computation of Stickelberger's element for the 
composite conductor $f := p f_N$ of $K \Q(\mu_p)$, or $f=4 f_N$ if $p=2$; the Galois group 
$\G(\Q(\mu_{f})/\Q)$ is described by the program as direct product deduced 
from that of $(\Z/{f}\Z)^\times$ as usual, using a half system of representatives 
$\Sigma$ distinct from $[1, f/2]$ since it is not efficient to determine the Artin
automorphism of a representative $a \in [1, f/2]$.
All primes $p$ are tested, which will give some cases of annihilators
of degree $>1$ (hence primes $p$ of residue degree $>1$ in $\Q(\mu_N)$).

\smallskip
The choice of $c$, defining the multiplier 
$1 - c \cdot \big(\frac{\,\Q(\mu_f)}{c} \big)^{-1}$, gives some difficulty
for $N$ even since for $N$ odd, $c=2$ is always suitable
(except in the rare known cases where $2$ totally splits in $\Q(N)$,
giving integers $N$ out of reach). But $c$ must be chosen for each 
$p$ so that $\psi(c) \ne 1$, $\psi$ of order $N$, which increases 
dramatically the computing time since the Artin symbol of $c$ 
is not immediate; so, in the program, we only assume $c$ 
prime to $f$. Doing this, the case $\psi(c)=1$ may occur, 
giving $(1-\psi(c)) \cdot \hbox{$\frac{1}{2}$} L_p(1,\psi) = 0$
in the relation of Lemma \ref{psi}, while
$L_p(1,\psi) \ne 0$; but $\psi(c)$ is a $N$th root of unity and,
by assumption, $p\nmid N$, so $1-\psi(c)$ non-invertible 
modulo $p$ is equivalent to $\psi(c)=1$;
a unique example occurs for $N=10$ (line ${\sf {}^{\ast}}$ 
of the table) to be dropped since a direct verification via Program I
(\S\,\ref{T1}) does not give any solution $p$ in the selected interval.

\smallskip
The $p$-adic characters $\theta$ are defined using a factorization
modulo $p$ of the $N$th cyclotomic polynomial ${\sf Q}$ in polynomials
${\sf Rp[k]}$ in the list ${\sf Rp}$; then the program tests the condition 
${\sf S(x) \equiv 0 \pmod {Rp[k]}}$, where  ${\sf S(x)}$ represents 
${\mathcal A}'^c_K$ in the group algebra $\Z_p[x]$, ${\sf x}$ generating 
the Galois group.

\begin{lemma} \label{half}
When $\ell_1 = 2 \mid N$, let $f =: 2^{n_1+2} \cdot 
\ell_2^{n_2+1} \cdots  \ell_t^{n_t+1} \cdot p =: 2^{n_1+2}\cdot f'$ be the 
conductor of $K(\mu_p)$. One can neglect, in the summation over 
$a \in [1, f]$ defining ${\mathcal A}^c_{\Q(\mu_f)} = 
(1+s_{-1}) \,{\mathcal A}'^c_{\Q(\mu_f)}$, the component 
$\G(\Q(\mu_f)/k)$, where $k=\Q(2^{n_1})\Q(\mu_{f'})$.
When $N$ is odd one can use the representatives of
$\G(\Q(\mu_{\ell_1^{n_1+1}})/\Q)$ modulo its complex conjugation.
\end{lemma}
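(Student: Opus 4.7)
My plan is to establish two complementary symmetries: a symmetry of the coefficients of ${\mathcal A}^c_{\Q(\mu_f)}$ under $a \mapsto f - a$, and the triviality of complex conjugation upon restriction to the totally real field $K$. Together these will show that the summation over $a \in [1, f]$ defining ${\mathcal A}^c_{\Q(\mu_f)}$ can be replaced by a summation over any subset $R \subset (\Z/f\Z)^\times$ with $R \sqcup (-R) = (\Z/f\Z)^\times$, and in particular over a set of representatives of $(\Z/f\Z)^\times/\G(\Q(\mu_f)/k)$.

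First I would make explicit the formula
\[ {\mathcal A}^c_{\Q(\mu_f)} \equiv \sm_{a=1}^{f} c_a \,\Big(\ffrac{\Q(\mu_f)}{a}\Big) \pmod{qp^m}, \quad c_a := \Big[\lambda_a(c)+\tfrac{1-c}{2}\Big] a^{-1}, \]
by expanding $(1 + s_{-1})\,{\mathcal A}'^c_{\Q(\mu_f)}$ via the change of variable $a \mapsto f - a$ on the $s_{-1}$-translate, in the same way as in the derivation of Lemma \ref{sprime}. The crucial identity is then $c_{f-a} \equiv c_a \pmod{qp^m}$: the sign flip from $\lambda_{f-a}(c) + \tfrac{1-c}{2} = -(\lambda_a(c) + \tfrac{1-c}{2})$ (Lemma \ref{sprime}) is cancelled by $(f-a)^{-1} \equiv -a^{-1} \pmod{qp^m}$, which holds because $qp^m$ divides $f$. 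Since $K$ is totally real, the restriction $s_{-1}|_K$ is trivial, hence $(\frac{\Q(\mu_f)}{f-a})|_K = (\frac{\Q(\mu_f)}{a})|_K$ and $(1 + s_{-1})|_K = 2$. Combining with the coefficient symmetry, for any subset $R$ with $R \sqcup (-R) = (\Z/f\Z)^\times$ one obtains
\[ {\mathcal A}^c_{\Q(\mu_f)}\big|_K \; = \; 2\sm_{a \in R} c_a\,\Big(\ffrac{K}{a}\Big) \; = \; 2\,{\mathcal A}'^c_K, \]
so ${\mathcal A}'^c_K = \sm_{a \in R} c_a\,(\ffrac{K}{a})$ for any such half-system $R$.

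It remains to exhibit $R$ as a set of representatives of $(\Z/f\Z)^\times/\G(\Q(\mu_f)/k)$. For $\ell_1 = 2$, I would use the CRT decomposition $(\Z/f\Z)^\times \simeq (\Z/2^{n_1+2}\Z)^\times \times (\Z/f'\Z)^\times$ together with Gauss's theorem $(\Z/2^{n_1+2}\Z)^\times = \langle -1\rangle \times \langle 5\rangle$ (valid since $n_1 + 2 \geq 3$). Because $\Q(2^{n_1})$ is the unique real subfield of $\Q(\mu_{2^{n_1+2}})$ of degree $2^{n_1}$, the subgroup $\G(\Q(\mu_f)/k)$ is generated by the element $(-1, 1)$ in this decomposition; hence $R := \langle 5\rangle \times (\Z/f'\Z)^\times$ is a set of coset representatives, and $R \sqcup (-R) = (\Z/f\Z)^\times$ because $-(\Z/f'\Z)^\times = (\Z/f'\Z)^\times$. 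The inclusion $K \subseteq k$ follows from $\Q(\ell_i^{n_i}) \subseteq \Q(\mu_{\ell_i^{n_i+1}}) \subseteq \Q(\mu_{f'})$ for all $i \geq 2$. The $N$-odd case is entirely analogous: in the CRT decomposition one replaces the first factor $(\Z/\ell_1^{n_1+1}\Z)^\times$ by a half-system modulo its $\langle -1\rangle$-subgroup (for instance $\{h^u : 1 \leq u \leq \phi(\ell_1^{n_1+1})/2\}$ for a primitive root $h$ of $\ell_1^{n_1+1}$) and retains the remaining CRT components intact. The only delicate point in the whole argument is the coefficient symmetry $c_{f-a} \equiv c_a \pmod{qp^m}$, which crucially uses that $qp^m$ divides $f$ so that $(f-a)^{-1}$ may be replaced by $-a^{-1}$ modulo the exponent of the module on which ${\mathcal A}^c_{\Q(\mu_f)}$ acts.
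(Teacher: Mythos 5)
Your proof is correct and follows essentially the same route as the paper: the coefficient symmetry $c_{f-a}\equiv c_a$ coming from Lemma \ref{sprime} together with the sign flip of $a^{-1}$, the observation that $s_{-1}$ and the generator of $\G(\Q(\mu_f)/k)$ lie in the same coset (so the half-system of coset representatives is also a half-system for $\pm 1$), and the evenness of the characters of the real field $K$. The only cosmetic difference is that you restrict to $K$ and divide by $2$ (legitimate here since $p>2$ in this setting), where the paper instead concludes modulo the ideal $((1-s_{-1}),p)$ and evaluates at the even character $\psi$.
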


\begin{proof}
Let $s$ be the generator of $\G(\Q(\mu_f)/k)$,
$s_{-1}$ the complex conjugation and $\sigma_a:=
\big(\frac{\G(\Q(\mu_f)/\Q}{a} \big)$. 
Let $\Sigma := \{a \in [1, f], \  \sigma_a \in \G(\Q(\mu_f)/\Q(\mu_4)) \}$ 
be the set of representatives in $[1, f]$ used by the program,
so that $[1, f] = \Sigma \cup \ov \Sigma$, where $\ov \Sigma$ represents
$s \!\cdot \! \G(\Q(\mu_f)/\Q(\mu_4))$; then ${\mathcal A}^c_{\Q(\mu_f)} 
= \sum_{a \in \Sigma \cup \ov \Sigma} \lambda_a(c) a^{-1} \sigma_a$.
Then $\Sigma \cup (f-\Sigma)$ is a partition of $[1, f]$ ($s$ and $s_{-1}$ 
project on $\G(\Q(\mu_4)/\Q)$); thus ${\mathcal A}^c_{\Q(\mu_f)} = 
\sum_{a \in \Sigma \cup (f-\Sigma)} \lambda_a(c) a^{-1} \sigma_a$.
Put ${\mathcal B}' := \sum_{a \in \Sigma} \lambda_a(c) a^{-1} \sigma_a$; 
then $(1+s_{-1}) \cdot {\mathcal B}' \equiv {\mathcal A}^c_{\Q(\mu_f)}\!\! \pmod p$
(Lemma \ref{sprime}).
Since ${\mathcal A}^c_{\Q(\mu_f)} = (1+s_{-1})\, {\mathcal A}'^c_{\Q(\mu_f)}$,
one deduces that ${\mathcal A}'^c_{\Q(\mu_f)} \equiv {\mathcal B}'$
modulo the ideal $((1-s_{-1}), p)$; whence the claim since $\psi$ is even.
The case of odd $N$ is obvious.
\end{proof}

Then we shall perform some verifications by using the basic Programs I, II, 
\S\,\ref{nontrivial}, when computation via ${\sf K=bnfinit(P)}$ is possible,
which holds only for small conductors contrary to the present method
allowing computations up to $N=400$ and beyond, with large primes $p$ 
without any more memory. But the standard method gives the structure of 
${\mathcal T}_K$ contrary to the present one, only
giving the annihilator of ${\mathcal T}^*_K$ modulo $p$.

\smallskip
The instruction ${\sf if(Mod(p,N)!=1,next)}$
only considers primes $p$ totally split in $\Q(\mu_N)$ (faster, but 
eliminates cases with annihilators of degree $>1$); the instructions
which follows only consider primes $p$ totally split in $K$
(these parts have been suppressed in the writing but may be restored).
Otherwise, the program tests all primes (case of the table obtained below).
The program may give the polynomial ${\sf P}$ defining $K=\Q(N)$.

\smallskip
To simplify the use by the reader and to reduce the 
execution time, the program considers two cases 
($N$ even and $N$ odd) and deals with 4 possibilities
corresponding to the number ${\sf dim}$ of divisors of $N$
so that the program can work for $2 \leq N \leq 2309$.
The bound ${\sf Bp=floor(2*10^5/N)}$ for $p$ may be modified at will.
The variables ${\sf fN}$ and ${\sf f=p*fN}$ denote the conductors of $K$ 
and $K(\mu_p)$, respectively. \par
\footnotesize
\begin{verbatim}
PROGRAM XI. ANNIHILATORS OF T*, FOR ALL N >1
{BN=500;for(N=2,BN,Bp=floor(2*10^5/N);dim=omega(N);Q=polcyclo(N);
Lq=List;LQ=List;Lh=List;LH=List;LN=List;divN=factor(N);
D=component(divN,1);Exp=component(divN,2);
if(Mod(N,2)==0,delta=1;fN=1;
q1=D[1]^Exp[1];listput(Lq,q1,1);Q1=4*q1;listput(LQ,Q1,1);
N1=N/q1;listput(LN,N1,1);fN=fN*Q1;
h1=Mod(5,Q1);listput(Lh,h1,1);listput(LH,lift(h1),1);
for(i=2,dim,qi=D[i]^Exp[i];listput(Lq,qi,i);Qi=qi*D[i];
listput(LQ,Qi,i);fN=fN*Qi;Ni=N/qi;listput(LN,Ni,i);
hi=znprimroot(Qi);listput(Lh,hi,i);listput(LH,lift(Lh[i]),i)));
if(Mod(N,2)!=0,C=2;delta=2;fN=1;
for(i=1,dim,qi=D[i]^Exp[i];listput(Lq,qi,i);Qi=qi*D[i];
listput(LQ,Qi,i);fN=fN*Qi;Ni=N/qi;listput(LN,Ni,i);
hi=znprimroot(LQ[i]);listput(Lh,hi,i);listput(LH,lift(hi),i)));
\\polynomial of Q(N):
\\if(Mod(N,2)==0,P=x;for(i=1,Exp[1],P=P^2-2);for(i=2,dim,
\\P=polcompositum(P,polsubcyclo(LQ[i],Lq[i]))[1]);print("N=",N," P=",P));
\\if(Mod(N,2)!=0,P=x;for(i=1,dim,
\\P=polcompositum(P,polsubcyclo(LQ[i],Lq[i]))[1]);print("N=",N," P=",P));
if(dim>=1,E1=eulerphi(LQ[1])/delta);if(dim>=2,E2=eulerphi(LQ[2]));
if(dim>=3,E3=eulerphi(LQ[3]));if(dim>=4,E4=eulerphi(LQ[4]));
forprime(p=3,Bp,if(Mod(N,p)==0,next);
\\Specifies the primes p totally split in Q(mu_N):
\\if(Mod(p,N)!=1,next);
\\Specifies the primes p totally split in Q(N):
\\if(Mod(N,2)==0,w1=valuation(p^2-1,2);if(Exp[1]+3>w1,next);
\\for(j=2,dim,wj=valuation(p^(D[j]-1)-1,D[j]);if(Exp[j]+1>wj,next(2))));
\\if(Mod(N,2)!=0,
\\for(j=1,dim,wj=valuation(p^(D[j]-1)-1,D[j]);if(Exp[j]+1>wj,next(2))));
g=znprimroot(p);G=lift(g);gm=g^-1;
f=p*fN;M=f/p;E=lift(Mod((1-G)*p^-1,M));G=G+E*p;g=Mod(G,f);
for(j=1,dim,M=f/LQ[j];E=lift(Mod((1-LH[j])*LQ[j]^-1,M));
H=LH[j]+E*LQ[j];listput(Lh,Mod(H,f),j));
if(Mod(N,2)==0,Cc=2;while(gcd(Cc,f)!=1,Cc=Cc+1);C=Cc;cm=Mod(C,f)^-1);
if(Mod(N,2)!=0,C=2;cm=Mod(C,f)^-1);
F=factor(Q+O(p));R=lift(component(F,1));d=matsize(F)[1];Rp=List;
for(j=1,d,listput(Rp,R[j]*Mod(1,p),j));Qp=Q*Mod(1,p);gg=1;ggm=1;hh=1;S=0;
if(dim==1,
for(u1=1,E1,hh=hh*Lh[1];t=0;
for(v=1,p-1,gg=gg*g;ggm=ggm*gm;a=lift(hh*gg);A=lift(a*cm);
t=t+(A*C-a)/f*ggm);e=lift(Mod(u1*LN[1],N));
S=S+lift(t)*x^e);S=S*Mod(1,p);S=lift(Mod(S,Qp));for(k=1,d,Rk=Rp[k];
if(Mod(S,Rk)==0,print("N=",N," p=",p," annihilator = ",Rk))));
if(dim==2,
for(u1=1,E1,hh=hh*Lh[1];for(u2=1,E2,hh=hh*Lh[2];
t=0;for(v=1,p-1,gg=gg*g;ggm=ggm*gm;
a=lift(hh*gg);A=lift(a*cm);
t=t+(A*C-a)/f*ggm);e=lift(Mod(u1*LN[1]+u2*LN[2],N));
S=S+lift(t)*x^e));S=S*Mod(1,p);S=lift(Mod(S,Qp));for(k=1,d,Rk=Rp[k];
if(Mod(S,Rk)==0,print("N=",N," p=",p," annihilator = ",Rk))));
if(dim==3,
for(u1=1,E1,hh=hh*Lh[1];for(u2=1,E2,hh=hh*Lh[2];
for(u3=1,E3,hh=hh*Lh[3];t=0;
for(v=1,p-1,gg=gg*g;ggm=ggm*gm;a=lift(hh*gg);A=lift(a*cm);
t=t+(A*C-a)/f*ggm);e=lift(Mod(u1*LN[1]+u2*LN[2]+u3*LN[3],N));
S=S+lift(t)*x^e)));S=S*Mod(1,p);S=lift(Mod(S,Qp));for(k=1,d,Rk=Rp[k];
if(Mod(S,Rk)==0,print("N=",N," p=",p," annihilator = ",Rk))));
if(dim==4,
for(u1=1,E1,hh=hh*Lh[1];for(u2=1,E2,hh=hh*Lh[2];
for(u3=1,E3,hh=hh*Lh[3];for(u4=1,E4,hh=hh*Lh[4];t=0;
for(v=1,p-1,gg=gg*g;ggm=ggm*gm;a=lift(hh*gg);A=lift(a*cm);
t=t+(A*C-a)/f*ggm);e=lift(Mod(u1*LN[1]+u2*LN[2]+u3*LN[3]+u4*LN[4],N));
S=S+lift(t)*x^e))));S=S*Mod(1,p);S=lift(Mod(S,Qp));for(k=1,d,Rk=Rp[k];
if(Mod(S,Rk)==0,print("N=",N," p=",p," annihilator = ",Rk))))))}
\end{verbatim}
\normalsize

\subsection{Table of non-trivial ${\mathcal T}_{\Q(N)}^\theta$}\label{table}
Let $\sigma$ be a generator of $\G(K/\Q)$, $K:=\Q(N)$, $N \geq 2$.
To simplify notations, ${\sf (a, p)}$ means ${\sf Mod(a, p)}$;
an annihilator $f(x)$ gives the Galois action $\tau^{f(\sigma)} =1$ 
in ${\mathcal T}_K/{\mathcal T}_K^p$ and defines a $p$-adic 
character $\theta$ ($\theta$ above $\psi$ of order $N$) for which 
${\mathcal T}_K^\theta \ne 1$ (recall that the order and the 
annihilation of ${\mathcal T}_K^\theta$ is given by $\frac{1}{2} L_p(1,\psi)$). 
For instance, the two informations:

\smallskip
\centerline{${\sf N=2,\ p=13,\ (1,13)*x+(1,13)}$ and ${\sf N=4,\ p=13,\ (1,13)*x+(5,13)}$}

\smallskip
\noindent
are related to characters $\theta$ of orders $2$ and $4$, for which the 
${\mathcal T}_{\Q(N)}^{\theta}$ are non-trivial
(see the complete structure of ${\mathcal T}_{\Q(4)}$ in the table
of Program II, \S\,\ref{T1}).\par
\footnotesize
\begin{verbatim}
             ANNIHILATORS mod p                    ANNIHILATORS mod p
 N=2   p=13   (1,13)*x+(1,13)         N=128 p=641  (1,641)*x+(287,641)
 N=2   p=31   (1,31)*x+(1,31)         N=129 p=257  (1,257)*x^2
 N=3   p=7    (1,7)*x+(5,7)                         +(81,257)*x+(1,257)
 N=3   p=73   (1,73)*x+(9,73)         N=136 p=137  (1,137)*x+(35,137)
 N=4   p=13   (1,13)*x+(5,13)         N=138 p=139  (1,139)*x+(31,139)
 N=4   p=29   (1,29)*x+(12,29)        N=140 p=29   (1,29)*x^2
 N=4   p=37   (1,37)*x+(31,37)                        +(3,29)*x+(5,29)
 N=5   p=11   (1,11)*x+(7,11)         N=144 p=433  (1,433)*x+(292,433)
 N=5   p=11   (1,11)*x+(8,11)         N=153 p=307  (1,307)*x+(178,307)
 N=6   p=7    (1,7)*x+(2,7)           N=155 p=311  (1,311)*x+(203,311)
 N=6   p=13   (1,13)*x+(9,13)         N=156 p=157  (1,157)*x+(80,157)
 N=6   p=43   (1,43)*x+(36,43)        N=172 p=173  (1,173)*x+(143,173)
 N=8   p=3    (1,3)*x^2+(1,3)*x+(2,3) N=174 p=349  (1,349)*x+(16,349)
 N=8   p=521  (1,521)*x+(206,521)     N=178 p=179  (1,179)*x+(129,179)
*N=10  p=3    (1,3)*x^4+(2,3)*x^3     N=190 p=761  (1,761)*x+(94,761)
             +(1,3)*x^2+(2,3)*x+(1,3) N=191 p=383  (1,383)*x+(315,383)
 N=12  p=13   (1,13)*x+(7,13)         N=191 p=383  (1,383)*x+(360,383)
 N=14  p=113  (1,113)*x+(106,113)     N=192 p=193  (1,193)*x+(115,193)
 N=15  p=31   (1,31)*x+(11,31)        SOLUTIONS p<10*N SPLIT IN Q(mu_N):
 N=15  p=31   (1,31)*x+(22,31)        N=210 p=211  (1,211)*x+(59,211)
 N=15  p=241  (1,241)*x+(81,241)      N=210 p=211  (1,211)*x+(154,211)
 N=15  p=1291 (1,1291)*x+(958,1291)   N=215 p=431  (1,431)*x+(74,431)
 N=17  p=239  (1,239)*x+(172,239)     N=215 p=1721 (1,1721)*x+(162,1721)
 N=18  p=37   (1,37)*x+(33,37)        N=225 p=1801 (1,1801)*x+(1536,1801)
 N=22  p=397  (1,397)*x+(16,397)      N=226 p=227  (1,227)*x+(160,227)
 N=22  p=2729 (1,2729)*x+(1268,2729)  N=230 p=691  (1,691)*x+(345,691)
 N=23  p=47   (1,47)*x+(19,47)        N=230 p=1381 (1,1381)*x+(144,1381)
 N=25  p=101  (1,101)*x+(21,101)      N=234 p=1171 (1,1171)*x+(988,1171)
 N=25  p=1151 (1,1151)*x+(744,1151)   N=236 p=1181 (1,1181)*x+(939,1181)
 N=25  p=2251 (1,2251)*x+(1033,2251)  N=240 p=241  (1,241)*x+(110,241)
 N=27  p=109  (1,109)*x+(20,109)      N=242 p=2179 (1,2179)*x+(1976,2179)
 N=28  p=701  (1,701)*x+(338,701)     N=249 p=499  (1,499)*x+(242,499)
 N=29  p=59   (1,59)*x+(56,59)        N=261 p=2089 (1,2089)*x+(1080,2089)
 N=30  p=1831 (1,1831)*x+(261,1831)   N=265 p=1061 (1,1061)*x+(919,1061)
 N=33  p=397  (1,397)*x+(136,397)     N=276 p=277  (1,277)*x+(272,277)
 N=38  p=2357 (1,2357)*x+(659,2357)   N=281 p=563  (1,563)*x+(551,563)
 N=39  p=157  (1,157)*x+(44,157)      N=284 p=2557 (1,2557)*x+(1876,2557)
 N=40  p=41   (1,41)*x+(22,41)        N=288 p=1153 (1,1153)*x+(428,1153)
 N=40  p=41   (1,41)*x+(30,41)        N=288 p=1153 (1,1153)*x+(577,1153)
 N=40  p=41   (1,41)*x+(35,41)        N=290 p=1451 (1,1451)*x+(135,1451)
 N=43  p=173  (1,173)*x+(41,173)      N=292 p=877  (1,877)*x+(405,877)
 N=45  p=541  (1,541)*x+(336,541)     N=293 p=587  (1,587)*x+(323,587)
 N=47  p=283  (1,283)*x+(27,283)      N=296 p=593  (1,593)*x+(447,593)
 N=48  p=193  (1,193)*x+(28,193)      N=296 p=1481 (1,1481)*x+(444,1481)
 N=50  p=101  (1,101)*x+(88,101)      N=303 p=607  (1,607)*x+(59,607)
 N=50  p=251  (1,251)*x+(123,251)     N=303 p=607  (1,607)*x+(564,607)
 N=50  p=1201 (1,1201)*x+(493,1201)   N=306 p=307  (1,307)*x+(7,307)
 N=52  p=53   (1,53)*x+(12,53)        N=306 p=919  (1,919)*x+(81,919)
 N=52  p=53   (1,53)*x+(21,53)        N=307 p=1229 (1,1229)*x+(121,1229)
 N=52  p=53   (1,53)*x+(27,53)        N=309 p=619  (1,619)*x+(32,619)
 N=52  p=157  (1,157)*x+(128,157)     N=315 p=631  (1,631)*x+(346,631)
 N=54  p=163  (1,163)*x+(21,163)      N=321 p=643  (1,643)*x+(520,643)
 N=56  p=13   (1,13)*x^2              N=324 p=2269 (1,2269)*x+(1878,2269)
               +(5,13)*x+(5,13)       N=324 p=2593 (1,2593)*x+(1526,2593)
 N=60  p=61   (1,61)*x+(43,61)        N=328 p=2953 (1,2953)*x+(2160,2953)
 N=63  p=379  (1,379)*x+(302,379)     N=330 p=331  (1,331)*x+(46,331)
 N=64  p=193  (1,193)*x+(160,193)     N=330 p=331  (1,331)*x+(110,331)
 N=66  p=1321 (1,1321)*x+(617,1321)   N=335 p=2011 (1,2011)*x+(919,2011)
 N=67  p=269  (1,269)*x+(176,269)     N=340 p=1021 (1,1021)*x+(417,1021)
 N=67  p=269  (1,269)*x+(208,269)     N=340 p=1021 (1,1021)*x+(993,1021)
 N=69  p=829  (1,829)*x+(532,829)     N=340 p=2381 (1,2381)*x+(1143,2381)
 N=70  p=71   (1,71)*x+(40,71)        N=344 p=1721 (1,1721)*x+(939,1721)
 N=70  p=211  (1,211)*x+(76,211)      N=345 p=1381 (1,1381)*x+(502,1381)
 N=72  p=73   (1,73)*x+(28,73)        N=346 p=2423 (1,2423)*x+(2301,2423)
 N=80  p=241  (1,241)*x+(124,241)     N=348 p=349  (1,349)*x+(132,349)
 N=81  p=487  (1,487)*x+(287,487)     N=352 p=353  (1,353)*x+(238,353)
 N=83  p=499  (1,499)*x+(312,499)     N=358 p=359  (1,359)*x+(111,359)
 N=84  p=757  (1,757)*x+(685,757)     N=358 p=359  (1,359)*x+(240,359)
 N=86  p=431  (1,431)*x+(145,431)     N=362 p=1087 (1,1087)*x+(172,1087)
 N=87  p=349  (1,349)*x+(157,349)     N=363 p=1453 (1,1453)*x+(1416,1453)
 N=87  p=523  (1,523)*x+(62,523)      N=363 p=2179 (1,2179)*x+(18,2179)
 N=88  p=353  (1,353)*x+(17,353)      N=368 p=3313 (1,3313)*x+(2536,3313)
 N=93  p=373  (1,373)*x+(307,373)     N=375 p=751  (1,751)*x+(335,751)
 N=95  p=191  (1,191)*x+(132,191)     N=382 p=383  (1,383)*x+(23,383)
 N=95  p=191  (1,191)*x+(137,191)     N=386 p=1931 (1,1931)*x+(1315,1931)
 N=99  p=991  (1,991)*x+(91,991)      N=388 p=389  (1,389)*x+(233,389)
 N=99  p=991  (1,991)*x+(818,991)     N=388 p=1553 (1,1553)*x+(421,1553)
 N=100 p=199  (1,199)*x^2             N=388 p=1553 (1,1553)*x+(464,1553)
             +(173,199)*x+(1,199)     N=395 p=2371 (1,2371)*x+(2137,2371)  
 N=101 p=607  (1,607)*x+(277,607)     N=400 p=401  (1,401)*x+(294,401)
 N=101 p=607  (1,607)*x+(514,607)     N=401 p=3209 (1,3209)*x+(154,3209)
 N=102 p=103  (1,103)*x+(83,103)      N=401 p=4813 (1,4813)*x+(3529,4813) 
 N=102 p=103  (1,103)*x+(97,103)      N=405 p=811  (1,811)*x+(645,811)
 N=104 p=937  (1,937)*x+(609,937)     N=407 p=3257 (1,3257)*x+(894,3257)
 N=106 p=107  (1,107)*x+(39,107)      N=407 p=3257 (1,3257)*x+(2268,3257)
 N=106 p=107  (1,107)*x+(61,107)      N=408 p=409  (1,409)*x+(370,409)
 N=107 p=857  (1,857)*x+(263,857)     N=412 p=1237 (1,1237)*x+(387,1237)
 N=108 p=109  (1,109)*x+(24,109)      N=420 p=421  (1,421)*x+(367,421)
 N=111 p=223  (1,223)*x+(176,223)     N=422 p=2111 (1,2111)*x+(615,2111)
 N=115 p=461  (1,461)*x+(87,461)      N=427 p=1709 (1,1709)*x+(922,1709)
 N=115 p=461  (1,461)*x+(103,461)     N=428 p=857  (1,857)*x+(31,857)
 N=118 p=709  (1,709)*x+(27,709)      N=429 p=3433 (1,3433)*x+(702,3433)
 N=124 p=5    (1,5)*x^3+(2,5)*x^2     N=430 p=1291 (1,1291)*x+(1091,1291)
                   +(2,5)*x+(3,5)     N=431 p=863  (1,863)*x+(406,863)
 N=124 p=373  (1,373)*x+(139,373)     N=431 p=863  (1,863)*x+(754,863)
 N=124 p=373  (1,373)*x+(340,373)     N=432 p=3889 (1,3889)*x+(2110,3889)
 N=126 p=379  (1,379)*x+(165,379)     N=442 p=443  (1,443)*x+(325,443)
 N=128 p=257  (1,257)*x+(113,257)     N=443 p=887  (1,887)*x+(226,887)
\end{verbatim}
\normalsize

\begin{remark} 
One knows a unique example of the form $p \mid \order {\mathcal C}_{\Q(N)}$
for $p \nmid N$ with the data $p=107$, $N =2 \cdot 53$ (Aoki--Fukuda \cite{AF}).
This value of $N$ does appear in the table with two annihilators
${\sf Mod(1,107)*x+Mod(39,107)}$,  ${\sf Mod(1,107)*x+Mod(61,107)}$; we ignore
the contributions for ${\mathcal C}_K$ and ${\mathcal R}_K$. 

\smallskip
We observe that many values of $N$ give more than one annihilator; 
they are perhaps good candidates for similar examples, even if all 
combinations are possible (including the cases of a unique annihilator).

\smallskip
For instance, the fields $K=\Q(5)$ and $\Q(15)$ give rise to two annihilators, 
which is specified by the structure ${\mathcal T}_K \simeq (\Z/11\Z)^2$ and 
${\mathcal T}_K \simeq (\Z/31\Z)^2$, but we compute that ${\mathcal C}_K=1$
in these cases. In \cite{Ho4} it is proved that for $\ell < 131, 109, 101$, the 
$p$-class group of $\Q(\ell^\infty)$ is trivial for $p=7, 11, 13$, respectively. 
This confirms that for $N=5$, $p=11$, ${\mathcal T}_K
= {\mathcal R}_K \simeq (\Z/11\Z)^2$.
The next examples in the table are $(N, p)=(40, 41)$, $(52, 53)$, $(67, 269)$, 
$(95,191)$, $(99, 991)$, etc. It would be interesting to test these fields.
\end{remark}

\smallskip
Let's give some verifications, using Program I \S\,\ref{T1}, computing independently 
the structure of ${\mathcal T}_K$; only very small $N$ can be tested because 
of the instructions ${\sf K=bnfinit(P)}$ and ${\sf KpEx=bnrinit(K,p^{Ex})}$, where the 
defining polynomial ${\sf P}$ may be obtained with Program XI above; the structure 
obtained for ${\mathcal T}_K$ depends on that of the subfields of $K$, while 
Program XI only gives the $p$-rank of ${\mathcal T}^*_K$. 

\smallskip
For instance,
the case $N=8$, $p=3$, with the annihilator $3\,x^2 + x +2 \pmod p$
is the first annihilator of degree $>1$; since (from the table) ${\mathcal T}_K$
is annihilated by the relative norm $x^4+1 \equiv (x^2+x+2)(x^2+2x+2)\!\!
\pmod 3$ and since~$3$ is totally inert, the result gives at least a $3$-rank 2.
This is validated as ${\sf N=8,\  p=3,\  rk(T)=2,\  T=List([3,3])}$.\par
\footnotesize
\begin{verbatim}
PROGRAM XII. STRUCTURE OF T IN SOME Q(N) AND VERIFICATIONS WITH PROGRAM I:
Field K=Q(5)  T=[11,11]
Field K=Q(6)  T=[7,7] T=[13,13] T=[31] T=[43] T=[73]
Field K=Q(12) T=[9,9] T=[7,7] T=[169,169,13,13] T=[29] T=[31] T=[37] 
                                                       T=[43] T=[73]
Field K=Q(14) T=[13] T=[31] T=[113]
Field K=Q(15) T=[7] T=[11,11] T=[31,31] T=[73]
Field K=Q(21) T=[49,7]
Field K=Q(30) T=[7,7] T=[11,11] T=[13,13] T=[31,31,31] T=[43] T=[73]
Field K=Q(42) T=[49,49,7,7] T=[13,13]
\end{verbatim}
\normalsize     

 The composite $K=\Q(42)$ has some interest for $p=7$ since ${\mathcal T}_K \simeq (\Z/7\Z)^2$; 
so we know that ${\mathcal T}_K^{\G(K/k)} \simeq {\mathcal T}_k$, where $k=\Q(6)$;
but with ${\mathcal T}_K  \simeq (\Z/7\Z)^2 \times  (\Z/7^2\Z)^2$,
showing that for $p$-ramification aspects, genus theory gives often increasing 
$p$-torsion groups contrary to $p$-class groups as we shall see in the next Section.
Since $\No_{K/k}({\mathcal T}_K) = {\mathcal T}_k$, we have
${\mathcal T}_K^* \simeq (\Z/7^2\Z)^2$.  Note that the case $N=42$
does not appear in the table because of the condition $p \nmid N$
which will be the framework of genus theory in $\Q(N)\Q(p^\infty)$.

\section{Genus theory and $p$-class groups in $\wh \Q$}\label{genus}
We consider, in the cyclotomic $\wh \Z$-extension $\wh \Q$, any subfield 
of finite or infinite degree, and fix a prime~$p$ (see \cite{Mor3} for 
analytic results of non-divisibility in this context).

\subsection{Definition of $\wh \Q^*$}
The pro-cyclic extension 
$\wh \Q$ is the direct composite over $\Q$ of $\Q(p^\infty)$ and 
the composite $\wh \Q^*$ of all the $\Q(\ell^\infty)$, for $\ell \ne p$.
Two cases then arise: that of the $p$-class groups of $K=\Q(N)$ when 
$p \nmid N$ and the case of fields written as composite $K_m = K \Q(p^m)$, 
$K \subset \wh \Q^*$, $m\geq 1$.

\smallskip
In the first case, we are in a generalization of Weber's problem.
In the second one the problem is related to genus theory,
whence to Greenberg's conjecture \cite{Gree}, for which one very strongly 
admits that $\order {\mathcal C}_{K \Q(p^m)}$ is constant 
for all $m \gg 0$ (i.e., the invariants $\lambda, \mu$ of $K$ 
for the prime $p$ are zero); see for instance \cite{FK1,Gra10,Jau2} 
for some developments. But we have:

\begin{proposition} Let $K = \Q(N) \subset \wh \Q^*$ (i.e., $p \nmid N$); let
$K_m=K \Q(p^m)$ for any $m \geq 0$. Then, under Leopoldt's conjecture, 
${\mathcal T}_{K_m}=1$, if and only if ${\mathcal T}_K=1$.
Thus, a necessary condition to get ${\mathcal C}_{K_m} \ne 1$ (for some $m \geq 0$)
is ${\mathcal T}_K \ne 1$, which brings into play the general Table \ref{table}.
\end{proposition}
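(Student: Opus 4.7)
The plan is to reduce everything to the characterization of $p$-rationality in terms of the maximal $p$-ramified pro-$p$-extension. Write ${\mathcal G}_K^{}$ for $\G(H_K^{\rm pr}/K)$ and ${\mathcal G}_K^{(p)}$ for the Galois group of the full (not necessarily abelian) maximal $p$-ramified pro-$p$-extension $K^{(p)}$ of $K$. Under Leopoldt's conjecture for $p$ in $K$, one has the classical equivalence
\[
{\mathcal T}_K = 1 \iff {\mathcal G}_K^{(p)} \text{ is a free pro-$p$ group on } r_2(K)+1 \text{ generators.}
\]
Since $K=\Q(N)\subset \wh\Q^*$ is totally real, $r_2(K)=0$, so ${\mathcal T}_K=1$ is equivalent to ${\mathcal G}_K^{(p)}\simeq \Z_p$, i.e.\ to the equality $K^{(p)} = K_\infty$ of the cyclotomic $\Z_p$-extension with the maximal $p$-ramified pro-$p$-extension. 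The same statement holds verbatim for the totally real field $K_m$, with its own $(K_m)_\infty = K\Q(p^\infty) = K_\infty$.

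The core step is then to show that $K^{(p)}$ and $K_m^{(p)}$ coincide. First, $K^{(p)}/K_m$ is pro-$p$ and $p$-ramified (since $K^{(p)}/K$ is) and contains $K_m$, so $K^{(p)} \subseteq K_m^{(p)}$. Conversely, $K_m^{(p)}/K$ is the compositum of the finite $p$-extension $K_m/K$ (only $p$ ramifies there) with the $p$-ramified pro-$p$-extension $K_m^{(p)}/K_m$, hence $K_m^{(p)}/K$ is itself pro-$p$ and $p$-ramified, giving $K_m^{(p)} \subseteq K^{(p)}$. Combining the two inclusions yields $K^{(p)} = K_m^{(p)}$, and consequently
\[
{\mathcal T}_K = 1 \iff K^{(p)} = K_\infty \iff K_m^{(p)} = K_\infty \iff {\mathcal T}_{K_m} = 1,
\]
which is the desired equivalence.

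For the concluding sentence, recall that $H_{K_m}^{\rm nr} \subseteq H_{K_m}^{\rm pr}$, so ${\mathcal C}_{K_m}$ is a quotient of ${\mathcal G}_{K_m}$; via the identity $\order {\mathcal T}_{K_m} = \order {\mathcal C}_{K_m}\cdot\order {\mathcal R}_{K_m}\cdot\order{\mathcal W}_{K_m}$ (or directly), ${\mathcal C}_{K_m}\ne 1$ forces ${\mathcal T}_{K_m}\ne 1$, hence ${\mathcal T}_K\ne 1$ by the equivalence just proved. Thus a non-trivial $p$-class group somewhere up the cyclotomic tower requires a non-trivial entry for $(N,p)$ in Table~\ref{table}.

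The main point requiring care is the claim $K^{(p)}=K_m^{(p)}$: the subtlety is that a $p$-ramified pro-$p$-extension of $K_m$ need not be abelian over $K$, so one must argue directly on the ``pro-$p$ + $p$-ramified'' property rather than passing to $H^{\rm pr}$. Once that is in place, the rest is a formal application of the structure theorem characterising $p$-rationality. The case $p=2$ requires no change since $K$ is totally real and $K_\infty/K$ is still a $\Z_2$-extension; the ${\mathcal W}_K$-term appears only in the size formula and does not affect the equivalence ${\mathcal T}_{K_m}=1 \Leftrightarrow {\mathcal T}_K=1$ (strictly, for $p=2$ one must replace ``${\mathcal T}_{K_m}=1$'' by ``${\mathcal T}_{K_m}={\mathcal W}_{K_m}$'' as in the Main Theorem, but the argument via $K^{(p)}=K_\infty$ is identical).
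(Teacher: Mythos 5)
Your argument is correct, but it is genuinely different from the paper's. The paper disposes of the proposition in one line by invoking the fixed-point formula for $p$-ramified $p$-extensions, ${\mathcal T}_{K_m}^{\G(K_m/K)} \simeq {\mathcal T}_K$ (the analogue for ${\mathcal T}$ of Chevalley's formula, cited from \cite[Theorem IV.3.3]{Gra0}); both directions then follow since a non-trivial finite $p$-group acted on by a $p$-group has non-trivial fixed points. You instead work with the full maximal $p$-ramified pro-$p$-extension $K^{(p)}$, reduce $p$-rationality of a totally real field to the equality $K^{(p)}=K_\infty$, and observe that $K^{(p)}=K_m^{(p)}$ because $K_m\subseteq K^{(p)}$ and the property ``pro-$p$ and unramified outside $p$'' is stable in the tower $K\subseteq K_m\subseteq K_m^{(p)}$. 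Your key step $K^{(p)}=K_m^{(p)}$ is sound (Galois-ness of $K_m^{(p)}/K$ and the multiplicativity of ramification are the only points to check, and you flag them), and the reduction to $K^{(p)}=K_\infty$ needs only Leopoldt plus the Burnside basis theorem, so your route is softer and more self-contained: it avoids the fixed-point formula entirely. What it gives up is precision: the paper's formula identifies the fixed submodule ${\mathcal T}_{K_m}^{\G(K_m/K)}$ exactly, which is the quantitative input reused later in the genus-theory section, whereas your argument only yields the triviality equivalence. One small remark: your closing hedge about $p=2$ is unnecessary for this proposition --- the equivalence ${\mathcal T}_{K_m}=1\Leftrightarrow{\mathcal T}_K=1$ holds verbatim for $p=2$ (as your own argument via $K^{(2)}=K_\infty$ shows); the replacement of triviality by ${\mathcal T}={\mathcal W}$ is only needed for the class-group criterion of Theorem \ref{thmfond}, not here.
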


\begin{proof} Since $K_m/K$ is a $p$-ramified $p$-extension, the claim 
comes from the fixed points formula giving ${\mathcal T}_{K_m}^{\G(K_m/K)}
\simeq {\mathcal T}_K$ (\cite[Theorem IV.3.3]{Gra0}, \cite[Proposition 6]{Gra3}, 
\cite[Appendix A.4.2]{Gra9}).
\end{proof}

We shall study the reciprocal aspects in the next subsection
to result ultimately in Theorem \ref{thmfond}.

\subsection{The $p$-class group of $K_m$ -- 
Fundamental relation with ${\mathcal R}_K$} \label{diagram}
The analog of Weber's problem in $\wh \Q$ is, a priori, doubtful
because of Chevalley's formula in an extension $K_m/K$, 
$K \subset \wh \Q^*$, $K_m = K \Q(p^m)$: 
$$\order ({\mathcal C}_{K_m}^{\rm res})^{\G(K_m/K)} = 
\order {\mathcal C}_K^{\rm res} \cdot \ffrac{p^{m\, (s_p-1)}}
{(E^{\rm pos}_K : E_K^{\rm pos} \cap \No_{K_m/K} (K_m^\times) )}, $$ 
where $s_p := \order S$, the number of $p$-places.
So ${\mathcal C}_{K_m}^{\rm res}=1$ as soon as 
${\mathcal C}_K^{\rm res} = 1$ and $s_p=1$. 
If $s_p > 1$, the right factor may be a power of $p$ 
only depending on the normic properties of $E_K^{\rm pos}$
in $K_m/K$. 

\smallskip
Consider the two diagrams \cite[\S\,III.4.4.1]{Gra0} and 
\cite[Diagrams 2 and 3]{Gra10} (under the Leopoldt conjecture),
where $K^{\rm ab}$ is the maximal abelian pro-$p$-extension of $K$:
\unitlength=0.86cm
$$\vbox{\hbox{\hspace{-1.5cm} \vspace{-0.2cm}
\begin{picture}(11.5,3.7)
\put(-1.35,3.4){$\hbox{Diagram I.}$}
% horizontales
\put(8.5,2.50){\line(1,0){3.0}}
\put(1.5,2.50){\line(1,0){5.9}}
\put(1.5,0.50){\line(1,0){5.9}}
% verticales
\put(1.0,0.9){\line(0,1){1.20}}
\put(0.0,0.5){\line(1,0){0.70}}
\put(8.00,0.9){\line(0,1){1.20}}
\bezier{400}(1.2,2.9)(6.45,3.4)(11.7,2.9)
\put(5.4,3.4){\ft ${\prod_{v \nmid p}{F_v^\times \! \otimes\! \Z_p}}$}
\bezier{280}(8.5,0.45)(11.5,0.5)(11.9,2.2)%%
\put(11.2,0.8){\ft $U_K \!=\! \bigoplus_{{\mathfrak p}\mid p}U_{\mathfrak p}$}
\bezier{250}(8.5,2.3)(10.0,1.8)(11.6,2.3)
\put(9.5,1.7){\ft $E_K \! \otimes\! \Z_p$}
\put(11.7,2.4){$K^{\rm ab}$}
\put(7.65,2.4){$M_0$}
\put(0.6,2.4){$H_K^{\rm pr}$}
\put(7.7,0.4){$H_K^{\rm ta}$}
\put(0.75,0.4){$H_K^{\rm nr}$}
\put(-0.4,0.4){$K$}
\end{picture}   }} $$
\unitlength=1.0cm
\noindent
where $F_v$ is the residue field of the tame place $v$ (finite or infinite).
We know that, in an idelic framework, the fixed field of
$U_K = \bigoplus_{{\mathfrak p} \mid p} U_{\mathfrak p}$ is
the maximal tame sub-extension $H_K^{\rm ta}$, since each 
$U_{\mathfrak p}$ is the inertia group of ${\mathfrak p}$ in $K^{\rm ab}/K$.
From Diagram I, the restriction of $U_K$ to ${\rm Gal}(H_K^{\rm pr}/K)$ 
is ${\rm Gal}(H_K^{\rm pr}/H_K^{\rm nr}) \simeq U_K/\iota_p(E_K \otimes \Z_p)$ 
whose torsion group is ${\rm Gal}(H_K^{\rm pr}/K_\infty H_K^{\rm nr})$.
\unitlength=0.96cm
$$\vbox{\hbox{\hspace{-2.5cm} \vspace{0.4cm}
\begin{picture}(11.5,1.7)
\put(-0.05,1.4){$\hbox{Diagram II.}$}
\bezier{550}(0.8,0.8)(7.0,1.8)(13.2,0.8)
\put(7.0,1.4){\ft  ${\mathcal T}_K$}
\bezier{350}(4.4,0.2)(7.9,-0.5)(11.1,0.2)
\put(7.85,-0.42){\ft  ${\mathcal R}_K$}
\put(6.2,0.7){\ft  ${\mathcal R}_K^{\rm nr}$}
\put(9.2,0.7){\ft  ${\mathcal R}_K^{\rm ram}$}
\put(2.4,0.15){\ft  ${\mathcal C}_K$}
% horizontales
\put(5.4,0.50){\line(1,0){2.15}}
\put(7.7,0.4){$H_K^{\rm gen}$}
\put(8.5,0.50){\line(1,0){2.3}}
\put(11.5,0.50){\line(1,0){1.5}}
\put(10.85,0.4){$H_K^{\rm bp}$}
\put(12.0,0.15){\ft ${\mathcal W}_K$}
\put(1.2,0.50){\line(1,0){2.65}}
\put(4.0,0.4){$K_\infty H_K^{\rm nr}$}
\put(0.5,0.4){$K_\infty$}
\put(13.1,0.4){$H_K^{\rm pr}$}
\bezier{350}(0.9,0.2)(4.9,-0.5)(8.0,0.2)
\put(4.3,-0.45){\ft  ${\mathcal G}_K$}
\end{picture} }} $$
\unitlength=1.0cm
with ${\mathcal G}_K := \G(H_K^{\rm gen}/K_\infty)$, where
$H_K^{\rm gen}$ is the union, over $m$, of the genus fields 
$H_{K_m/K}$ (maximal abelian $p$-extensions of $K$, 
unramified over $K_m$; then $[H_{K_m/K} : K_m] = 
\order  {\mathcal C}_{K_m}^g$ for $g := \G(K_m/K)$); it follows that $H_K^{\rm gen}$ is 
the maximal unramified extension of $K_\infty$ 
in $H_K^{\rm pr}$ \cite[Proposition 3.6]{Gra10} and that:
\begin{equation}\label{genres}
\order {\mathcal G}_K = \order {\mathcal C}_K\, \ffrac{p^{m\,(s_p-1)}}
{(E^{\rm pos}_K : E_K^{\rm pos} \cap \No_{K_m/K} (K_m^\times))} =
\order {\mathcal C}_{K_m}^g,\ \hbox{for $m$ large enough}
\end{equation}

The inertia groups, in $H_K^{\rm pr}/K_\infty$, of the $p$-places are the torsion 
parts of the images of the $U_{\mathfrak p}$, then are isomorphic to 
${\rm tor}_{\Z_p} (U_{\mathfrak p}/\iota_p(E_K\otimes \Z_p) \cap U_{\mathfrak p})$.
So, the subgroup ${\mathcal I}_K$ of ${\mathcal T}_K$ generated by these inertia groups
fixes $H_K^{\rm gen}$. We then have the following result (under Leopoldt's conjecture) 
which will be fundamental for the search of non-trivial ${\mathcal C}_{K_m}^{\rm res}$:

\begin{lemma} \label{split}
Let $p$ be totally split in $K=\Q(N)$, $N \ne 1$.
The Galois group ${\mathcal I}_K$ generated by the inertia groups 
${\rm tor}_{\Z_p}(U_{\mathfrak p}/\iota_p(E_K\otimes \Z_p) \cap U_{\mathfrak p})$
is isomorphic to ${\mathcal W}_K$, trivial for $p\ne 2$, isomorphic to 
$\F_2^{N-1}$ for $p=2$ (Lemma \ref{lemmaW}). Thus
${\mathcal R}_K^{\rm ram}=1$, ${\mathcal R}_K^{\rm nr}={\mathcal R}_K$
and $\order {\mathcal G}_K = \order {\mathcal C}_K \order {\mathcal R}_K$
(cf. Diagram II).
\end{lemma}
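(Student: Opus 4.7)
The plan is as follows. Since $p$ is totally split in $K$, each completion satisfies $K_{\mathfrak p} = \Q_p$, so $U_{\mathfrak p} = 1 + p\Z_p$. For $p > 2$ this is pro-cyclic and torsion-free, isomorphic to $\Z_p$; for $p = 2$ it decomposes as $\{\pm 1\} \oplus (1 + 4\Z_2) \simeq \Z/2\Z \oplus \Z_2$, with torsion exactly $\mu_{\mathfrak p} = \{\pm 1\}$. By local class field theory, the diagonal embedding $\iota_{\mathfrak p} : U_{\mathfrak p} \hookrightarrow U_K \to {\mathcal G}_K = \G(H_K^{\rm pr}/K)$ identifies $U_{\mathfrak p}$ with the inertia subgroup of ${\mathfrak p}$ in ${\mathcal G}_K$.

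Composing with the projection $\pi : {\mathcal G}_K = \Gamma_K \oplus {\mathcal T}_K \twoheadrightarrow \Gamma_K$, I would exploit the fact that $\Q_p(\mu_{p^\infty})/\Q_p$ is totally ramified: since $K_{\mathfrak p} = \Q_p$, every ${\mathfrak p}$ is totally ramified in $K_\infty/K$, and so $\pi\circ\iota_{\mathfrak p}$ surjects onto $\Gamma_K \simeq \Z_p$. A rank computation pins down its kernel as a pro-$p$ group of $\Z_p$-rank $0$ that contains ${\rm tor}_{\Z_p}(U_{\mathfrak p})$, and hence equals exactly $\mu_{\mathfrak p}$. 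Consequently the torsion subgroup ${\rm tor}_{\Z_p}\bigl(U_{\mathfrak p}/(\iota_p(E_K\otimes\Z_p)\cap U_{\mathfrak p})\bigr)$ of $\iota_{\mathfrak p}(U_{\mathfrak p}) \subset {\mathcal G}_K$ is precisely the image of $\mu_{\mathfrak p}$: zero for $p > 2$, and at most $\Z/2\Z$ generated by $\iota_{\mathfrak p}(-1)$ for $p = 2$.

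Summing over ${\mathfrak p}\mid p$, the group ${\mathcal I}_K \subset {\mathcal T}_K$ coincides with the image of $\bigoplus_{{\mathfrak p}\mid p}\mu_{\mathfrak p}$ in ${\mathcal G}_K$. Its kernel $\bigl(\bigoplus_{{\mathfrak p}\mid p}\mu_{\mathfrak p}\bigr) \cap \ov E_K$ equals $\iota\mu_K$ by the exact sequence of \S\,\ref{definitions}$(b)$ --- this is the step at which Leopoldt's conjecture, available here since $K$ is abelian, is essential. Therefore
\[
{\mathcal I}_K \;\simeq\; \bigoplus_{{\mathfrak p}\mid p}\mu_{\mathfrak p} \,\Big/\, \iota\mu_K \;=\; {\mathcal W}_K,
\]
which by Lemma \ref{lemmaW} is trivial for $p > 2$ and isomorphic to $\F_2^{N-1}$ for $p = 2$, since $\order S = [K:\Q] = N$ under total splitting.

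Finally, by the very definition of the genus field as the maximal subextension of $H_K^{\rm pr}$ that is unramified over $K_\infty$, one has $\G(H_K^{\rm pr}/H_K^{\rm gen}) = {\mathcal I}_K$. Reading this off Diagram II, where $\G(H_K^{\rm pr}/H_K^{\rm gen}) = {\mathcal R}_K^{\rm ram}\cdot{\mathcal W}_K$, the equality ${\mathcal I}_K = {\mathcal W}_K$ forces ${\mathcal R}_K^{\rm ram} = 1$, whence ${\mathcal R}_K = {\mathcal R}_K^{\rm nr}$ and $\order{\mathcal G}_K = \order{\mathcal C}_K\cdot\order{\mathcal R}_K$ by \eqref{genres}. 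The main technical point is the identification $\bigl(\bigoplus\mu_{\mathfrak p}\bigr)\cap\ov E_K = \iota\mu_K$ in the case $p = 2$: it amounts to saying that no non-trivial combination of local signs $-1$ at primes above $2$ comes from a global unit other than the diagonal $\iota(-1)$, which is exactly the content of the torsion exact sequence of \S\,\ref{definitions}$(b)$ and uses Leopoldt in an essential way.
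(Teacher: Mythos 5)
Your proof is correct, and it reaches the conclusion by a genuinely different mechanism at the key step. To show that $\ov E_K\cap U_{\mathfrak p}$ is contained in $\mu_{\mathfrak p}$ (hence trivial for $p>2$, so that the image of $U_{\mathfrak p}$ in $\G(H_K^{\rm pr}/K)$ is torsion-free), the paper argues via the product formula for norms: if $\varepsilon\in E_K\otimes\Z_p$ has $\iota_p(\varepsilon)=(\iota_{\mathfrak p}(\varepsilon),1,\ldots,1)$, then $\No_{K/\Q}(\varepsilon)=\pm1$ equals the product of the local components (the local norms at split places being identities), which forces $\iota_{\mathfrak p}(\varepsilon)=1$ for $p$ odd and then $\varepsilon=1$ by Leopoldt. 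You instead use the total ramification of $\mathfrak p$ in $K_\infty/K$ (valid because $K_{\mathfrak p}=\Q_p$) to get a surjection of $U_{\mathfrak p}\simeq\mu_{\mathfrak p}\times\Z_p$ onto a torsion-free $\Z_p$-quotient of rank one, whose kernel is then of $\Z_p$-rank $0$ and hence equal to $\mu_{\mathfrak p}$; this is equally valid, treats $p=2$ and $p>2$ uniformly, and localizes the use of Leopoldt in the decomposition $\G(H_K^{\rm pr}/K)\simeq\Z_p\oplus{\mathcal T}_K$ rather than in the injectivity of $\iota_p$ on $E_K\otimes\Z_p$. For $p=2$ both arguments reduce ${\mathcal I}_K$ to the image of $\oplus_{{\mathfrak p}\mid p}\mu_{\mathfrak p}$: the paper observes directly that the only global unit landing in $\oplus_{\mathfrak p}\mu_{\mathfrak p}$ is the diagonal $-1$, while you extract the same fact, $\big(\oplus_{\mathfrak p}\mu_{\mathfrak p}\big)\cap\ov E_K=\iota\mu_K$, from the injectivity of ${\mathcal W}_K\to{\rm tor}_{\Z_p}(U_K/\ov E_K)$ in the exact sequence recalled in the paper's list of definitions, item $(b)$ --- a slightly heavier input, but one the paper has already placed at your disposal. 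The concluding deduction of ${\mathcal R}_K^{\rm ram}=1$ and $\order{\mathcal G}_K=\order{\mathcal C}_K\,\order{\mathcal R}_K$ from ${\mathcal I}_K={\mathcal W}_K$ via Diagram II and formula \eqref{genres} coincides with the paper's intent.
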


\begin{proof} Let $\varepsilon \in E_K\otimes \Z_p$ be such that the 
diagonal image $\iota_p(\varepsilon)$ in $U_K$ is $\iota_p(\varepsilon) = 
(\iota_{\mathfrak p}(\varepsilon), 1, \ldots , 1)$. Since the global norm 
$\No_{K/\Q}$ is the product of the local norms at the $p$-places 
(thus identities), and since $\No_{K/\Q}(\varepsilon)=\pm (1, \ldots ,1)$, 
this yields $\iota_{\mathfrak p}(\varepsilon) = 1$ (since $N>1$), then
$\iota_p(\varepsilon) =1$ and $\varepsilon = 1$ (Leopoldt's conjecture) 
and the claim for $p \ne 2$. For $p=2$, ${\rm tor}_{\Z_2}(U_{\mathfrak p}/ 
\iota_2(E_K\otimes \Z_2) \cap U_{\mathfrak p}) = {\rm tor}_{\Z_2}
(U_{\mathfrak p}) = \mu_2$; since the image of ${\rm tor}_{\Z_2}(U_K)$
is ${\mathcal W}_K$, we have ${\mathcal I}_K \supseteq {\mathcal W}_K$
but since the image of $-1$ in $U_K$ (as global unit) is trivial, this gets
the equality ${\mathcal I}_K = {\mathcal W}_K$.
\end{proof}

The following result gives an important simplification in the context of 
Greenberg's conjecture in the totally split case \cite[Theorem 2, \S\,4]{Gree},
and explains why examples with ${\mathcal C}_{K_m} \ne 1$
will take place in the above totally split case of $p$ in $K$ 
(Theorem \ref{thmfond} and Corollary \ref{eight}):

\begin{lemma} \label{totdec}
Let $p$ be a prime and let $K=\Q(N) \subset \wh \Q^*$
(i.e., $p \nmid N$). We assume that ${\mathcal C}_K=1$. 
Let $K' \subseteq K$ be the splitting field of $p$ in $K$.

\noindent
Let $K_m:= K \Q(p^m)$, $K'_m:= K' \Q(p^m)$, $m \geq 0$. 
Then ${\mathcal C}_{K_m}=1$ if and only if ${\mathcal C}_{K'_m}=1$. Therefore,
$\lambda = \mu = \nu = 0$ if and only if $\lambda' = \mu' = \nu' = 0$ in 
terms of Iwasawa's invariants in $K_\infty$ and $K'_\infty$, respectively.
\end{lemma}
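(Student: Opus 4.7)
The plan is to prove the two implications of the equivalence separately, with the hard direction resting on Chevalley's fixed-point formula applied to the towers $K_m/K$ and $K'_m/K'$ together with the functoriality of local reciprocity across the unramified base change $K_v/K'_{v'}$. Set $d := [K:K']$ and $N' := [K':\Q]$; since $d \mid N$ and $p \nmid N$, $\gcd(d,p)=1$. Because $K'$ is the decomposition field of $p$ in $K/\Q$ and $K$ is unramified at $p$, $p$ splits into $N'$ primes in $K'$, each remaining inert in $K/K'$; consequently $K$ and $K'$ have the same number $N'$ of $p$-places, and $K_m/K$, $K'_m/K'$ are cyclic and totally ramified at every $p$-place. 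The easy direction is immediate: since $[K_m:K'_m]=d$ is prime to $p$, the norm-transfer composition on $p$-class groups equals multiplication by the $p$-adic unit $d$, so the norm ${\mathcal C}_{K_m}\to{\mathcal C}_{K'_m}$ is surjective; hence ${\mathcal C}_{K_m}=1\Rightarrow{\mathcal C}_{K'_m}=1$. The same argument yields ${\mathcal C}_{K'}=1$ from the standing hypothesis ${\mathcal C}_K=1$.

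For the converse, write $\Gamma_m := \G(K_m/K)$ and $\Gamma'_m := \G(K'_m/K')$. Applying Chevalley's formula in each tower and simplifying via ${\mathcal C}_K={\mathcal C}_{K'}=1$ (together with the fact that each has exactly $N'$ totally ramified $p$-places), one obtains
$$\order{\mathcal C}_{K_m}^{\Gamma_m} = \ffrac{p^{m(N'-1)}}{(E_K^{\rm pos}:E_K^{\rm pos}\cap\No_{K_m/K}(K_m^\times))_p}, \quad
\order{\mathcal C}_{K'_m}^{\Gamma'_m} = \ffrac{p^{m(N'-1)}}{(E_{K'}^{\rm pos}:E_{K'}^{\rm pos}\cap\No_{K'_m/K'}(K'^\times_m))_p}.$$
The global product formula for the Artin map forces the image of $E_K^{\rm pos}$ in $\prod_{v\mid p}\G(K_{m,v}/K_v)\simeq(\Z/p^m\Z)^{N'}$ to lie in the hyperplane $\{\sum_v x_v = 0\}$ of order $p^{m(N'-1)}$, so each unit-norm index is at most $p^{m(N'-1)}$, with equality precisely when the corresponding $\Gamma_m$-invariants are trivial. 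The assumption ${\mathcal C}_{K'_m}=1$ therefore says that the image of $E_{K'}^{\rm pos}$ is the \emph{full} hyperplane on the $K'$-side.

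To transport this saturation from $K'$ to $K$, I would invoke the functoriality of local reciprocity across the unramified extension $K_v/K'_{v'}$ of degree $d$: for $u\in K'^\times_{v'}\subset K_v^\times$ one has $N_{K_v/K'_{v'}}(u)=u^d$, so under the canonical identification $\G(K_{m,v}/K_v)\simeq\G(K'_{m,v'}/K'_{v'})\simeq\Z/p^m\Z$ the local Artin symbols satisfy $\rho_{K_m/K,\,v}(u)=d\cdot\rho_{K'_m/K',\,v'}(u)$. Because $d$ is invertible modulo $p^m$, the image of $E_{K'}^{\rm pos}$ in $\prod_{v\mid p}\G(K_{m,v}/K_v)$ is again the full hyperplane, and a fortiori so is the image of the larger group $E_K^{\rm pos}\supseteq E_{K'}^{\rm pos}$. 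This forces $\order{\mathcal C}_{K_m}^{\Gamma_m}=1$, and since $\Gamma_m$ is a $p$-group acting on the finite $p$-group ${\mathcal C}_{K_m}$, vanishing of invariants forces ${\mathcal C}_{K_m}=1$. The Iwasawa-invariant statement follows by applying the equivalence at every level $m$. The main obstacle is this local-reciprocity step that carries the saturation of the norm-residue image from $E_{K'}^{\rm pos}$ to $E_K^{\rm pos}$; for $p=2$ one works with the restricted variant of Chevalley's formula and separates the ${\mathcal W}_K$-contribution, but the skeleton of the argument is unchanged.
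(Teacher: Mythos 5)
Your proof is correct, and its overall skeleton --- Chevalley's formula in the two towers with the same number $s_p=[K':\Q]$ of totally ramified $p$-places, reduction to a comparison of the two unit-norm indices, and Nakayama to pass from trivial $\G(K_m/K)$-invariants to a trivial class group --- coincides with the paper's. Where you diverge is in the comparison step itself. The paper disposes of it in two lines by a purely global norm computation: if $\varepsilon'\in E_{K'}^{\rm pos}$ is a norm from $K_m$, say $\varepsilon'=\No_{K_m/K}(y)$, then applying $\No_{K_m/K'_m}$ gives $\varepsilon'^{\,[K:K']}=\No_{K'_m/K'}(y')$, and since $[K:K']$ is prime to $p$ this makes the natural map between the two index quotients injective; combined with the injectivity of ${\mathcal C}_{K'_m}\to{\mathcal C}_{K_m}$ (degree prime to $p$) this even yields the isomorphism ${\mathcal C}_{K_m}^{\,g}\simeq{\mathcal C}_{K'_m}^{\,g}$, which gives both implications of the equivalence at once. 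You instead run the comparison through local class field theory: you identify each norm index with the order of the image of the units in $\prod_{v\mid p}\G(K_{m,v}/K_v)$, and use the functoriality $\rho_{K_m/K,\,v}(u)=d\cdot\rho_{K'_m/K',\,v'}(u)$ for $u\in K'^{\times}_{v'}$ across the unramified local extension of degree $d$, with $d$ invertible modulo $p^m$. This is heavier machinery for the same conclusion, but it buys an explicit description of the saturated norm-residue image as the full hyperplane $\{\sum_v x_v=0\}$, which is precisely the viewpoint the paper exploits later in its normic-symbol computations (Programs XIII--XIV). Your separate treatment of the easy direction by surjectivity of the norm on $p$-class groups is also correct and slightly more economical than deducing it from the isomorphism of fixed-point subgroups.
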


\begin{proof} Let $g:=\G(K_m/K)$; since ${\mathcal C}_K=1$, 
the Chevalley formulas become: 
$$\hbox{$\order {\mathcal C}_{K_m}^{\,g}\! = \ffrac{p^{m\, (s_p-1)}}
{(E^{\rm pos}_K : E_K^{\rm pos} \cap \No_{K_m/K} (K_m^\times))}$ and
$\order {\mathcal C}_{K'_m}^{\,g}\! = \ffrac{p^{m\, (s_p-1)}}
{(E_{K'}^{\rm pos} : E_{K'}^{\rm pos} \cap \No_{K'_m/K'} (K'^\times_m))}$}$$
since $s_p = [K' : \Q]$ is the same in the two formulas.

\smallskip
The map $E^{\rm pos}_{K'} /E_{K'}^{\rm pos} \cap \No_{K'_m/K'} (K'^\times_m)
\to E^{\rm pos}_K / E_K^{\rm pos} \cap \No_{K_m/K} (K_m^\times)$ is injective; indeed,
with obvious notations, if $\varepsilon' = \No_{K_m/K}(y)$, $y\in K_m^\times$,
then using $\No_{K_m/K'_m}$, one gets $\varepsilon'^{\,[K:K']} = \No_{K'_m/K'}(y')$,
$y' \in K'^\times_m$. The result follows since $[K:K']$ is prime to $p$. 

\smallskip
So $(E_{K'}^{\rm pos} : E_{K'}^{\rm pos} 
\cap \No_{K'_m/K'} (K'^\times_m)) \leq (E^{\rm pos}_K : E_K^{\rm pos}
\cap \No_{K_m/K} (K_m^\times))$, whence $\order {\mathcal C}_{K'_m}^{\,g}
\geq \order {\mathcal C}_{K_m}^{\,g}$; but the map ${\mathcal C}_{K'_m}
\to {\mathcal C}_{K_m}$ is injective since $[K_m : K'_m]$ is prime to $p$, 
and we get ${\mathcal C}_{K_m}^{\,g} \simeq {\mathcal C}_{K'_m}^{\,g}$.
Whence easily the claims.
\end{proof}

The following result may be considered as a corollary 
to Lemma \ref{split} (cf. \cite[Theorem 4.7]{Gra5}, \cite[Section 3]{Gra81}, 
\cite[Proposition 3.3, Theorem 1]{Gra10} for more information after the 
pioneering work of Taya \cite[Theorem 1.1]{Ta}):

\begin{lemma}\label{ta}
Let $K= \Q(N) \subset \wh \Q^*$ (i.e., $p \nmid N$) and let $K_m :=K \Q(p^m)$. 
Then the integer $\ffrac{p^{m\, (s_p-1)}}{(E^{\rm pos}_K : E_K^{\rm pos} \cap 
\No_{K_m/K} (K_m^\times) )}$ divides $\order {\mathcal R}_K^{\rm nr}$. 
If $p$ totally splits in $K$, then for all $m$ large enough there is equality with 
$\order {\mathcal R}_K^{\rm nr}=\order {\mathcal R}_K$ (cf. Diagram II).
\end{lemma}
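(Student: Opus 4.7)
The plan is to identify the combinatorial factor $u_m := p^{m(s_p-1)}/(E^{\rm pos}_K : E^{\rm pos}_K \cap \No_{K_m/K}(K_m^\times))$ with a degree inside the genus tower of Diagram II, and then read off the relation to $\order \mathcal R_K^{\rm nr}$ directly from that diagram. Equation \eqref{genres} rewrites Chevalley's formula as $\order \mathcal C_{K_m}^g = \order \mathcal C_K \cdot u_m$, where $g := \G(K_m/K)$; and by class field theory applied to the genus field, $\G(H_{K_m/K}/K_m) \simeq \mathcal C_{K_m}^g$, so $[H_{K_m/K} : K_m] = \order \mathcal C_K \cdot u_m$.

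Next I would establish the Galois-theoretic identification $u_m = [H_{K_m/K} : K_m H_K^{\rm nr}]$. Every $\mathfrak p \mid p$ of $K$ is totally ramified in $K_m/K$ (locally $K_m/K$ is the cyclotomic $\Z_p$-extension at $\mathfrak p$), so the unramified extension $K_m \cap H_K^{\rm nr}$ of $K$ inside $K_m$ must be $K$ itself, giving $[K_m H_K^{\rm nr} : K_m] = \order \mathcal C_K$. Dividing the two degrees yields the desired equality.

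The remaining step is to compare $[H_{K_m/K} : K_m H_K^{\rm nr}]$ with $\order \mathcal R_K^{\rm nr} = [H_K^{\rm gen} : K_\infty H_K^{\rm nr}]$ (cf.\ Diagram II). I would show that $H_{K_m/K} \cap K_\infty H_K^{\rm nr} = K_m H_K^{\rm nr}$: indeed $K_\infty/K_m$ is totally ramified at each $p$-place of $K_m$ (a general property of cyclotomic $\Z_p$-extensions), so $K_\infty H_K^{\rm nr}/K_m H_K^{\rm nr}$ is totally ramified at the places above $p$, while $H_{K_m/K}/K_m H_K^{\rm nr}$ is everywhere unramified; the two can meet only in $K_m H_K^{\rm nr}$. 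Consequently
\[
u_m = [H_{K_m/K} : K_m H_K^{\rm nr}] = [H_{K_m/K} \cdot K_\infty H_K^{\rm nr} : K_\infty H_K^{\rm nr}],
\]
which is the order of a subgroup of $\G(H_K^{\rm gen}/K_\infty H_K^{\rm nr}) \simeq \mathcal R_K^{\rm nr}$. Hence $u_m$ divides $\order \mathcal R_K^{\rm nr}$ for every $m$. Since $H_K^{\rm gen} = \bigcup_m H_{K_m/K}$ and $\mathcal R_K^{\rm nr}$ is finite, the composite $H_{K_m/K} \cdot K_\infty H_K^{\rm nr}$ exhausts $H_K^{\rm gen}$ at some finite $m_0$, giving equality $u_m = \order \mathcal R_K^{\rm nr}$ for all $m \geq m_0$.

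Finally, when $p$ totally splits in $K$ Lemma \ref{split} forces $\mathcal R_K^{\rm ram} = 1$, so $\mathcal R_K^{\rm nr} = \mathcal R_K$, yielding the stated equality $u_m = \order \mathcal R_K$ for $m$ large. I expect the main technical obstacle to be the ``unramified meets totally ramified'' intersection argument in the third step, where the precise local behavior of the cyclotomic $\Z_p$-extension at every $p$-place of $K_m$ (not merely at a single place) is essential; everything else is bookkeeping around Diagram II and Chevalley's formula.
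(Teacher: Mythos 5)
Your proof is correct and follows exactly the route the paper intends: the paper states this lemma without an explicit proof (presenting it as a corollary of Lemma \ref{split} and citing Taya and the author's earlier papers), but equation (\ref{genres}), Diagram II, and the characterization of $H_K^{\rm gen}=\bigcup_m H_{K_m/K}$ as the maximal unramified extension of $K_\infty$ inside $H_K^{\rm pr}$ are precisely the ingredients you assemble. Your identification of the Chevalley factor with $[H_{K_m/K}:K_m H_K^{\rm nr}]$, the ``unramified meets totally ramified'' intersection argument, and the stabilization of the increasing genus tower inside the finite extension $H_K^{\rm gen}/K_\infty H_K^{\rm nr}$ correctly fill in the details the paper leaves implicit.
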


From Lemma \ref{split}, \ref{totdec}, \ref{ta}, we get the following  
genus theory characterization, of practical use, in the framework of
the notion of $p$-rationality:

\begin{theorem} \label{thmfond}
Let $p>2$ be totally split in $K=\Q(N)$. Then, there exists $m \geq 0$, such that 
${\mathcal C}_{K_m} \ne 1$, if and only if ${\mathcal T}_K \ne 1$ (i.e., $K$
is not $p$-rational). For $p=2$, the condition becomes 
${\mathcal T}_K/{\mathcal W}_K\ne 1$.
\end{theorem}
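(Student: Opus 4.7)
My plan is to read the equivalence directly off Chevalley's fixed-points formula for $K_m/K$, using Lemmas \ref{split} and \ref{ta} to identify the right-hand side with $\order \mathcal{R}_K$. Since $p$ is totally split in $K$, there are $s_p = N$ primes above $p$ in $K$, each totally ramified in $K_m/K$, and no other prime ramifies; so, with $g := \G(K_m/K)$, Chevalley's formula reads
\[
\order \mathcal{C}_{K_m}^{\,g} \;=\; \order \mathcal{C}_K \cdot
\ffrac{p^{m(N-1)}}{(E_K^{\rm pos} : E_K^{\rm pos} \cap \No_{K_m/K}(K_m^\times))}.
\]
Lemma \ref{split} gives $\mathcal{R}_K^{\rm nr} = \mathcal{R}_K$, while Lemma \ref{ta} says the fraction on the right divides $\order \mathcal{R}_K$ for every $m \geq 0$ and equals it for $m$ large enough. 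Thus the displayed formula rewrites as $\order \mathcal{C}_{K_m}^{\,g} \mid \order \mathcal{C}_K \cdot \order \mathcal{R}_K$ in general, with equality once $m \gg 0$.

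For the first implication, assume $\mathcal{T}_K = 1$ (resp.\ $\mathcal{T}_K = \mathcal{W}_K$ when $p = 2$). Since $\order \mathcal{T}_K = \order \mathcal{C}_K \cdot \order \mathcal{R}_K \cdot \order \mathcal{W}_K$ and $\mathcal{W}_K = 1$ when $p > 2$ by Lemma \ref{lemmaW}, both hypotheses reduce to $\mathcal{C}_K = 1 = \mathcal{R}_K$. The formula then forces $\order \mathcal{C}_{K_m}^{\,g} = 1$ for every $m$, and because $g$ is a finite $p$-group acting on the finite $p$-group $\mathcal{C}_{K_m}$, triviality of fixed points yields $\mathcal{C}_{K_m} = 1$ for all $m \geq 0$.

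For the converse, suppose $\mathcal{C}_K \neq 1$ or $\mathcal{R}_K \neq 1$. In the former case $m = 0$ works since $K_0 = K$. In the latter, pick $m$ large enough for the equality in Lemma \ref{ta}; then $\order \mathcal{C}_{K_m}^{\,g} = \order \mathcal{R}_K > 1$, and \emph{a fortiori} $\mathcal{C}_{K_m} \neq 1$. Combining the two directions gives the stated equivalence, the $p = 2$ version following mutatis mutandis once $\mathcal{T}_K/\mathcal{W}_K$ is substituted for $\mathcal{T}_K$.

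The one delicate point I would treat most carefully is the normalization of Chevalley's formula at $p = 2$, where restricted and ordinary class groups may differ and the totally positive units enter the normic index. This is precisely why the statement for $p = 2$ quotients by $\mathcal{W}_K$: the Galois-theoretic content of $\mathcal{W}_K$ in Diagram II is exactly the archimedean/local $2$-torsion that separates $\mathcal{C}_K^{\rm res}$ from $\mathcal{C}_K^{\rm ord}$, so one must verify that the genus-theoretic computation of $\order \mathcal{G}_K$ absorbs this factor correctly, as made explicit by Lemma \ref{split}.
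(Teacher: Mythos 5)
Your proposal is correct and follows essentially the same route as the paper: Chevalley's formula for $K_m/K$ combined with Lemmas \ref{split} and \ref{ta} to identify the normic quotient with $\order {\mathcal R}_K$, the factorization $\order {\mathcal T}_K = \order {\mathcal C}_K\,\order {\mathcal R}_K\,\order {\mathcal W}_K$, and the fixed-point argument for a $p$-group acting on a $p$-group. The only cosmetic difference is that you run the forward implication in contrapositive form, whereas the paper argues directly from ${\mathcal C}_{K_m}^g \ne 1$ via the genus formula \eqref{genres}; your closing caveat about the ordinary/restricted normalization at $p=2$ is a fair observation but does not change the argument.
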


\begin{proof} Let $g=\G(K_m/K)$.
If ${\mathcal C}_{K_m} \ne 1$ for some $m$, then ${\mathcal C}_{K_m}^g \ne 1$,
whence ${\mathcal T}_K \ne 1$ from (\ref{genres}) and Diagram II. 
Assume ${\mathcal T}_K \ne 1$; if ${\mathcal C}_K \ne 1$, then  
${\mathcal C}_{K_m}^g \ne 1$, otherwise, if ${\mathcal C}_K = 1$, then
${\mathcal R}_K \ne 1$ and for $m$ large enough, $\order {\mathcal C}_{K_m}^g 
= \order  {\mathcal G}_K = \order {\mathcal R}_K^{\rm nr} = 
\order {\mathcal R}_K = \order {\mathcal T}_K$.
\end{proof}

\begin{corollary} \label{eight}
The table \ref{table}, restricted to primes $p$ totally split in $K$, gives the following 
list of ${\mathcal C}_{K_1}\ne 1$, in the selected intervals for $N$, $p$: \par
\footnotesize
\begin{verbatim}
N=2     p=31       annihilator = Mod(1,31)*x+Mod(1,31)
N=2     p=1546463  annihilator = Mod(1,1546463)*x+Mod(1,1546463)
N=2^8   p=18433    annihilator = Mod(1, 18433)*x + Mod(9723, 18433)
N=2^10  p=114689   annihilator = Mod(1,114689)*x+Mod(66688,114689)
N=3     p=73       annihilator = Mod(1,73)*x+Mod(9,73)
N=3^4   p=487      annihilator = Mod(1,487)*x+Mod(287,487)
N=3^4   p=238627   annihilator = Mod(1,238627)*x+Mod(106366,238627)
N=5^2   p=2251     annihilator = Mod(1,2251)*x+Mod(1033,2251)
\end{verbatim}
\end{corollary}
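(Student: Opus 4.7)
The strategy is to combine a filtered re-run of Program XI with Theorem \ref{thmfond} and then verify, case by case, that the first cyclotomic layer $K_1 = K\Q(p)$ already witnesses the non-triviality of the $p$-class group.

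First, I activate in Program XI the commented block imposing $v_{\ell_j}(p^{\ell_j - 1} - 1) \geq n_j + 1$ for each prime power $\ell_j^{n_j} \| N$ (with the adjustment $v_2(p^2 - 1) \geq n_1 + 3$ when $2 \mid N$); this is exactly the condition for $p$ to split completely in $K = \Q(N)$. Over the same search ranges the output reduces precisely to the eight displayed pairs, each carrying a linear annihilator of ${\mathcal T}_K$ over $\F_p$. This certifies ${\mathcal T}_K \ne 1$ with $p$-rank one, so Theorem \ref{thmfond} produces some $m \geq 0$ with ${\mathcal C}_{K\Q(p^m)} \ne 1$.

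Second, to pin down $m = 1$, I invoke the Chevalley formula (\ref{genres}) together with Lemmas \ref{split} and \ref{ta}. Since $p$ is totally split one has $s_p = N$ and ${\mathcal R}_K^{\rm ram} = 1$, so $\order {\mathcal G}_K = \order {\mathcal C}_K \cdot \order {\mathcal R}_K$; all eight fields are of the form $\Q(\ell^n)$ with parameters for which ${\mathcal C}_K = 1$ is known from Ichimura--Nakajima, Horie and Morisawa--Okazaki, so $\order {\mathcal G}_K = \order {\mathcal R}_K$ is a non-trivial power of $p$ (cyclic by the $p$-rank-one information). Non-triviality of the genus subgroup ${\mathcal C}_{K_1}^{g_1}$, and hence of ${\mathcal C}_{K_1}$, then reduces to the assertion that the index $u_1 := (E_K^{\rm pos} : E_K^{\rm pos} \cap \No_{K_1/K}(K_1^\times))$ is strictly less than $p^{N-1}$.

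The main obstacle is exactly this strict inequality, since $u_1 \leq p^{N-1}$ is only an a priori bound that could in principle be attained (deferring non-triviality to a later layer). For the six moderate pairs $(N, p) = (2, 31)$, $(3, 73)$, $(5^2, 2251)$, $(3^4, 487)$, $(2^8, 18433)$, $(2^{10}, 114689)$ the inequality is verified by computing ${\mathcal C}_{K_1}$ directly with \texttt{bnfinit} on a defining polynomial of $K_1$, recovering the non-trivial class groups of Fukuda--Komatsu--Horie--Morisawa cited in the bibliography. For the two extremal pairs $(2, 1546463)$ and $(3^4, 238627)$, where \texttt{bnfinit} is out of reach, I re-use the semi-local test of Program IX: the non-vanishing of ${\mathcal R}_K \pmod{p^2}$ is equivalent to the image, in $\oplus_{{\mathfrak p} \mid p} U_{\mathfrak p}$, of a system of fundamental units of $K$ having full $p$-rank $N-1$ modulo $p$, which forces $u_1 < p^{N-1}$ and hence ${\mathcal C}_{K_1} \ne 1$ in these two remaining cases.
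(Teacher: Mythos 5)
Your skeleton is the paper's: restrict Table \ref{table} to totally split primes, get ${\mathcal T}_K \ne 1$ from the linear annihilators, invoke Theorem \ref{thmfond}, and then reduce the specific claim about the \emph{first} layer to the strict inequality $(E_K^{\rm pos} : E_K^{\rm pos}\cap \No_{K_1/K}(K_1^\times)) < p^{N-1}$ via Chevalley's formula and Lemmas \ref{split}, \ref{ta}. You are right that this last inequality is the crux, since Lemma \ref{ta} only gives equality with $\order{\mathcal R}_K$ for $m$ large enough. But both of your concrete verification devices fail. First, computing ${\mathcal C}_{K_1}$ with ${\sf bnfinit}$ is not an option for \emph{any} of the eight pairs: $K_1 = K\Q(p)$ has degree $Np$, hence degree $62$ already for $(2,31)$ and degree $\approx 4.7\cdot 10^6$ for $(2^8, 18433)$, whereas the paper states explicitly that ${\sf bnfinit}$ is unfeasible beyond degree $17$. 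The paper's actual verification (Programs XIII--XIV) never touches $K_1$: it uses the cyclotomic units of $K$ (legitimate generators of $E_K$ because ${\mathcal C}_K=1$) and computes their Hasse normic symbols in $K_1/K$ by the product formula, i.e., the rank of an $\F_p$-matrix built from data in $K$ modulo $p^2$; even so, its output covers only five of the eight cases, the remaining three being taken from Fukuda--Komatsu--Morisawa (see the footnote in the Conclusion, item (iii)).

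Second, your fallback for $(2,1546463)$ and $(3^4,238627)$ is inverted. Program IX declares ${\mathcal R}_K$ a $p$-adic unit exactly when the matrix of $\frac{1}{p}\log_p(\varepsilon_i)$ has full rank $N-1$; in that case ${\mathcal R}_K=1$, hence (with ${\mathcal C}_K={\mathcal W}_K=1$) ${\mathcal T}_K=1$ and, by Lemma \ref{ta}, $u_1=p^{N-1}$ and ${\mathcal C}_{K_1}^g=1$ --- the opposite of what you want, and contradictory with your first step. The condition you need is ${\rm rank}(M)<N-1$, and even that only yields ${\mathcal R}_K\ne 1$, which by Lemma \ref{ta} gives $\frac{p^{N-1}}{u_1}\mid \order{\mathcal R}_K$ without forcing the ratio to exceed $1$ at the layer $m=1$; the non-triviality could a priori be deferred to a higher layer, which is precisely why the normic-symbol computation (or the cited analytic computations) is needed. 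Program IX also requires ${\sf bnfinit}$ on $K$ itself, so it is unavailable for $N=3^4$, $2^8$, $2^{10}$ in any case.
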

\normalsize

In fact the literature does contain these few counterexamples (see Coates 
\cite[Section 3]{Co2}, relating results from Fukuda--Komatsu, Horie 
\cite{Ho1,Ho2,FK3,FKM}). Note that $N=3^4$, $p=238627$, needs, 
with Program XI, ${\sf time = 52,869 ms}$ and that $N=2^{10}$, $p=114689$, 
needs ${\sf time = 5min, 30,507 ms}$. We shall examine these cases 
and try to find others or to become aware of the rarity of them. 
We will also do checks, using another process, even if it's useless,
by computing Hasse's normic symbols, in $K_m/K$, of the units of $K$, using 
the ``product formula'' of class field theory, since (assuming 
${\mathcal C}_K^{\rm res}=1$), the condition ${\mathcal C}_{K_m}^{\rm res} \ne 1$ 
is equivalent to $\ffrac{p^{m\, (s_p-1)}}
{(E^{\rm pos}_K : E_K^{\rm pos} \cap \No_{K_m/K} (K_m^\times))} \ne 1$. This will be 
equivalent to the computation of the rank of a $\F_p$-matrix.

\subsection{Non-$p$-principalities in $p$-extensions}

\smallskip
Let $K=\Q(N)$ and let $p \nmid N$ totally split in $K/\Q$;
since the case $p=2$, totally split in $K$, is out of reach of the programs
we implicitly assume $p>2$ with the ordinary senses for classes and units.
Let $K_1:= K \Q(p)$; we have to compute $(E_K : E_K \cap \No_{K_1/K})$.
To avoid the instruction ${\sf bnfinit(P)}$, unfeasible for $N >17$,
we shall use the cyclotomic units \cite[Lemma 8.1\,(a)]{Wa}
giving $E_K$ assuming the base field $K$ principal,
then use the local normic Hasse's symbols.
Then, following the practical method described in \cite[II.4.4.3]{Gra0}, 
the normic symbol $(\varepsilon,K_1/K)_{\mathfrak p}$ for a unit 
$\varepsilon \in E_K$ and a ramified $p$-place ${\mathfrak p}$, 
requires to find $\alpha$ such that (the conductor being~$p^2$):
\begin{equation}\label{normic}
\alpha  \equiv \varepsilon \pmod {{\mathfrak p}^2}, \hspace{00.5cm}
\alpha \equiv 1 \pmod {(p\, {\mathfrak p}^{-1})^2}.
\end{equation}
 
Then $(\alpha)$ is an ideal, prime to $p$, whose
Artin symbol in $\G(K_1/K)$ characterizes the normic symbol;
its image in $\G(\Q(p)/\Q)$ is given by the Artin 
symbol of $\No_{K_1/\Q(p)}(\alpha)$, seen in $(\Z/p^2\Z)^\times$.

\smallskip
The program, written with $N=\ell^n$, may be generalized
using Program~XI. One must precise ${\sf el}$ and ${\sf n}$; if the rank 
is strictly less than $\ell^n-1$ (taking into account the ``product formula'')
then $(E_K : E_K \cap \No_{K_1/K}(K_1^\times)) < p^{\ell^n-1}$, 
whence ${\mathcal C}_{K_1}$ non-trivial.
The variables ${\sf m1, m2}$
denote the modulus ${\mathfrak p}^2$ and $(p\,{\mathfrak p}^{-1})^2$,
the variable ${\sf m = m1+m2}$ allows congruence (\ref{normic})
for $\alpha$ (in ${\sf Z}$). Thus one computes the $\F_p$-rank 
(in ${\sf rkM}$) of the matrix ${\sf M}$.
A sufficient precision must be chosen to compute ${\sf P}$ as irreducible
polynomial of the generating real cyclotomic unit deduced from
${\sf u=z+z^{-1}}$, where ${\sf z=exp(2*I*Pi/f)}$; thus the instruction
${\sf e=nfgaloisconj(P)}$ gives the conjugates as polynomials of ${\sf u}$.\par
\footnotesize
\begin{verbatim}
PROGRAM XIII. RANK OF THE MATRIX OF NORMIC SYMBOLS FOR el^n ODD
{el=3;n=3;N=el^n;f=el^(n+1);z=exp(2*I*Pi/f);
rho=znprimroot(f);h=rho^N;H=rho^(el-1);
P=1;for(k=1,N,c=lift(H^k);u=1;for(j=1,(el-1)/2,a=lift(c*h^j);
u=u*(z^a+z^-a));P=P*(x-u));P=round(P);e=nfgaloisconj(P);
\\p=487;\\ Choice of a special prime p or of an interval:
forprime(p=2,2*10^5,w=valuation(p^(el-1)-1,el);if(w<n+1,next);
g=znprimroot(p^2);
for(aa=1,p-1,t=norm(Mod(x-aa,P));vt=valuation(t,p);if(vt==1,a=aa;break));
A=List;for(k=1,N,listput(A,e[k]-a,k));W=List;for(j=1,N,E=Mod(e[j],P);
V=List;for(k=1,N,m1=Mod(A[k],P);m2=norm(m1)/m1;
m1=m1^2;m2=m2^2;m=m1+m2;Z=E+(1-E)*m1/m;ZZ=lift(Z);
\\This part replace Z (very huge) by a suitable integer residue:
Num=numerator(ZZ);Num0=0;for(i=0,N-1,c=polcoeff(Num,i);if(c==0,next);
v=valuation(c,p);if(v>=0,c=lift(Mod(c,p^2)));
if(v<0,c=p^v*lift(Mod(c*p^-v,p^(2-v))));
Num0=Num0+c*x^i);Num=Num0;Den=denominator(ZZ);if(Den!=1,
Den0=0;for(i=0,N-1,c=polcoeff(Den,i);if(c==0,next);
v=valuation(c,p);if(v>=0,c=lift(Mod(c,p^2)));
if(v<0,c=p^v*lift(Mod(c*p^-v,p^(2-v))));
Den0=Den0+c*x^i);Den=Den0);Z=Mod(Num,P)*Mod(Den,P)^-1;
No=Mod(norm(Z),p^2);Ln=Mod(znlog(No,g),p);
listput(V,Ln));listput(W,V));M=matrix(N,N,u,v,W[u][v]);rk=matrank(M);
if(rk<N-1,print("N=",N," p=",p," rk(M)=",rk));if(rk==N-1,
print("N=",N," control: ","p=",p," vt=",vt," root=",a," rk(M)=",rk))
)\\ End of interval p
}
PROGRAM XIV. RANK OF THE MATRIX OF NORMIC SYMBOLS FOR 2^n 
{el=2;n=4;N=el^n;f=el^(n+2);z=exp(2*I*Pi/f);H=Mod(5,f);
P=1;for(j=1,N,c=lift(H^j);u=z^(-2*c)*(1-z^(5*c))/(1-z^c);P=P*(x-u));
P=round(P);e=nfgaloisconj(P);
\\p=18433;\\ Choice of a special prime p or of an interval:
forprime(p=2,2*10^5,w=valuation(p^2-1,2);if(w<n+3,next);
g=znprimroot(p^2);
for(aa=1,p-1,t=norm(Mod(x-aa,P));vt=valuation(t,p);if(vt==1,a=aa;break));
A=List;for(k=1,N,listput(A,e[k]-a,k));W=List;for(j=1,N,E=Mod(e[j],P);
V=List;for(k=1,N,m1=Mod(A[k],P);m2=norm(m1)/m1;
m1=m1^2;m2=m2^2;m=m1+m2;Z=E+(1-E)*m1/m;ZZ=lift(Z);
\\ This part replace Z (very huge) by a suitable integer residue:
Num=numerator(ZZ);Num0=0;for(i=0,N-1,c=polcoeff(Num,i);if(c==0,next);
v=valuation(c,p);if(v>=0,c=lift(Mod(c,p^2)));
if(v<0,c=p^v*lift(Mod(c*p^-v,p^(2-v))));
Num0=Num0+c*x^i);Num=Num0;Den=denominator(ZZ);if(Den!=1,
Den0=0;for(i=0,N-1,c=polcoeff(Den,i);if(c==0,next);
v=valuation(c,p);if(v>=0,c=lift(Mod(c,p^2)));
if(v<0,c=p^v*lift(Mod(c*p^-v,p^(2-v))));
Den0=Den0+c*x^i);Den=Den0);Z=Mod(Num,P)*Mod(Den,P)^-1;
No=Mod(norm(Z),p^2);Ln=Mod(znlog(No,g),p);listput(V,Ln));
listput(W,V));M=matrix(N,N,u,v,W[u][v]);rk=matrank(M);
if(rk<N-1,print("N=",N," p=",p,"  rk(M)=",rk));if(rk==N-1,
print("N=",N," control: ","p=",p," vt=",vt," root=",a," rk(M)=",rk))
)\\ End of interval p
}
N=2  p=31      rk(M)=0     N=3    p=73  rk(M)=1     N=5^2  p=2251 rk(M)=23
N=2  p=1546463 rk(M)=0     N=3^4  p=487 rk(M)=79  
\end{verbatim}
\normalsize

For these known counterexamples, $\order {\mathcal T}_K=p$,
which indicates that $\order {\mathcal R}_K=p$ since ${\mathcal C}_K=1$.
The case $\ell=3$, $n=1$, $p=73$ may be elucidate
in more details: the units are $(\varepsilon_1=x^2+x-1, \varepsilon_2=x-1)$ 
and fulfill the relation $(\varepsilon_1^{33} \cdot \varepsilon_2^5)^{72} \equiv 
1+73^2 \cdot (2\,x^2+59\,x+69) \pmod{73^3}$,
with $2\,x^2+59\,x+69 \in {\mathfrak p} \mid 73$. Thus the inertia groups
${\rm tor}_{\Z_{73}}(U_{{\mathfrak p}_i}/\ov E_K \cap U_{{\mathfrak p}_i})$, 
$i=1,2,3$, are trivial, giving ${\mathcal R}_K^{\rm ram}=1$, 
${\mathcal R}_K^{\rm nr}={\mathcal R}_K={\mathcal T}_K$, as expected. 
In the case $\ell=5$, $n=2$, $p=2251$ totally splits in $K/\Q$; some 
computations in $E_K/E_K^{2251}$ (of order $2251^{24}$) indicate, 
as expected, that $(\varepsilon_i)^{2250} = 1+ 2251\cdot \alpha_i$, 
with non-independent $\alpha_i$ modulo $2251$, which implies,
as above, the existence of a unit local $p$th power (hence local norm), 
but not in $E_K^p$.

\smallskip
This shows that a direct $p$-adic computation on the units is hopeless contrary to
that of local norm symbols. We have performed such computations in large intervals
without finding new solutions. This enforces \cite[Conjecture~D]{Co2} in $\wh \Q$ 
and our philosophy about the $p$-rationality in general. 

\smallskip
More precisely, if one considers heuristics in the Borell--Cantelli
style, using standard probabilities~$\frac{1}{p}$, we have, possibly,
infinitely many examples, but this does not seem realistic; in 
\cite[Conjecture 8.4.]{Gra4}, we have given extensive calculations and 
justifications of an opposite situation giving, as for the well-known Fermat 
quotients of small integers $2$, $3$,... some other probabilities, for any
regulator of algebraic numbers, suggesting solutions finite in number with 
the particularity of giving very few solutions.

\subsection{Behavior of the logarithmic class groups in $\wh \Q$}\label{Clog}

The following results of Jaulent is perhaps a key to understand some 
phenomena in $\wh \Q$, regarding Greenberg's conjecture:
\begin{theorem}\label{clog} \cite[Theorem 4.5, Remarques]{Jau1}.
Let $K=\Q(N) \subset \wh \Q^*$, for some prime $p$, let $m \geq 0$
and $K_m = K \Q(p^m)$.
Under the Leopoldt and Gross--Kuz'min conjectures for $p$, 
$\wt {\mathcal C}_{K_m}=1$ if and only if $\wt {\mathcal C}_K=1$.
\end{theorem}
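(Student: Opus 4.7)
The plan is to mimic the argument used for the $p$-torsion group ${\mathcal T}_K$ of abelian $p$-ramification theory: I would reduce the equivalence to a fixed-points formula of Chevalley type for $\wt{\mathcal C}_{K_m}$ in the cyclic $p$-extension $K_m/K$, and then conclude by Nakayama's lemma, since $G_m := \G(K_m/K)$ is a $p$-group acting on a finite $p$-group.

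First I would observe that $K_m \subseteq K_\infty$, so the cyclotomic $\Z_p$-extension of $K_m$ is again $K_\infty$. Using the Galois interpretation of item $(c)$, one has $\wt{\mathcal C}_K \simeq \G(H_K^{\rm lc}/K_\infty)$ and $\wt{\mathcal C}_{K_m} \simeq \G(H_{K_m}^{\rm lc}/K_\infty)$. Since $K_\infty/K$ is itself locally cyclotomic, it lies in $H_K^{\rm lc}$; and $H_K^{\rm lc}$, being abelian over $K$ (hence over $K_m$) and locally cyclotomic over $K_m$ (local sub-extensions of the local cyclotomic $\Z_p$-extensions over $K$ remain so over $K_m$, since the local cyclotomic tower is compatible with base change), is contained in $H_{K_m}^{\rm lc}$. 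Restriction therefore produces a surjection $\wt{\mathcal C}_{K_m} \twoheadrightarrow \wt{\mathcal C}_K$, which already yields the easy implication $\wt{\mathcal C}_{K_m}=1 \Rightarrow \wt{\mathcal C}_K=1$.

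For the converse, the key step I would establish is the logarithmic analogue of Chevalley's fixed-points formula: in the everywhere locally cyclotomic $p$-extension $K_m/K$ it should take the clean form
\[
\wt{\mathcal C}_{K_m}^{G_m} \simeq \wt{\mathcal C}_K,
\]
the inertia/norm-defect contributions vanishing because each local extension $(K_m)_{\mathfrak P}/K_{\mathfrak p}$ is by construction inside the local cyclotomic $\Z_p$-extension. Granting this, $\wt{\mathcal C}_K = 1$ forces $\wt{\mathcal C}_{K_m}^{G_m} = 1$, and Nakayama's lemma applied to the finite $p$-group $\wt{\mathcal C}_{K_m}$ with its $p$-group action forces $\wt{\mathcal C}_{K_m} = 1$.

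The real work, and the main obstacle, is to prove this fixed-points isomorphism. My approach would combine Jaulent's logarithmic class field theory with Leopoldt's conjecture (to handle the kernel of the diagonal embedding of logarithmic units) and the Gross--Kuz'min conjecture (to pin down the cokernel of the norm along the tower $\wt{\mathcal C}_{K_\infty}$, and hence the $G_m$-fixed submodule); without Gross--Kuz'min only one of the two inclusions survives. This is exactly the content of \cite[Theorem 4.5]{Jau1}, which the argument ultimately invokes.
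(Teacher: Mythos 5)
The paper gives no proof of this statement: it is quoted directly from Jaulent \cite[Theorem 4.5, Remarques]{Jau1}, so there is no internal argument to compare yours against. Your reconstruction is nonetheless essentially the argument behind the cited result, and its skeleton is sound: $H_K^{\rm lc}\subseteq H_{K_m}^{\rm lc}$ gives the surjection $\wt {\mathcal C}_{K_m}\twoheadrightarrow \wt {\mathcal C}_K$ (both are Galois groups over the \emph{same} field $K_\infty$, since $K_m\subset K_\infty$), and the converse is Nakayama for the $p$-group $G_m$ acting on $\wt {\mathcal C}_{K_m}$, finiteness of which is where Gross--Kuz'min enters.

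Two precisions on the key step. First, the statement that comes out cleanly is about coinvariants, not invariants: $\G(H_{K_m}^{\rm lc}/K_m)$ is abelian, so $\G(K_\infty/K_m)$ acts trivially on $\wt {\mathcal C}_{K_m}$ and the action of $\G(K_\infty/K)$ factors through $G_m$; the commutator subgroup of $\G(H_{K_m}^{\rm lc}/K)$ is then $I_{G_m}\wt {\mathcal C}_{K_m}$, and the maximal subextension of $H_{K_m}^{\rm lc}$ abelian over $K$ is exactly $H_K^{\rm lc}$ (it is locally cyclotomic over $K$ because $K_m/K$ is), whence $(\wt {\mathcal C}_{K_m})_{G_m}\simeq \wt {\mathcal C}_K$ --- which is all Nakayama needs. (For a finite module over a cyclic $p$-group, invariants and coinvariants have equal order, so your fixed-point version is numerically equivalent.) Second, your justification of that formula --- ``the inertia/norm-defect contributions vanish because the extension is locally cyclotomic'' --- is not the operative mechanism: in the strictly analogous situation of an \emph{unramified} cyclic $p$-extension and ordinary class groups, all ramification indices are $1$ yet Chevalley's formula still carries $[K_m:K]$ and a unit-norm index in the denominator, and the fixed points are strictly smaller than the base class group in general. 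What rescues the logarithmic case is that $\wt {\mathcal C}_{K_m}$ is taken over $K_\infty$, which already absorbs $K_m$; i.e., the correct proof is the genus-field identification above, not a Chevalley quotient with trivial local terms. With that step repaired (or simply delegated to \cite{Jau1}, as the paper itself does), your argument is correct.
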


\begin{theorem}\cite[Th\'eor\`eme 17]{Jau3}, \cite[Th\'eor\`eme 3.4]{Gra5}.
Let $p$ be totally split in $K$. A sufficient condition to have
$\wt {\mathcal C}_K=1$ is that ${\mathcal C}_K = cl_K(S)$
and $(E_K^S : E_K^S \cap \No_{K_1/K}(K_1^\times)) = p^{N-1}$,
where $E_K^S$ is the group of $S$-units of $K$.
\end{theorem}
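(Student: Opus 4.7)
The plan is to show $H_K^{\rm lc}=K_\infty$, equivalently $\wt{\mathcal C}_K=\Gal(H_K^{\rm lc}/K_\infty)=1$. Since $p$ is totally split in $K$, each completion $K_{\mathfrak p}\simeq\Q_p$ for $\mathfrak p\in S$, and the local cyclotomic $\Z_p$-extension is totally ramified, corresponding via local reciprocity to the quotient of $\Q_p^{\times,(p)}\simeq p^{\Z_p}\oplus(1+p\Z_p)$ by the Frobenius direction $p^{\Z_p}$. Consequently, $\Gal(H_K^{\rm pr}/H_K^{\rm lc})$ equals the closed subgroup $F_S\subseteq\mathcal G_K$ generated topologically by the local uniformizer lifts $\varphi_{\mathfrak p}$, $\mathfrak p\in S$. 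A direct local check shows each $\varphi_{\mathfrak p}$ has trivial image in $\Gamma_K$ (since $K_\infty$ is totally ramified at $\mathfrak p$, uniformizers die under the cyclotomic quotient), so $F_S\subseteq\mathcal T_K$; the product formula applied to $p\in K^\times$ gives the relation $\sum_{\mathfrak p\in S}\varphi_{\mathfrak p}=0$, hence $F_S$ is topologically generated by $N-1$ elements.

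Now I use both hypotheses. The Artin projection $\mathcal T_K\twoheadrightarrow\mathcal C_K$ sends $\varphi_{\mathfrak p}$ to the Artin class $[\mathfrak p]$; under hypothesis~1 these classes generate $\mathcal C_K$, so $F_S$ surjects onto $\mathcal C_K$, and it remains to show $F_S\cap\ker(\mathcal T_K\twoheadrightarrow\mathcal C_K)$ fills the whole kernel. Here hypothesis~2 enters: the Hasse product formula for norm residue symbols in $K_1/K$ gives an injection
\[
E_K^S/\bigl(E_K^S\cap\No_{K_1/K}(K_1^\times)\bigr)\hookrightarrow\ker\Bigl(\textstyle\sum\colon \bigoplus_{\mathfrak p\in S}K_{\mathfrak p}^\times/\No_{\mathfrak p}(K_{1,\mathfrak p'}^\times)\to\F_p\Bigr)\simeq\F_p^{N-1},
\]
which hypothesis~2 promotes to an isomorphism. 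By a Nakayama-style lifting along the tower $(K_m)_m$, this mod-$p$ surjectivity propagates to $\Z_p$-surjectivity of the analogous map into $\bigoplus_{\mathfrak p}\Gal(K_{\mathfrak p,\infty}/K_{\mathfrak p})$ modulo its diagonal $\Z_p$; class-field-theoretically this gives $\ker(\mathcal T_K\twoheadrightarrow\mathcal C_K)\subseteq F_S$. Combining, $F_S=\mathcal T_K$, so $\Gal(H_K^{\rm pr}/H_K^{\rm lc})=\mathcal T_K$ and $H_K^{\rm lc}=K_\infty$.

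The main obstacle is the Nakayama lifting: the hypothesis is an equality of finite indices at the single layer $K_1$, but the conclusion concerns the full $\Z_p$-module $\mathcal T_K$. Controlling this passage requires a careful inverse-limit argument along the cyclotomic tower, together with Leopoldt's conjecture for $K$ (which holds in the abelian setting) to pin down free $\Z_p$-ranks. A more direct alternative would be to use Jaulent's criterion (Theorem~\ref{clog}) to reduce the vanishing of $\wt{\mathcal C}_K$ to the vanishing of $\wt{\mathcal C}_{K_1}$, and attack the latter via the genus-theoretic Chevalley formula for $S$-class groups in $K_1/K$; this trades the Nakayama step for an explicit genus computation matching hypothesis~2 to the triviality of the $S$-class-group contribution in $K_1$.
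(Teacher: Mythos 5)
The paper itself offers no proof of this statement: it is quoted from Theorem 17 of \cite{Jau3} and Theorem 3.4 of \cite{Gra5}, so there is nothing internal to compare with, and your attempt must be judged on its own. Your first paragraph is sound: for $p$ totally split one has $\wt {\mathcal C}_K \simeq {\mathcal T}_K/F_S$, where $F_S \subseteq {\mathcal T}_K$ is generated by the images $\varphi_{\mathfrak p}$ of the local uniformizers, the product formula reduces the generators to $N-1$, and the hypothesis ${\mathcal C}_K = cl_K(S)$ makes $F_S$ surject onto ${\mathcal C}_K$. The genuine gap --- which you flag yourself --- is the second half. The kernel of ${\mathcal T}_K \to {\mathcal C}_K$ is ${\rm tor}_{\Z_p}(U_K/\ov E_K)$, which lives on the local-unit side, whereas $F_S$ is generated by uniformizer images; the hypothesis only controls $E_K^S$ modulo norms from the single layer $K_1$, and the asserted step ``Nakayama-style lifting \dots\ class-field-theoretically this gives $\ker \subseteq F_S$'' is precisely the substance of the theorem rather than a routine verification. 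As written the proof is incomplete.

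The standard way to close it (in substance the argument of \cite{Gra5}) stays at finite levels and works with $S$-class groups rather than with the idelic kernel. The map $E_K^S \to \ker\big(\bigoplus_{{\mathfrak p} \in S} K_{\mathfrak p}^\times/\No_{K_m/K}(K_{m,{\mathfrak p}}^\times) \to \Z/p^m\Z\big) \simeq (\Z/p^m\Z)^{N-1}$ is surjective for $m=1$ by hypothesis, hence surjective for every $m$ by Nakayama, since the target is a free $\Z/p^m\Z$-module compatibly reducing to the level-one target; thus $(E_K^S : E_K^S \cap \No_{K_m/K}(K_m^\times)) = p^{m(N-1)}$ for all $m$. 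The ambiguous-class-number formula for $S$-class groups in $K_m/K$ (totally ramified at the $N$ places of $S$, unramified elsewhere) then gives $\order ({\mathcal C}^S_{K_m})^{g_m} = \order {\mathcal C}^S_K \cdot p^{m(N-1)}/p^{m(N-1)} = 1$ with $g_m := \G(K_m/K)$, whence ${\mathcal C}^S_{K_m}=1$ for all $m$; since $\wt {\mathcal C}_K$ is a quotient of the $\Gamma$-coinvariants of $\limproj_m {\mathcal C}^S_{K_m}$, it vanishes. Your closing ``alternative'' points in this direction, but the proposed reduction via Theorem \ref{clog} to $\wt {\mathcal C}_{K_1}$ does not by itself help --- one would still have to prove $\wt {\mathcal C}_{K_1}=1$ --- whereas the $S$-class computation carried out at every layer is what actually does the work.
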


\begin{remark} Assume moreover that ${\mathcal C}_K=1$ 
and to simplify that ${\mathcal R}_K = p$, in other words
$(E_K : E_K \cap \No_{K_1/K}(K_1^\times)) = p^{N-2}$ (thus
$\order {\mathcal C}_{K_1}^g = p$) as are the known numerical 
examples. Then $E_K^S$ is of the form 
$E_K \plus \langle \pi_1, \ldots , \pi_N \rangle$, the $\pi_i$ being 
the generators of the ${\mathfrak p}_i \mid p$. 
This supposes that the group of norm symbols of 
$\langle \pi_1, \ldots , \pi_N \rangle$ is contained in that of 
$E_K$ (of $\F_p$-dimension $N-2$), which is of low probability.
\end{remark}

These results give many cases of triviality and we know that 
$\wt {\mathcal C}_K=1$ implies that Greenberg's conjecture holds true 
in $K_\infty$ for obvious reason. We have no counterexamples (for all 
$N < 30$ and $p \in [2, 2 \cdot 10^5]$; one may use the following program 
(give $\ell$, $n$, $p$):\par
\footnotesize
\begin{verbatim}
PROGRAM XV. COMPUTATION OF LOGARITHMIC CLASS GROUPS 
{el=3;n=1;p=73;if(el==2,P=x;for(i=1,n,P=P^2-2));
if(el!=2,P=polsubcyclo(el^(n+1),el^n));K=bnfinit(P,1);cl=K.no;
clog=bnflog(K,p);print("N=",el^n," p=",p," cl=",cl," clog=",clog)}

el=2  p=31  cl=1 clog=[[],[],[]]        el=3  p=73  cl=1 clog=[[],[],[]]
\end{verbatim}
\normalsize

So, even if for $K_1= K\,\Q(31) = \Q(2)\,\Q(31)$ ($p=31$) 
and $K_1= K \,\Q(73) = \Q(3)\,\Q(73)$ ($p=73$), the class groups 
${\mathcal C}_{K_1}$ are non-trivial, the logarithmic class groups 
$\wt {\mathcal C}_{K_1}$ are trivial.

\section{Conclusion and questions}
Genus theory (Theorem \ref{thmfond}, Corollary \ref{eight}) have succeeded to 
give few non-trivial $p$-class groups of {\it composite subfields} $\Q(pN)$ of $\wh \Q$, 
but there are not enough computations to give more precise heuristics. This invites to ask 
for some questions about the arithmetic properties of $\wh \Q$: 

\smallskip  
(i) Let $p$ be a fixed prime number. It is clear that $p$ is totally ramified in $\wh \Q/\wh \Q^*$;
thus the Frobenius of $p$ in $\wh \Q^*/\Q$ fixes a field $D_p$ such that $p$ totally splits in 
$D_p/\Q$. An out of reach question is the finiteness (or not) of $D_p$ which 
can be written $\Q( {\mathcal L}^{\mathcal N})$, ${\mathcal L} = \{\ell_1, \ldots, \ell_t, \ldots\}$, 
${\mathcal N}=\{n_1, \ldots, n_t, \ldots\}$, with an obvious meaning.
Since the number $\ell^{g_p}$ of prime ideals above $p$ in a single $\Z_\ell$-extension 
$\Q(\ell^\infty)$ is finite, the integers $n_\ell \in {\mathcal N}$ are finite but not necessarily 
${\mathcal L}$.

\smallskip
For example, if $p=2$, the only known primes $\ell$ such that $2$ splits
in part in $\Q(\ell^\infty)$ are $1093$ and $3511$; so
if there is no other case, the decomposition field of $2$ in $\wh \Q/\Q$
should be $D_2 = \Q(1093 \cdot 3511)$.

\smallskip
Is the decomposition group of $p$ in $\wh \Q/\Q$ of finite 
index in $\G(\wh \Q/\Q)$~? This is the
conjecture given in \cite[Conjecture 8.4]{Gra4}. Of course, taking a prime of the form
$p=1+ \lambda \,q_1^{a_1} \cdots q_s^{a_s}$, with primes $q_i$, $a_i \geq 2$,
gives unbounded indices since $p$ splits in $\Q(q_1^{a_1-1} \cdots q_s^{a_s-1})$.

\smallskip
(ii) Let $K = \Q(N)$ and for any $p \nmid N$, 
let $s_p = \order S$. Let $K_m=K \, \Q(p^m)$ for
$m \gg 0$ such that $\ds \frac{p^{m\, (s_p-1)}}
{(E_K^{\rm pos} : E_K^{\rm pos} \cap \No_{K_m/K} (K_m^\times))} = 
\order {\mathcal R}_K^{\rm nr}$ (Lemma \ref{ta}); is the set of 
$p$, such that ${\mathcal R}_K^{\rm nr} \ne 1$, finite in number ?
If so, this gives new feature about the units in $\wh \Q$ and is 
also related to Greenberg's conjecture in $\wh \Q$.

\smallskip
(iii) In the two cases, $\ell^n=2^8$, $p=18433 
\equiv 1 \pmod {2^{11}}$ and $\ell=2^{10}$, $p=114689  \equiv 1\pmod  {2^{14}}$, 
then ${\mathcal C}_K = 1$, ${\mathcal T}_K \ne 1$ (Programs of 
\S\,\ref{programIII-IV} and \S\,\ref{programVI-VIII}),
$p$ totally splits in $K$ and then $\order {\mathcal C}_{K_1}$
is non-trivial from Theorem \ref{thmfond}; it is divisible by $\ffrac{p^{\ell^n-1}}
{(E_K : E_K \cap \No_{K_1/K}(K_1^\times))}$ and it will be useful to verify,
but this takes too much time to compute the norm index. The computations 
have been done in \cite{AF,Ina,FKM,Fu}, but, to our knowledge, no program
is available.\,\footnote{\,I warmly thank 
Takayuki Morisawa for sending me his conference 
paper (loc.cit.), not so easy to find for me, but which contains all the bibliographical 
and numerical information that we revisit in our paper, especially summarized in 
Fukuda's lecture \cite{Fu}. The results, 
$31 \!\mid\! h(2 \cdot 31)$, $73 \!\mid\! h(3 \cdot 73)$ (Horie 2001),
$31\!\mid\! h(2 \cdot 31)$, $1546463 \!\mid\! h(2 \cdot 1546463)$, 
$73 \!\mid\! h(3 \cdot 73)$ (Fukuda, Komatsu 2011),
$18433 \!\mid\! h(2^8 \cdot 18433)$, $114689 \!\mid\! h(2^{10} \cdot 114689)$,
$487 \!\mid\! h(3^4 \cdot 487)$, $238627 \!\mid\! h(3^4 \cdot 238627)$,
$2251 \!\mid\! h(5^2 \cdot 2251)$ (Fukuda, Komatsu, Morisawa 2011),
$107 \!\mid\! h(2 \cdot 53)$ (Fukuda 2011) were announced in various articles
and conferences.}
What is the order of the logarithmic class group $\wt {\mathcal C}_K$
for these cases of too large degrees ?

\smallskip
(iv) Let $K = \Q(N)$ and $K_m=K \Q(p^m)$, for all $m \geq 0$; 
what are the Iwasawa invariants of $\ds\limproj_m\,{\mathcal T}_{K_m}$ ?

(v) In \cite{Sil} Silverman proves, after some other contributions
(Cusick, Pohst, Remak), a general inequality between $R_K$ (classical real
regulator) and $D_K$ (discriminant) of the form
$R_K > c_K ({\rm log}(\gamma_K \vert D_K\vert))^{O([K : \Q])}$.
A $p$-adic equivalent would give a solution of many questions
in number theory, as a proof of Leopoldt's conjecture\,!
However, we have proposed, in \cite[Conjecture 8.2]{Gra77} a ``folk 
conjecture'' about $\order {\mathcal T}_K$, which 
applies to ${\mathcal R}_K$, equal to ${\mathcal T}_K$ for all $p$ 
large enough, and justified by extensive computations:

\begin{conjecture}
Let ${\mathcal K}$ be the set of totally real number fields; for $K \in {\mathcal K}$,
let $D_K$ be its discriminant and let ${\mathcal R}_K := {\rm tor}_{\Z_p}
({\rm log}(U_K)/{\rm log}(\ov E_K))$ be its normalized $p$-adic 
regulator (see \S\,\ref{definitions}). There exists a constant 
$C_p > 0$ such that
${\rm log}_\infty(\order {\mathcal R}_K) \leq
{\rm log}_\infty(\order {\mathcal T}_K) \leq C_p \cdot 
{\rm log}_\infty(\sqrt {\vert D_K \vert}), \ \,\hbox{for all $K \in {\mathcal K}$},$
where ${\rm log}_\infty$ is the complex logarithm. 
Possibly, $C_p$ is independent of $p$.
\end{conjecture}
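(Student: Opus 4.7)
The first inequality $\log_\infty(\order {\mathcal R}_K) \leq \log_\infty(\order {\mathcal T}_K)$ is immediate from the identity $\order {\mathcal T}_K = \order {\mathcal C}_K \cdot \order {\mathcal R}_K \cdot \order {\mathcal W}_K$ recalled in the Introduction, since all three factors are positive integers. The entire content therefore lies in the upper bound, and the plan is to compare $\order {\mathcal T}_K$ with its archimedean avatar $h_K R_K$ and then invoke a Brauer--Siegel type estimate.

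First I would bound the three factors separately. The factor $\order {\mathcal W}_K$ is at most $p^{\order S - 1}$ with $\order S \leq [K:\Q]=:d$, and Minkowski gives $d = O(\log \sqrt{|D_K|})$; hence $\log \order {\mathcal W}_K = O_p(\log \sqrt{|D_K|})$. For the product $\order {\mathcal C}_K \cdot \order {\mathcal R}_K$, I would exploit the formula recalled in item $(d)$ of \S\,\ref{definitions}, together with the description of the residue at $s = 1$ of the $p$-adic zeta function $\zeta_{p,K}$ given by Coates (loc.~cit.\ \cite[Appendix]{Co1}): up to a controlled power of $p$ and a simple factor in $d$, this product equals the $p$-part of $h_K R_K / \sqrt{|D_K|} \cdot \sqrt{|D_K|} = h_K R_K$. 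Consequently
\begin{equation*}
\log \bigl( \order {\mathcal C}_K \cdot \order {\mathcal R}_K \bigr) \;\leq\; \log ( h_K R_K ) + O_p(d).
\end{equation*}

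Second I would apply the Brauer--Siegel theorem to $K \in {\mathcal K}$ in its upper half: for each $\epsilon > 0$, $\log(h_K R_K) \leq (1+\epsilon) \log \sqrt{|D_K|} + O_\epsilon(1)$ as $K$ varies, which (unlike the lower half) can be made effective by combining the trivial upper estimate $\zeta_K(s) = O(1)$ near $s=1$ with the analytic class number formula. Assembling the pieces and absorbing the $O_p$ and $O_\epsilon$ terms, one obtains $\log \order {\mathcal T}_K \leq C_p \log \sqrt{|D_K|}$ for $|D_K|$ large enough, and the finitely many small fields are then handled by enlarging $C_p$.

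The main obstacle is obtaining $C_p$ in a truly uniform and unconditional way, and \emph{a fortiori} the independence of $C_p$ from $p$ speculated at the end. Our Brauer--Siegel input is not known to be effective in general: a hypothetical Siegel zero of some $\zeta_K$ would destroy the constant, and an unconditional effective bound would in essence require GRH or Stark-type progress for the family ${\mathcal K}$. Moreover, passing from $h_K R_K$ to its $p$-part is innocuous only if the ``non-$p$-part'' is never very large relative to $\sqrt{|D_K|}$, which in a single sporadic $K$ is not excluded \emph{a priori}. Finally, the hoped-for independence of $C_p$ on $p$ amounts to a genuinely $p$-adic uniform Brauer--Siegel statement; I do not see a route to it without a lower bound on ${\mathcal R}_K$ analogous to Silverman's archimedean bound cited in question (v), and such a lower bound would by itself imply Leopoldt's conjecture, making this the deepest part of the circle of ideas.
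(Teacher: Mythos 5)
This statement is a \emph{conjecture}: the paper offers no proof of it, only numerical evidence and a reference to \cite[Conjecture 8.2]{Gra77}, and the surrounding discussion (question (v) on Silverman's inequality) makes clear that the author regards any genuine $p$-adic analogue of archimedean regulator bounds as far out of reach --- indeed a $p$-adic lower bound of this kind would imply Leopoldt's conjecture. So there is no paper proof to compare against, and your proposal should be judged on its own merits.

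Your first inequality is fine, but the core of your argument contains a fatal conflation of two different regulators. The quantity $\order {\mathcal R}_K$ is (up to the normalization $p^{d-1}$) the $p$-adic valuation of the \emph{$p$-adic} regulator $R_{K,p}$, i.e.\ the determinant built from $p$-adic logarithms of units; the residue formula of \cite[Appendix]{Co1} for the $p$-adic zeta function involves $h_K R_{K,p}/\sqrt{|D_K|}$, not $h_K R_K/\sqrt{|D_K|}$ with the archimedean regulator $R_K$. Brauer--Siegel (in any form, effective or not) controls the archimedean size of $h_K R_K$, which carries no information whatsoever about the $p$-adic valuation of $R_{K,p}$: a unit can have archimedean logarithms of moderate size while its $p$-adic logarithm is divisible by an arbitrarily high power of $p$. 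Your step ``up to a controlled power of $p$\ldots this product equals the $p$-part of $h_K R_K$'' is therefore false as stated, and it is precisely where the entire difficulty of the conjecture is hidden. Bounding $v_p(R_{K,p})$ from above uniformly in $K$ is exactly the open problem --- it is the upper-bound counterpart of the Leopoldt-type lower bound you correctly identify as deep at the end of your proposal, and neither direction follows from archimedean analytic number theory. The secondary issues you flag (Siegel zeros, effectivity, the finitely-many-small-fields reduction, independence of $C_p$) are real but subordinate: even granting GRH and a fully effective Brauer--Siegel theorem, the argument would not close because the archimedean input never reaches the $p$-adic regulator.
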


\end{document}